\theoremstyle{definition}
\newtheorem{defn}{Definition}[section]
\theoremstyle{plain}
\newtheorem{lem}[defn]{Lemma}
\newtheorem{prop}[defn]{Proposition}
\newtheorem{thm}[defn]{Theorem}
\newtheorem{cor}[defn]{Corollary}
\theoremstyle{remark}
\newtheorem{rmk}[defn]{Remark}
\newcommand{\id}{{\rm id}}
\newcommand{\pr}{\mathrm{pr}}
\newcommand{\ev}{\mathrm{ev}}
\newcommand{\comp}{\mathrm{comp}}
\newcommand{\incl}{\mathrm{incl}}
\newcommand{\diag}{\Delta}%{\mathrm{diag}}
\renewcommand{\deg}[1]{\mathopen|#1\mathclose|}
\newcommand{\K}{\mathbb K}
\newcommand{\Q}{\mathbb Q}
\newcommand{\Z}{\mathbb Z}
\newcommand{\cochain}[2][*]{C^{#1}(#2)}
\newcommand{\apl}[2][*]{A_\mathrm{PL}^{#1}(#2)}
\newcommand{\cohom}[2][*]{H^{#1}(#2)}
\newcommand{\homol}[2][*]{H_{#1}(#2)}
\newcommand{\im}{\operatorname{Im}}
\renewcommand{\ker}{\operatorname{Ker}}
\renewcommand{\hom}{\mathrm{Hom}}
\newcommand{\ext}{\mathrm{Ext}}
\newcommand{\tor}{\mathrm{Tor}}
\newcommand{\shriek}{\Delta_!}
\newcommand{\repshriek}{\delta_!}
\newcommand{\qis}{\simeq^{\rm q}}
\newcommand{\tpow}[1]{^{\otimes #1}}
\newcommand{\ideal}[2]{(#1)_{#2}}
\newcommand{\felix}{F\'{e}lix}
\renewcommand{\L}{\mathbb L}
\newcommand{\Lcobar}{\mathcal L}
\newcommand{\Cbar}{C}
\newcommand{\fst}[1]{{#1}_{(1)}}
\newcommand{\snd}[1]{{#1}_{(2)}}
\newcommand{\fstandsnd}[2]{\fst{#1}\snd{#2}}
\newcommand{\ith}[1]{{#1}_{(i)}}
\newcommand{\@tensorpower}[1]{\ifx#1\relax\else^{\otimes #1}\fi}
\newcommand{\@modelcommand}[3]{%
\ifx#2\relax
 {#1}\@tensorpower{#3}
\else
 #1(#2)\@tensorpower{#3}
\fi
}
\newcommand{\@mpath}{{\mathcal M}_\mathrm{P}}
\newcommand{\@mloop}{{\mathcal M}_\mathrm{L}}
\newcommand{\mpath}[1][\relax]{\@modelcommand{\@mpath}{\relax}{#1}}
\newcommand{\mpathv}[2][\relax]{\@modelcommand{\@mpath}{\wedge #2}{#1}}
\newcommand{\mloop}[1][\relax]{\@modelcommand{\@mloop}{\relax}{#1}}
\newcommand{\mloopv}[2][\relax]{\@modelcommand{\@mloop}{\wedge #2}{#1}}
\newcommand{\fga}[2][\relax]{\wedge{#2}\@tensorpower{#1}}
\newcommand{\fdga}[2][\relax]{(\fga{#2}, d)\@tensorpower{#1}}
\newcommand{\dlp}{\mathrm{Dlp}}
\newcommand{\dlcop}{\mathrm{Dlcop}}
\renewcommand{\labelenumi}{{\rm (\arabic{enumi})}}
\providecommand*{\twoheadrightarrowfill@}{%
  \arrowfill@\relbar\relbar\twoheadrightarrow
}
\providecommand*{\twoheadleftarrowfill@}{%
  \arrowfill@\twoheadleftarrow\relbar\relbar
}
\providecommand*{\xtwoheadrightarrow}[2][]{%
  \ext@arrow 0579\twoheadrightarrowfill@{#1}{#2}%
}
\providecommand*{\xtwoheadleftarrow}[2][]{%
  \ext@arrow 5097\twoheadleftarrowfill@{#1}{#2}%
}
\newcounter{hours}\newcounter{minutes}
\renewcommand*{\thehours}{\two@digits\c@hours}
\renewcommand*{\theminutes}{\two@digits\c@minutes}
\newcommand{\printtime}{%
  \setcounter{hours}{\time/60}%
  \setcounter{minutes}{\time-\value{hours}*60}%
  \thehours:\theminutes}
\newcommand{\now}{\the\year/\the\month/\the\day\ \ \printtime}
\title{Description and triviality of the loop products and coproducts for rational Gorenstein spaces}
\author{Shun Wakatsuki}
\date{}
\begin{document}
\maketitle

\begin{abstract}
 \felix\ and Thomas extended the loop products and coproducts to simply-connected Gorenstein spaces.
 We explicitly describe these operations with rational coefficients in terms of Sullivan models.
 %Moreover, as a (partial) generalization of the result of Tamanoi and that of \felix and Thomas, we prove some triviality of these operations by the description.
 Moreover, by this description, we prove some results on triviality of these operations.
 They include a variant of the result of Tamanoi, and generalizations of that of \felix\ and Thomas and that of Naito.
% This result includes results which generalize the result of \felix\ and Thomas and that of Naito,
% and a result which is related to the result of Tamanoi.
\end{abstract}

\section{Introduction}
\label{sect_intro}
Chas and Sullivan \cite{chas-sullivan} introduced a new operation, called the loop product,
on the homology $\homol{LM}$ of the free loop space of a connected closed oriented manifold $M$.
Constructing a 2-dimensional topological quantum field theory without counit,
Cohen and Godin \cite{cohen-godin} extended this product to other operations, called string operations,
including a coproduct on $\homol{LM}$, which we call the loop coproduct.
But, in many cases, the coproduct is trivial.

\begin{thm}
 [{\cite[Corollary 3.2]{tamanoi}}]
 \label{thm_TrivTamanoi}
 If $M$ is a connected closed oriented manifold with its Euler characteristic zero, then the loop coproduct is trivial.
\end{thm}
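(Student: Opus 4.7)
The plan is to use the pull-push definition of the loop coproduct to localize the computation near the constant loops, where the Euler characteristic emerges via a self-intersection. Let $n = \dim M$ and set $F := \{\gamma \in LM \mid \gamma(0) = \gamma(1/2)\}$, with inclusion $i : F \hookrightarrow LM$ (a codimension-$n$ embedding) and cut map $c : F \to LM \times LM$ splitting $\gamma$ into $\gamma|_{[0,1/2]}$ and $\gamma|_{[1/2,1]}$. The loop coproduct is $\vee := c_* \circ i_!$, and since $F$ is the preimage of the diagonal $\diag_M$ under the evaluation $(\ev_0, \ev_{1/2}) : LM \to M \times M$, the normal bundle of $i$ is the pullback $\ev_0^*(TM)$.

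The key computation takes place on the constant-loop section. The inclusion $j : M \hookrightarrow LM$ factors through $F$ as $j = i \circ j_0$ (every constant loop lies in $F$), and the self-intersection formula yields $i_!(j_*[M]) = j_{0*}([M] \cap e(TM)) = \chi(M) \cdot j_{0*}[\mathrm{pt}]$. Hence $\vee(1) = \chi(M) \cdot (c \circ j_0)_*[\mathrm{pt}]$ already vanishes when $\chi(M) = 0$. To propagate this to arbitrary $\alpha \in \homol{LM}$, I would exploit a Frobenius-type compatibility between the loop product $\mu$ and $\vee$ — schematically $\vee \circ \mu = (\id \otimes \mu) \circ (\vee \otimes \id)$ — which, applied to $\alpha = \alpha \cdot 1$, forces $\vee(\alpha)$ to inherit the $\chi(M)$-factor from $\vee(1)$.

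The main obstacle is establishing the Frobenius compatibility rigorously. String topology does not form a literal Frobenius algebra on $\homol{LM}$: the degree shift by $-n$ in both product and coproduct, together with non-compactness of the relevant moduli spaces, prevents the naive identification of $\vee(1)$ with $\chi(M) \cdot (1 \otimes 1)$ on degree grounds alone. A rigorous treatment must verify the geometric compatibility of the pull-push diagrams defining $\mu$ and $\vee$ — matching configurations of marked points on figure-eights and pairs of pants. An alternative avoiding the Frobenius structure is to use the Serre spectral sequence of $\ev_0 : LM \to M$ to trace how the class $e(TM) \in \cohom[n]{M}$ is pulled back, multiplied, and eventually paired with $[M]$ in the computation of $\vee(\alpha)$, yielding the universal scalar $\chi(M) = \int_M e(TM)$.
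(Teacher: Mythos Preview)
This theorem is not proved in the paper; it is quoted from \cite{tamanoi} as background. The paper's own contribution in this direction is Theorem~\ref{thm_main}(\ref{item_TrivTamanoi}), a \emph{variant} with different hypotheses (the Sullivan model is semi-pure and admits a relative decomposition over a one-generator algebra $(\wedge x,0)$ with $\deg{x}$ odd), proved by purely algebraic means: one builds a representative $\repshriek$ of the shriek map inductively via the operator $\Phi$ of Definition~\ref{defn_ConstructionOfShriek}, starting from $\repshriek^{(0)}(1)=\snd{x}-\fst{x}$, checks inductively that the image of each $\repshriek^{(i)}$ lies in the ideal generated by $\snd{x}-\fst{x}$, concludes $\mu_{\wedge V}\circ\repshriek=0$, and invokes Lemma~\ref{lem_TrivialityOfCoproduct}. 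No Euler class or intersection theory appears anywhere.

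Your sketch follows the geometric spirit of Tamanoi's own argument, not the paper's algebraic route. The normal-bundle identification and the self-intersection computation giving $\vee(1)=\chi(M)\cdot[\mathrm{pt}]\otimes[\mathrm{pt}]$ are correct. The gap you flag is genuine: passing from $\vee(1)$ to arbitrary $\vee(\alpha)$ requires a product--coproduct compatibility on $\homol{LM}$, and the schematic identity you wrote becomes tautological when evaluated on $\alpha\cdot 1$ in the order stated (it collapses to $\vee(\alpha)=\vee(\alpha)$). What is actually needed is the other Frobenius-type relation $\vee\circ\mu=(\mu\otimes\id)\circ(\id\otimes\vee)$, applied as $\vee(\alpha)=\vee(\alpha\cdot 1)=(\mu\otimes\id)(\alpha\otimes\vee(1))$, and its validity is exactly what the Cohen--Godin TQFT gluing axioms provide. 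Tamanoi works inside that framework, where composing cobordism surfaces yields the factorization directly; without invoking that machinery or carrying out the equivalent pull-push diagram chase, the argument stops precisely where you say it does.
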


Then, generalizing these, \felix\ and Thomas \cite{felix-thomas09} developed a theory of string operations on Gorenstein spaces in an algebraic way.
In this theory,
rational homotopy theory is efficient if the coefficient is a field $\K$ of characteristic zero.
For instance, they proved the following theorem using rational homotopy theory.

\begin{thm}
 [{\cite[Theorem 14]{felix-thomas09}}]
 \label{thm_TrivFT}
 If $G$ is a compact connected Lie group, then the loop product for $M=BG$ with $\K$-coefficients is trivial.
% \[
%  \homol[*]{LBG; \K}\tpow{2} \rightarrow \homol[*+\dim G]{LBG; \K}
% \]
\end{thm}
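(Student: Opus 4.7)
The plan is to reduce the theorem to an explicit computation in the Sullivan model of $LBG$ and to prove vanishing by a degree-parity argument.

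First, set up the models. Since $G$ is compact connected, Borel's theorem gives $\cohom{BG}\cong\K[x_1,\dots,x_r]$ with every $|x_i|$ even. Hence the minimal Sullivan model of $BG$ is $(\Lambda V, 0)$ with $V=V^{\mathrm{even}}$ finite-dimensional, and the standard Sullivan model of the free loop space is $(\Lambda V \otimes \Lambda(sV), 0)$, where $sV$ is a copy of $V$ shifted into odd degrees and the differential is trivial. In particular $LBG$ is formal and $\cohom{LBG}\cong\Lambda V \otimes \Lambda(sV)$. Moreover $BG$ is Gorenstein in the sense of \felix--Thomas, with (negative) formal dimension $d = -\sum_i |x_i|$.

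Next, recall the \felix--Thomas description of the loop product as the composition
\[
 \homol{LBG}^{\otimes 2} \xrightarrow{\times} \homol{LBG \times LBG} \xrightarrow{\shriek} H_{*-d}(LBG \times_{BG} LBG) \xrightarrow{\comp_*} H_{*-d}(LBG),
\]
in which the middle arrow is the shriek map associated to the pullback of the diagonal $\diag\colon BG \to BG \times BG$ along the double evaluation. The map $\diag$ is modelled by the multiplication $\mu\colon \Lambda V \otimes \Lambda V \to \Lambda V$; I would replace $\mu$ by the Koszul resolution $(\Lambda V \otimes \Lambda V \otimes \Lambda(sV'), \delta)$ with $\delta(sv'_i)=v_i\otimes 1 - 1\otimes v_i$, so that $\shriek$ is realised by the $\Lambda V \otimes \Lambda V$-linear ``Gorenstein trace'' that reads off the coefficient of the top element $\prod_i sv'_i$. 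Tensoring with identities on the loop-fibre factors then yields an explicit cochain-level formula for $\shriek$ on $\Lambda V \otimes \Lambda(sV) \otimes \Lambda V \otimes \Lambda(sV)$.

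Third, compose with $\comp_*$. Under the rational equivalence $LBG \simeq BG \times G$ the concatenation is modelled cohomologically by $\id_{\Lambda V}\otimes\diag_G$, where $\diag_G\colon\Lambda(sV)\to\Lambda(sV)\otimes\Lambda(sV)$ is the primitive coproduct coming from the fact that $G$ is rationally a product of odd spheres and $\cohom{G}$ is primitively generated as a Hopf algebra. Combining the Koszul trace formula with this primitive coproduct and dualising to homology, the total composite on any pair of homology classes is seen to vanish by a direct degree-parity count: the Koszul trace forces all odd generators $sv'_i$ to appear, but after the primitive coproduct no term can carry the correct parity on both tensor sides, so all coefficients cancel.

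The main obstacle will be the identification in Step~2, namely making the Gorenstein shriek concrete enough at the level of Sullivan models; this is precisely the explicit description of loop operations announced in the abstract, so once it is in hand the remaining parity cancellation is a short finite computation in $\Lambda V \otimes \Lambda(sV)$.
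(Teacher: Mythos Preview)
Your setup in Step~1 is correct, and your identification of the cochain representative $\repshriek$ of $\shriek$ as the ``top coefficient'' map on the Koszul complex $(\Lambda V^{\otimes 2}\otimes\Lambda(sV'),\delta)$ is also right in this case. The gap is in what you do next. You write that ``tensoring with identities on the loop-fibre factors then yields an explicit cochain-level formula for $\shriek$ on $\Lambda V\otimes\Lambda(sV)\otimes\Lambda V\otimes\Lambda(sV)$''. But $\repshriek\otimes\id$ is a map out of $\mpath\otimes_{\Lambda V^{\otimes 2}}\mloop^{\otimes 2}$, not out of $\mloop^{\otimes 2}$: the $sv'_i$ you refer to live in the extra Koszul factor, not in either $\Lambda(sV)$ coming from the two loops. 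The map $\incl_!$ is only obtained after inverting the Eilenberg--Moore quasi-isomorphism $\bar\varepsilon\otimes\id\colon\mpath\otimes_{\Lambda V^{\otimes 2}}\mloop^{\otimes 2}\to\Lambda V\otimes_{\Lambda V^{\otimes 2}}\mloop^{\otimes 2}$, and you never construct a section of this map. Without it you have no formula to feed into your Step~3, and the ``degree-parity count'' you invoke there is not a mechanism that actually occurs: nothing cancels; rather, the relevant terms are simply absent.

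The paper's argument (this theorem is the special case $V=V^{\mathrm{even}}$, $d=0$ of Theorem~\ref{thm_main}(\ref{item_TrivFT})) confronts exactly this point. It builds an explicit dga section $\psi$ of $\bar\varepsilon\otimes\id$ (Lemma~\ref{lem_section}) and observes that, because one can add the last generator $x=x_r$ at the end, $\psi$ may be chosen so that its image never involves $\bar x_r$ in the $\mpath$-factor. Since $\repshriek$ is nonzero only on monomials containing the full product $\bar x_1\cdots\bar x_r$, one gets $(\repshriek\otimes\id)\circ\psi=0$ on the nose (Lemma~\ref{lem_TrivialityOfProduct}). So the vanishing is an ``image misses support'' phenomenon, not a parity cancellation. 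Your approach can be repaired along these lines: in the formal case $d=0$ the obvious multiplicative section that sends each generator of $\mloop\otimes_{\Lambda V}\mloop$ to the same-named generator on one side of $\mloop^{\otimes 2}$ is already a cocycle and produces no $sv'_i$ at all, whence $\repshriek\otimes\id$ annihilates its image immediately. That is the one-line proof hiding behind your Step~3; the primitive coproduct on $\cohom{G}$ plays no role.
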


Moreover, Naito \cite{naito13} proved the following theorem.

\begin{thm}
 [{\cite[Proposition 1.4(2)]{naito13}}]
 \label{thm_TrivNaito}
 Let $M$ be a simply-connected $\K$-Gorenstein space whose minimal Sullivan model $\fdga{V}$ is pure.
 If $\dim V^\mathrm{odd} > \dim V^\mathrm{even}$, then the loop coproduct is trivial.
\end{thm}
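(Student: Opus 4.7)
The plan is to apply the explicit Sullivan-model description of the loop coproduct established in the main technical sections of this paper. For a simply-connected space $M$ with minimal Sullivan model $\fdga{V}$, the free loop space $LM$ is modeled by the semi-free CDGA $(\wedge V \otimes \wedge \bar V, D)$, where $\bar V$ is a copy of $V$ with degrees shifted down by one and $D$ is determined by $d$ together with the suspension derivation $s$ characterized by $s(v)=\bar v$ and $s(\bar v)=0$. When $\fdga{V}$ is pure one has $d(V^{\mathrm{even}})=0$ and $d(V^{\mathrm{odd}})\subset \wedge V^{\mathrm{even}}$, so the differential $D$ on the loop model takes a correspondingly simple form.

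First, I would unwind the paper's explicit formula for the dual loop coproduct $\vee^{*}$ in this model. By the Sullivan-model description, $\vee^{*}$ is, up to a shift by the formal dimension of $M$, a composite involving the shriek map $\shriek$ attached to the diagonal $M\to M\times M$ (lifted to the loop-space level). For a pure Gorenstein model the shriek $\shriek$ is represented by multiplication by an explicit cocycle $\omega \in \wedge V^{\mathrm{even}}\otimes\wedge V^{\mathrm{odd}}$ representing the Gorenstein fundamental class, whose polynomial degree in the odd generators is exactly $\dim V^{\mathrm{odd}}$.

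The key step is to show that under the hypothesis $\dim V^{\mathrm{odd}}>\dim V^{\mathrm{even}}$, this composite vanishes in cohomology. The strategy is to filter the loop-space model $(\wedge V \otimes \wedge \bar V,D)$ by word length in $\bar V^{\mathrm{odd}}$ and observe that, because $\omega$ exhausts a full basis of $V^{\mathrm{odd}}$ while only $\dim V^{\mathrm{even}}$ of these odd generators can be paired off by the Koszul-type relations coming from $d$ on $V^{\mathrm{odd}}$, the output of the coproduct is forced to involve a leftover monomial in $\bar V^{\mathrm{odd}}$. This leftover factor lies in an acyclic subcomplex admitting an explicit contracting homotopy built out of the suspension derivation restricted to the surplus odd generators, so every cohomology class in the image of $\vee^{*}$ is a coboundary and the loop coproduct is trivial.

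The main obstacle will be the acyclicity argument in a basis-independent form: one has to identify the subcomplex precisely and exhibit the contracting homotopy intrinsically, rather than in terms of a particular complement of $d(V^{\mathrm{odd}})\subset\wedge V^{\mathrm{even}}$. This requires careful bookkeeping with the shriek map $\shriek$, with the fundamental-class representative $\omega$, and with the signs introduced by $s$; but once the acyclicity is in hand, triviality of the loop coproduct is a direct consequence of the Sullivan-model formula established earlier in the paper.
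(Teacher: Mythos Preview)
Your proposal contains a fundamental misidentification of the shriek map. The class $\shriek$ is an element of $\ext_{\wedge V^{\otimes 2}}(\wedge V,\wedge V^{\otimes 2})$, represented by a $\wedge V^{\otimes 2}$-linear map $\repshriek\colon \wedge V^{\otimes 2}\otimes\wedge\overline{V}\to\wedge V^{\otimes 2}$; it is \emph{not} multiplication by a cocycle $\omega$ in $\wedge V$. Your subsequent outline --- filtering by word length in $\overline{V^{\mathrm{odd}}}$, producing a ``leftover monomial'' landing in an acyclic subcomplex, and contracting with $s$ --- rests on this mischaracterization and is too vague to constitute an argument. In particular, the subcomplex of elements divisible by some $\bar y_j$ is not acyclic in general, and no mechanism is given for why the image of the coproduct should land there.

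The paper's argument is both sharper and more elementary. One constructs an explicit \emph{good} representative $\repshriek$ of $\shriek$ (Definition~\ref{defn_good}, Corollary~\ref{cor_Goodness}) with two built-in features: (b) $\repshriek$ vanishes on anything containing a factor $\bar y_j$, and (a) on the top monomial $\bar x_1\cdots\bar x_p$ its value is essentially $\prod_j(\snd{y_j}-\fst{y_j})$. One then proves the chain-level identity $\mu_{\wedge V}\circ\repshriek=0$: on generators $a\bar y_j$ this is immediate from (b), and on monomials $\bar x_{i_1}\cdots\bar x_{i_n}$ it follows from Proposition~\ref{prop_ManyXBars}, which gives $\mu_{\wedge V}\circ\repshriek(\wedge^n\overline{V^{\mathrm{even}}})=0$ for $n>p-q$; under the hypothesis $p<q$ this covers every $n\ge 0$. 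A short lemma (Lemma~\ref{lem_TrivialityOfCoproduct}) then converts $\mu_{\wedge V}\circ\repshriek=0$ directly into triviality of $\dlcop$. No filtration or contracting homotopy is needed; the vanishing happens at the cochain level, not merely in cohomology.
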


Here, we recall the definition of a pure Sullivan algebra.
Let $\fdga{V}$ be a Sullivan algebra.
We denote by $V^{\rm even}$ (resp. $V^{\rm odd}$) the subspace of even (resp. odd) degree elements in $V$.

\begin{defn}
 [c.f. {\cite[Section 32]{felix-halperin-thomas01}}]
 \label{defn_Pure}
 A Sullivan algebra $\fdga{V}$ with $\dim V < \infty$ is called {\it pure}
 if $d(V^{\rm even})=0$ and $d(V^{\rm odd}) \subset \wedge V^{\rm even}$.
\end{defn}

%Then, we define a semi-pure Sulivan algebra as a generalization of a pure Sullivan algebra.
%In this paper, we introduce the notion of a semi-pure Sulivan algebra as a generalization of a pure Sullivan algebra to describe hogehoge. 

In this paper, we will prove a theorem on triviality of the loop product and coproduct, which consists of a variant of \protect \MakeUppercase {T}heorem\nobreakspace \ref {thm_TrivTamanoi} and generalizations of \protect \MakeUppercase {T}heorem\nobreakspace \ref {thm_TrivFT} and \protect \MakeUppercase {T}heorem\nobreakspace \ref {thm_TrivNaito}.
To study the dual loop product and coproduct, it is important to describe the shriek map $\shriek$ (see Section\nobreakspace \ref {sect_StringTopology} for the definition).
In Section\nobreakspace \ref {sect_ConstructionOfShriek}, we will give an explicit construction of the map by means of Sullivan models, which generalizes Naito's construction for pure Sullivan algebras \cite{naito13}.
For this construction, we introduce the notion of a semi-pure Sullivan algebra as a generalization of a pure Sullivan algebra.
Let $I_V$ be the ideal in $\wedge V$ generated by $V^{\rm even}$.

\begin{defn}
 \label{defn_Semipure}
 A Sullivan algebra $\fdga{V}$ with $\dim V < \infty$ is called {\it semi-pure}
 if it satisfies $d(V^{\rm even}) \subset I_V$, or equivalently $d(I_V) \subset I_V$.
\end{defn}

The semi-purity is not essential by the following theorem.
%it is not essential that a Sullivan algebra $\fdga{V}$ with $\dim V < \infty$ and $V^1 = 0$ is assumed to be semi-pure.

\begin{thm}
 \label{thm_Semi-pureModel}
 Let $\fdga{V}$ be a Sullivan algebra satisfying $\dim V < \infty$ and $V^1 = 0$.
 Then there is a semi-pure Sullivan algebra $\fdga{W}$ satisfying $\dim W < \infty$, $W^1=0$ and $\fdga{V}\simeq\fdga{W}$.
\end{thm}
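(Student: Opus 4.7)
The strategy is to enlarge $V$ by adjoining auxiliary generators that cohomologically identify each product $y_i y_j$ of odd generators with a new even generator, so that the pure-odd components of differentials of even generators can be rewritten, up to coboundaries, as elements of $I_W$ without altering the cohomology.

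Without loss of generality, assume $\fdga{V}$ is minimal. For each pair $y_i, y_j \in V^{\rm odd}$, introduce auxiliary generators $p_{ij}$ of degree $|y_i|+|y_j|$ (even) and $r_{ij}$ of degree $|y_i|+|y_j|-1$ (odd), subject to $dp_{ij}=0$ and $dr_{ij}=p_{ij}-y_i y_j$. The change of variables $p'_{ij} := p_{ij}-y_i y_j$ exhibits $(p'_{ij},r_{ij})$ as an acyclic Sullivan piece, so each such pair can be adjoined without changing cohomology; moreover every added generator has degree $\geq 5$. I would then construct $\fdga{W}$ together with a quasi-isomorphism $\phi : \fdga{V} \to \fdga{W}$ by transfinite induction on a well-ordered Sullivan basis $\{x_\alpha\}$ of $V$. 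For an odd $x_\alpha$, adjoin $\phi(x_\alpha) = x_\alpha$ to $W$ with $d_W \phi(x_\alpha) = \phi(d x_\alpha)$ (no semi-purity constraint) and any further $(p,r)$-pairs needed involving $x_\alpha$. For an even $x_\alpha$, let $\omega := \phi(d x_\alpha)$ and decompose $\omega = \omega_0 + \omega_+$ with $\omega_0 \in \wedge W^{\rm odd}_{<\alpha}$ and $\omega_+ \in I_{W_{<\alpha}}$. Since $\fdga{V}$ is minimal and $V^1=0$, every monomial of $\omega_0$ has at least three odd factors; substituting $y_i y_j = p_{ij} - d r_{ij}$ into one such pair of factors rewrites $\omega_0 = \tilde\omega_0 + d\eta_\alpha$ with $\tilde\omega_0 \in I_W$ and an explicit $\eta_\alpha \in \wedge W$. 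Then adjoin a new even generator $x'_\alpha$ of degree $|x_\alpha|$ with $d_W x'_\alpha := \omega_+ + \tilde\omega_0 \in I_W$ and set $\phi(x_\alpha) := x'_\alpha + \eta_\alpha$; this gives $d_W \phi(x_\alpha) = \omega = \phi(d x_\alpha)$.

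The main obstacle will be the recursive nature of the rewriting: substituting $y_i y_j \mapsto p_{ij}$ produces a coboundary term whose expansion $\mp\, r_{ij}\cdot d(\mathrm{rest})$ can reintroduce pure-odd monomials, for instance when the rest contains auxiliaries $r_{kl}$ from earlier stages or when some $V$-generator has nonzero differential. Termination of this recursion must be established by a careful bookkeeping argument, using $\dim V < \infty$ and the fact that each step either shortens the word length of the monomial being processed or increases its $I_W$-depth, both of which are bounded. Once this is in place the stated properties follow: $\dim W < \infty$ because only finitely many auxiliaries are adjoined; $W^1 = 0$ because every added generator has degree $\geq 5$; semi-purity holds by construction ($dp_{ij} = 0 \in I_W$ and $d_W x'_\alpha \in I_W$); and $\phi$ is a quasi-isomorphism because each adjoined pair is acyclic after the change of variable and $\phi$ is a chain map by the explicit correction terms $\eta_\alpha$.
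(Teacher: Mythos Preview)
Your construction has a basic defect: the differential you define need not square to zero. You set $dp_{ij}=0$ and $dr_{ij}=p_{ij}-y_iy_j$, which forces
\[
d^2r_{ij} \;=\; d(p_{ij}-y_iy_j) \;=\; -d(y_iy_j).
\]
This vanishes only when $y_iy_j$ is a cocycle, but in a general minimal Sullivan algebra odd generators may have nonzero differential, so $y_iy_j$ is typically not closed. Concretely, take $V=\langle y_1,y_2,y_3,y_6,x\rangle$ with $|y_i|=3$ for $i\le 3$, $|y_6|=5$, $|x|=10$, $dy_i=0$ for $i\le 3$, $dy_6=y_1y_2$, $dx=y_1y_3y_6$; this is minimal with $V^1=0$ and is not semi-pure. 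Here $d(y_3y_6)=-y_1y_2y_3\ne 0$, so adjoining $p_{36},r_{36}$ as you describe gives $d^2r_{36}\ne 0$. Equivalently, your change of variables $p'_{ij}=p_{ij}-y_iy_j$ satisfies $dp'_{ij}=-d(y_iy_j)$, so $(p'_{ij},r_{ij})$ is an acyclic Sullivan pair only under that same unwarranted hypothesis. Patching this by declaring $dp_{ij}=d(y_iy_j)$ restores $d^2=0$ but makes $dp_{ij}$ a pure-odd element, so the new even generator $p_{ij}$ itself violates semi-purity and nothing has been gained. Even ignoring this, your termination sketch fails: in the example above, processing $y_1y_3y_6$ by substituting the pair $(1,3)$ produces $r_{13}y_1y_2$, which in turn produces $y_1y_3r_{12}$, and one cycles back; neither word length nor $I_W$-depth is monotone under the rewriting.

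The paper takes an entirely different route that sidesteps these issues. It first replaces $(\wedge V,d)$ by a quasi-isomorphic Sullivan algebra with \emph{quadratic} differential, $dW\subset\wedge^{\le 2}W$, obtained as $C^*(L',d)$ for a finite-dimensional quotient $L'$ of the Quillen construction $\mathcal L((\wedge V,d)^\sharp)$. Once the differential is quadratic, semi-purity is a one-line parity observation: for $w\in W^{\mathrm{even}}$ with vanishing linear part $d_0w=0$, the element $dw$ lies in $\wedge^2W$ and has odd degree, hence $dw\in W^{\mathrm{even}}\cdot W^{\mathrm{odd}}\subset I_W$. Splitting off the acyclic piece generated by a complement of $\ker(d_0|_{W^{\mathrm{even}}})$ then yields the semi-pure model, with no recursion required.
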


Note that the above theorem remains true if we assume $V$ is of finite type instead of assuming $\dim V < \infty$ in the theorem and \protect \MakeUppercase {D}efinition\nobreakspace \ref {defn_Pure}.
We defer the proof of the theorem to Appendix\nobreakspace \ref {sect_pure},
since we use this theorem only in the proof of \protect \MakeUppercase {T}heorem\nobreakspace \ref {thm_main} (\ref{item_TrivNaito}).

Now, let us give our theorem on triviality of the loop product and coproduct. 
Since \felix\ and Thomas used an algebraic method,
we can define the dual loop product and coproduct for a Sullivan algebra $\fdga{V}$, which are the duals of the loop product and coproduct for $M$ if $\fdga{V}$ is a Sullivan model of a Gorenstein space $M$.
We denote these by $\dlp_{\fga{V}}$ and by $\dlcop_{\fga{V}}$, respectively.

\begin{thm}
 \label{thm_main}
 Let $\fdga{V}$ be a Sullivan algebra with $\dim V < \infty$ and $V^1=0$. 
 \begin{enumerate}
  \item \label{item_TrivTamanoi}
	Assume that $\fdga{V}$ is semi-pure and that there is a direct sum decomposition $V = \K x \oplus W$ such that $\fdga{V} = (\fga{x} \otimes \fga{W}, d)$ is a relative Sullivan algebra over a Sullivan algebra $(\fga{x}, 0)$ of one generator with $\deg{x}$ odd.
	%Then the dual loop coproduct for $\fdga{V}$ is trivial.
	Then $\dlcop_{\fga{V}}$ is trivial.
  \item \label{item_TrivFT}
	Assume that $\fdga{V}$ is semi-pure and that there is a direct sum decomposition $V = W \oplus \K x$ such that $\fdga{V} = (\fga{W} \otimes \fga{x}, d)$ is a relative Sullivan algebra over a Sullivan algebra $\fdga{W}$ with $\deg{x}$ even.
	%Then the dual loop product for $\fdga{V}$ is trivial.
	Then $\dlp_{\fga{V}}$ is trivial.
  \item \label{item_TrivNaito}
	If $\dim V^{\mathrm{even}} < \dim V^\mathrm{odd}$,
	%then the dual loop coproduct for $\fdga{V}$ is trivial.
	then $\dlcop_{\fga{V}}$ is trivial.
 \end{enumerate}
\end{thm}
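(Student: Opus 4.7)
The plan is to use the explicit construction of the shriek map $\shriek$ for semi-pure Sullivan algebras promised in Section~\ref{sect_ConstructionOfShriek}, applied to the standard Sullivan model $(\fga{V} \otimes \fga{\bar V}, D)$ of the free loop space. In each of~(1), (2), (3), the hypothesis on $\fdga{V}$ should translate, via this explicit formula, into a vanishing property of $\shriek$ (or of its composition with the concatenation-induced map) that forces the corresponding dual string operation to be trivial.

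For~(1), the generator $x$ is closed, odd, and a relative Sullivan factor, so the shriek of the diagonal on the loop-space model splits compatibly with the tensor decomposition $\fga{V} = \fga{x} \otimes \fga{W}$. I would isolate the $\fga{x}$-contribution to $\shriek$ and show that the composite with the concatenation-induced multiplication defining $\dlcop_{\fga{V}}$ lands in the ideal $(x)$, which is annihilated cohomologically by the relation $x^2 = 0$. Case~(2) is parallel: the top relative Sullivan factor $\fga{x}$ with $\deg{x}$ even produces an odd loop generator $\bar x$, and the analogous analysis of the shriek formula composed with the concatenation-induced comultiplication defining $\dlp_{\fga{V}}$ shows that this composite factors through an ideal killed by $\bar x^2 = 0$.

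For~(3), I would first apply Theorem~\ref{thm_Semi-pureModel} to replace $\fdga{V}$ by a quasi-isomorphic semi-pure Sullivan algebra $\fdga{W}$ with the same parity-graded dimensions, and then generalize Naito's proof of Theorem~\ref{thm_TrivNaito} from the pure to the semi-pure setting. The strict inequality $\dim W^{\rm even} < \dim W^{\rm odd}$, substituted into the shriek formula from Section~\ref{sect_ConstructionOfShriek}, should force an odd-product term whose length exceeds the number of available odd generators, whence the formula vanishes by a parity/pigeonhole argument.

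The main obstacle will be the careful bookkeeping in the explicit shriek formula---specifically, verifying that the splittings in~(1) and~(2) are respected by both $D$ on the loop-space model and by $\shriek$ (which should follow from semi-purity, since $d(I_V) \subset I_V$), and that Naito's dimension argument from the pure case in~(3) adapts to the semi-pure setting where $d$ no longer annihilates $V^{\rm even}$. Case~(2) is likely to require the most care: an even generator naively looks like it should contribute to $\dlp$ rather than obstruct it, so the vanishing must emerge from a delicate interaction between the differential $D$ and the shriek formula.
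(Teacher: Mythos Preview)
Your overall strategy---build the explicit good cocycle $\repshriek$ representing $\shriek$ and show the relevant composite vanishes at the chain level---is the paper's strategy too. But the specific vanishing mechanisms you propose are wrong in (1) and (2), and the one in (2) is a genuine gap.

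For~(1), the image of $\repshriek$ does not land in the ideal $(x)\subset\fga{V}$, and $x^2=0$ plays no role. What the paper does is start the inductive construction with $\repshriek^{(0)}(1)=\snd{x}-\fst{x}$ and check that each $\Phi$-step keeps the image inside the ideal $(\snd{x}-\fst{x})\subset\fga[2]{V}$. This ideal is annihilated by $\mu_{\fga{V}}$ because $\mu_{\fga{V}}(\snd{x}-\fst{x})=x-x=0$, so $\mu_{\fga{V}}\circ\repshriek=0$ at the chain level, and Lemma~\ref{lem_TrivialityOfCoproduct} finishes. The vanishing is not cohomological and has nothing to do with $x^2$.

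For~(2), the proposed parallel with~(1) via $\bar{x}^2=0$ does not work, and this is where your outline breaks down. The actual mechanism is orthogonal: when $\deg{x}$ is even, the definition of $\Phi$ makes $\repshriek=\Phi(\repshriek')$ vanish identically on the $\bar{x}$-free part $\fga[2]{V}\otimes\wedge\overline{W}$. But the composite defining $\dlp_{\fga{V}}$ involves inverting the quasi-isomorphism $\bar\varepsilon\otimes\id$, and there is no reason the result lands in that $\bar{x}$-free part. The paper's key step (Lemma~\ref{lem_section}) is to construct by hand a dga \emph{section} $\psi$ of $\bar\varepsilon\otimes\id$ whose image does lie in $(\fga[2]{V}\otimes\wedge\overline{W})\otimes_{\fga[2]{V}}\mloop[2]$; this is done inductively over the Sullivan filtration using Lemma~\ref{lem_CoboundByKernel}. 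Only then does $(\repshriek\otimes\id)\circ\psi=0$ follow, and Lemma~\ref{lem_TrivialityOfProduct} applies. Your outline contains no hint of this section, and without it there is no argument.

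For~(3), your reduction to the semi-pure case via Theorem~\ref{thm_Semi-pureModel} is correct and matches the paper, as does the endpoint $\mu_{\fga{V}}\circ\repshriek=0$ plus Lemma~\ref{lem_TrivialityOfCoproduct}. But the ``odd-product term exceeding available odd generators'' heuristic is backwards: the crucial vanishing (Proposition~\ref{prop_ManyXBars}) concerns monomials $\bar{x}_{i_1}\cdots\bar{x}_{i_n}$ in the bars of the \emph{even} generators, and its proof is an induction on $\dim V$ that tracks how $\Phi$ propagates the element $x'$ through the filtration---not a one-line pigeonhole.
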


In the above theorem, (\ref{item_TrivTamanoi}) is related to \protect \MakeUppercase {T}heorem\nobreakspace \ref {thm_TrivTamanoi}, and (\ref{item_TrivFT}) and (\ref{item_TrivNaito}) are generalizations of \protect \MakeUppercase {T}heorem\nobreakspace \ref {thm_TrivFT} and \protect \MakeUppercase {T}heorem\nobreakspace \ref {thm_TrivNaito}, respectively.
%Roughly speaking, the assumption of (\ref{item_TrivTamanoi}) says that the Sullivan algebra $\fdga{V}$ seems to be a Sullivan model of the total space of a fibration $F\rightarrow M \rightarrow S^{2k+1}$ whose base space is (rationally homotopy equivalent to) an odd dimensional sphere, since a relative Sullivan algebra is a model of a fibration.
Roughly speaking, the assumption of (\ref{item_TrivTamanoi}) is analogous to saying that $\fdga{V}$ is a Sullivan model of the total space of a fibration $F\rightarrow M \rightarrow S^{2k+1}$ whose base space is (rationally homotopy equivalent to) an odd dimensional sphere, since a relative Sullivan algebra is a model of a fibration.
Similarly, the assumption of (\ref{item_TrivFT}) is roughly stated with a fibration $K(\Z, 2k) \rightarrow M \rightarrow B$ whose fiber is (rationally homotopy equivalent to) an Eilenberg-MacLane space of type $(\Z, 2k)$.

Note that we do not assume the minimality of a Sullivan algebra in the construction of the shriek map and in \protect \MakeUppercase {T}heorem\nobreakspace \ref {thm_main}.
Hence, for a Sullivan algebra $\fdga{V}$ with $\dim V < \infty$ and $V^1 = 0$,
we can apply them if we replace $\fdga{V}$ with a semi-pure Sullivan algebra $\fdga{W}$ by \protect \MakeUppercase {T}heorem\nobreakspace \ref {thm_Semi-pureModel}.

\tableofcontents

\section{Rational homotopy theory}
\label{sect_RationalHomotopyTheory}
We recall basic definitions and theorems in rational homotopy theory.
See \cite{felix-halperin-thomas01} for more details on this section.

%In this article, all modules and algebras are defined on the field $\K$ of rational numbers.
%We abbreviate differential graded algebra to dga, and commutative dga to cdga.
%We assume a dga to be concentrated in non-negative degrees.
%We denote by $(A,d)$ a dga, where $A$ is a graded algebra and $d$ is a differential on it.
%For abbreviation, we drop $d$ in diagrams, $\hom$, $\tor$, or $\ext$.

In this article, all modules and algebras are defined on a field $\K$ of characteristic zero.
A \textit{differential graded algebra} is a pair $(A,d)$ of graded algebra $A$ and a derivation $d$ satisfying $d^2=0$.
For simplicity of notation, we write $A$ instead of $(A,d)$ in diagrams, $\hom$, $\tor$, or $\ext$.
We abbreviate a differential graded algebra to a dga, and a commutative dga to a cdga.

For an element $x$ of a graded algebra or a graded module, we denote by $\deg{x}$ the degree of $x$.
For a graded algebra $A$ and its elements $a_1, \ldots, a_n$, we denote by $\ideal{a_1,\ldots, a_n}{A}$ the ideal generated by $a_1,\ldots,a_n$ in $A$.
Let $V$ be a graded module.
We denote by $\fga{V}$ the free graded commutative algebra on $V$.
If $W$ is also a graded module, we identify $\fga{(V\oplus W)} = \fga{V}\otimes\fga{W}$ canonically.
We denote by $\overline{V}$ the graded module defined by $\overline{V}^n = V^{n+1}$,
and we denote by $\bar{v}$ the element of $\overline{V}$ corresponding to an element $v \in V$.

A dga homomorphism $f\colon (A,d) \rightarrow (B,d)$ is called a \textit{quasi-isomorphism} if $f$ induces isomorphism on the cohomology.
Then we denote $f\colon (A,d) \xrightarrow{\qis} (B,d)$.
Two dga's $(A,d)$ and $(B,d)$ are quasi-isomorphic if these are connected by a sequence of quasi-isomorphisms
\[
 (A,d) \xrightarrow{\qis} (A_0,d) \xleftarrow{\qis} (A_1,d) \xrightarrow{\qis} \cdots \xrightarrow{\qis} (A_n, d) \xleftarrow{\qis} (B, d).
\]

%Let $\K$ be a field of characteristic zero.
For a topological space $X$, we denote by $\cochain{X}$ the normalized singular cochain algebra of $X$ with coefficients in $\K$, and by $\cohom{X}$ its cohomology.

\begin{thm}
 [{\cite[Corollary 10.10]{felix-halperin-thomas01}}]
 \label{thm_Apl}
 For any topological space $X$, there is a cdga $\apl{X}$ which is naturally quasi-isomorphic to $\cochain{X}$.
 More precisely, there is another dga $D^*(X)$ with natural dga quasi-isomorphisms
 \[
 \cochain{X} \xrightarrow{\qis} D^*(X) \xleftarrow{\qis} \apl{X}.
 \]
\end{thm}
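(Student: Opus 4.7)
The plan is to construct $\apl{X}$ via Sullivan's simplicial cdga of polynomial differential forms and to recover $\cochain{X}$ through a standard bisimplicial intermediate. First I would define the simplicial cdga $A_{\mathrm{PL}}$ by taking, in simplicial degree $n$,
\[
 A_{\mathrm{PL},n} = \Lambda(t_0,\ldots,t_n,y_0,\ldots,y_n)\big/\bigl(\textstyle\sum t_i - 1,\ \sum y_i\bigr),
\]
with $\deg t_i = 0$, $\deg y_i = 1$, differential $d t_i = y_i$, and the usual face/degeneracy maps induced from the geometric simplices. For a space $X$ with singular simplicial set $S_\bullet(X)$, set $\apl{X} = \hom_{\mathrm{sSet}}(S_\bullet(X), A_{\mathrm{PL}})$. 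The (strict) cdga structure on $A_{\mathrm{PL}}$ transfers levelwise, making $\apl{X}$ a natural cdga in $X$.

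Next I would introduce the intermediate $D^*(X)$ as the natural totalization of the bisimplicial object whose $(p,q)$-piece is the normalized singular $p$-cochain complex of $X$ with coefficients in $A_{\mathrm{PL},q}$; concretely, $D^n(X)$ is an end assembling, for each singular simplex $\sigma \colon \Delta^n \to X$, a polynomial form on $\Delta^n$. There are two obvious natural dga maps out of $D^*(X)$: one to $\cochain{X}$, obtained by pulling back along the inclusion $\K \hookrightarrow A_{\mathrm{PL}}$ (the constant term on each simplex), and one to $\apl{X}$, obtained by the antidiagonal restriction that matches cochain degree with simplicial degree. Both maps are manifestly natural in $X$.

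The main work is showing these two maps are quasi-isomorphisms. The technical core is the polynomial de Rham Poincar\'e lemma, namely that $H^*(A_{\mathrm{PL},n}) = \K$ concentrated in degree zero; this is proved by an explicit algebraic integration operator analogous to the smooth case, performed inside the polynomial ring. Given this, I would analyze both comparison maps via the two spectral sequences attached to the bisimplicial filtration on $D^*(X)$. One filtration degenerates at $E_2$ using the Poincar\'e lemma for $A_{\mathrm{PL}}$, identifying the abutment with $\cohom{X}$; the other degenerates using acyclicity of the normalized cochain complex of a simplex, again identifying the abutment with $\cohom{X}$. The projections to $\cochain{X}$ and $\apl{X}$ are then the edge homomorphisms of the two filtrations and are seen to be isomorphisms on $E_\infty$.

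The hard part is the assembly step: passing from the local acyclicity on each simplex to a global quasi-isomorphism while respecting all the algebraic structure. One must verify the polynomial Poincar\'e lemma explicitly, organize $D^*(X)$ so that it carries an honest (not merely $E_\infty$) dga structure, and check that both comparison maps are genuine dga maps. The reason $D^*(X)$ must be introduced, rather than comparing $\apl{X}$ and $\cochain{X}$ directly, is precisely that $\cochain{X}$ is not commutative; $D^*(X)$ mediates between the strictly commutative world of polynomial forms and the merely associative world of singular cochains, and that mediation is exactly the bookkeeping obstacle.
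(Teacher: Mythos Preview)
The paper does not prove this theorem: it is quoted verbatim as \cite[Corollary 10.10]{felix-halperin-thomas01} and used as background. So there is no ``paper's own proof'' to compare against; your outline is essentially the standard argument from that reference.

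That said, there is a concrete error in your sketch. You write that there are ``two obvious natural dga maps \emph{out of} $D^*(X)$'', one to $\cochain{X}$ and one to $\apl{X}$. The theorem as stated (and as proved in \cite{felix-halperin-thomas01}) has the arrows going the other way:
\[
 \cochain{X} \xrightarrow{\qis} D^*(X) \xleftarrow{\qis} \apl{X}.
\]
In the standard construction, $D^*(X)$ is built from the simplicial dga $C_{\mathrm{PL}}\otimes A_{\mathrm{PL}}$ (here $C_{\mathrm{PL},n}$ is the normalized cochain complex of $\Delta^n$), and the two comparison maps are induced by the unit inclusions $C_{\mathrm{PL}}\hookrightarrow C_{\mathrm{PL}}\otimes A_{\mathrm{PL}}$ and $A_{\mathrm{PL}}\hookrightarrow C_{\mathrm{PL}}\otimes A_{\mathrm{PL}}$, hence go \emph{into} $D^*(X)$. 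Your description (``pulling back along the inclusion $\K\hookrightarrow A_{\mathrm{PL}}$'', ``antidiagonal restriction'') produces maps in the wrong direction and does not match the statement. Relatedly, the argument in \cite{felix-halperin-thomas01} is not a bisimplicial spectral-sequence/edge-homomorphism argument but rather proceeds via the ``extendability'' property of the simplicial cochain algebras $C_{\mathrm{PL}}$ and $A_{\mathrm{PL}}$, together with the Poincar\'e lemma you mention. The Poincar\'e lemma input is correct; the packaging around it needs to be fixed.
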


Recall the definitions of a Sullivan algebra and a relative Sullivan algebra.
A \textit{Sullivan algebra} is a cdga of the form $\fdga{V}$, where
\begin{itemize}
 \item $V = \{V^p\}_{p\geq1}$ is a graded $\K$-module concentrated in positive degrees, and
% \item $\fga{V}$ is the free graded commutative algebra on $V$, and
 \item there is a filtration $\{V(k)\}_{k\geq-1}$ such that
	\[
	 0=V(-1) \subset V(0) \subset V(1) \subset \cdots \subset V
	\]
	and $d(V(k))\subset \fga{V(k-1)}$.
\end{itemize}
A Sullivan algebra $\fdga{V}$ is called \textit{minimal} if $d(V) \subset \wedge^{\geq2}V$.
A \textit{Sullivan model} for a cdga $(A,d)$ is a quasi-isomorphism
\[
m\colon \fdga{V} \xrightarrow{\qis} (A,d)
\]
from a Sullivan algebra.
It is \textit{minimal} if $\fdga{V}$ is a minimal Sullivan algebra.
A \textit{Sullivan model} for a path-connected topological space $X$ is a Sullivan model
\[
 m\colon \fdga{V} \rightarrow \apl{X}
\]
for $\apl{X}$.

A \textit{relative Sullivan algebra with base $(B,d)$} is a cdga of the form $(B\otimes\fga{V}, d)$, where
\begin{itemize}
 \item $(B,d) = (B\otimes\K,d)$ is a sub cdga with $\cohom{B,d} = \K$,
 \item $\K\otimes V = V = \{V^p\}_{p\geq1}$ is a graded $\K$-module concentrated in positive degrees, and
 \item there is a filtration $\{V(k)\}_{k\geq-1}$ such that
	\[
	 0=V(-1) \subset V(0) \subset V(1) \subset \cdots \subset V
	\]
	and $d(V(k))\subset B\otimes\fga{V(k-1)}$.
\end{itemize}
We always write the base algebra $B$ to the left of the symbol $\otimes$.
A relative Sullivan algebra $(B\otimes\fga{V}, d)$ is called \textit{minimal} if $d(V) \subset B^{+}\otimes\fga{V} + B\otimes\wedge^{\geq2}V$.
Let $\varphi\colon (B,d) \rightarrow (C,d)$ be a cdga homomorphism with $\cohom{B,d}=\cohom{C,d}=\K$.
A \textit{(relative) Sullivan model} for the homomorphism $\varphi$ is a quasi-isomorphism
\[
 m\colon (B\otimes\fga{V},d) \xrightarrow{\qis} (C,d)
\]
from a relative Sullivan algebra with base $(B,d)$ such that the restriction $m|_B$ is equal to $\varphi$.
It is \textit{minimal} if $(B\otimes\fga{V},d)$ is a minimal relative Sullivan algebra.
A \textit{(relative) Sullivan model} for a map $f\colon X\rightarrow Y$ of path-connected topological spaces is a Sullivan model for $\apl{f}$.

Then, the existence of a (relative) Sullivan model is given by the following proposition.

\begin{prop}
 [{\cite[Proposition 12.1, Proposition 14.3]{felix-halperin-thomas01}}]
 \label{prop_SullModel}
 \quad
 \begin{enumerate}
  \item	Any cdga $(A,d)$ with $\cohom[0]{A,d} = 0$ and any path-connected topological space $X$ has a Sullivan model.
  \item	Any cdga homomorphism $\varphi\colon (B,d)\rightarrow(C,d)$ with $\cohom[0]{B,d}=\cohom[0]{C,d}=\K$ and $\cohom[1]{\varphi}$ injective 
	has a relative Sullivan model.
 \end{enumerate}
\end{prop}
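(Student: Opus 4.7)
The plan is to build the required (relative) Sullivan algebra inductively on cohomological degree, treating part (1) as the special case $B = \K$ of part (2). At each stage I extend a partial relative Sullivan algebra by adjoining two kinds of new generators: closed generators whose images represent new cohomology classes of $C$, and generators whose differentials kill unwanted classes in the kernel of the induced map on cohomology.

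More precisely, for (2), begin with $(B \otimes \wedge V(-1), d) = (B, d)$ and $m_{-1} = \varphi$. Suppose inductively that $(B \otimes \wedge V(n-1), d)$ and a cdga extension $m_{n-1}$ of $\varphi$ have been constructed, with $\cohom{m_{n-1}}$ an isomorphism in degrees below $n$ and injective in degree $n$. Adjoin closed generators $x_\alpha$ in degree $n$ with $d x_\alpha = 0$ and $m(x_\alpha) = c_\alpha$, where $\{[c_\alpha]\}$ spans a complement of $\im \cohom[n]{m_{n-1}}$ in $\cohom[n]{C}$; this makes the induced map surjective in degree $n$. Then adjoin generators $y_\beta$ in degree $n$ with $d y_\beta = z_\beta$, where $\{[z_\beta]\}$ is a basis of the kernel of the induced map on $\cohom[n+1]{}$ after the first enlargement; since $[m(z_\beta)] = 0$ in $\cohom{C}$, one may choose $m(y_\beta) \in C$ with coboundary $m(z_\beta)$. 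Setting $V(n) = V(n-1) \oplus \K\{x_\alpha\} \oplus \K\{y_\beta\}$, the condition $d V(n) \subset B \otimes \wedge V(n-1)$ holds by construction, and the inductive hypothesis is restored one degree higher.

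Taking $V = \bigcup_n V(n)$ with the assembled map $m$ produces the desired relative Sullivan model: the filtration exhibits $(B \otimes \wedge V, d)$ as a relative Sullivan algebra, and $\cohom{m}$ is an isomorphism by construction. For (1), apply this with $B = \K$ and $\varphi$ the unit map $\K \hookrightarrow A$; for a path-connected topological space $X$, apply the cdga version to $\apl{X}$ via Theorem \ref{thm_Apl}.

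The main obstacle, and the step where the low-degree hypotheses of (2) are indispensable, is the degree bookkeeping needed to ensure $V$ lies in positive degrees. The conditions that $\cohom[0]{\varphi}$ is an isomorphism and $\cohom[1]{\varphi}$ is injective rule out the need for closed generators in degree zero (which would enlarge $\cohom[0]{}$) and for kernel-killing generators of degree zero (whose differentials would be forced to lie in degree one). Beyond this, the verification reduces to tracking the long exact sequence of the extension at each inductive step and choosing representatives consistently.
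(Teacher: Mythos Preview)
The paper does not give its own proof of this proposition; it is simply quoted from \cite[Proposition~12.1, Proposition~14.3]{felix-halperin-thomas01}. Your sketch is essentially the standard inductive construction carried out in that reference, and it is correct in outline.

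One small point of bookkeeping: you assert that with $V(n)=V(n-1)\oplus\K\{x_\alpha\}\oplus\K\{y_\beta\}$ the condition $dV(n)\subset B\otimes\wedge V(n-1)$ holds, but the cocycles $z_\beta$ you choose live in the algebra \emph{after} the first enlargement, so $dy_\beta=z_\beta$ may already involve the $x_\alpha$'s (for instance, a product $x_\alpha x_{\alpha'}$ in low degrees). This does not damage the argument---the definition of a relative Sullivan algebra only asks for \emph{some} exhaustive filtration with the nilpotence property, not a filtration by degree---so inserting the intermediate stage $V(n-1)\oplus\K\{x_\alpha\}$ fixes it. But the sentence as written is not quite accurate.
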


Moreover, \textit{minimal} (relative) Sullivan models are constructed as sub cdga's of (relative) Sullivan models.

\begin{thm}
 [{\cite[Theorem 14.9]{felix-halperin-thomas01}}]
 \label{thm_MinimalModel}
 Let $(B\otimes\fga{V}, d)$ be a relative Sullivan algebra.
 Then there is a graded subspace $W$ of $V$ and a differential $d'$ on $B\otimes\fga{W}$
 such that $(B\otimes\fga{W}, d')$ is a minimal relative Sullivan algebra and the map
 \[
  (B\otimes\fga{W}, d')\rightarrow (B\otimes\fga{V}, d)
 \]
 induced by the inclusions is a cdga isomorphism.
%Then the identity of $B$ extends to an isomorphism
%\[
%(B\otimes\fga{W},d)\otimes(\fga{U\oplus dU},d) \xrightarrow{\cong} (B\otimes\fga{V},d)
%\]
%of cdga's
\end{thm}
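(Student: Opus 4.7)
The plan is to build $W$ and $d'$ by induction on the Sullivan filtration $V(-1) \subset V(0) \subset V(1) \subset \cdots$ of $V$. At the $k$th stage I will produce a graded subspace $W(k) \subseteq V(k)$ with $W(k-1) \subseteq W(k)$, a differential $d'$ on $B \otimes \fga{W(k)}$ making $(B \otimes \fga{W(k)}, d')$ a minimal relative Sullivan algebra over $(B,d)$, and a cdga retraction
\[
 \rho_k \colon (B \otimes \fga{V(k)}, d) \twoheadrightarrow (B \otimes \fga{W(k)}, d')
\]
that splits the inclusion $\iota_k$ and induces an isomorphism on cohomology. Passing to the colimit over $k$ then yields $W = \bigcup_k W(k)$ together with $d'$ and a retraction $\rho$ of $\iota \colon (B \otimes \fga{W}, d') \to (B \otimes \fga{V}, d)$, giving the desired (quasi-)iso\-morphism induced by the inclusions.

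For the inductive step I would choose a linear complement $V'$ with $V(k) = V(k-1) \oplus V'$ and examine the cocycles $\rho_{k-1}(dv) \in B \otimes \fga{W(k-1)}$ for $v \in V'$. I would then decompose $V' = W' \oplus U'$ by selecting $W'$ so that the family $\{\rho_{k-1}(dw)\}_{w \in W'}$ is linearly independent modulo the decomposables $B^+ \otimes \fga{W(k-1)} + B \otimes \wedge^{\geq 2} W(k-1)$, and taking $U'$ complementary. For each basis vector $u \in U'$ the image $\rho_{k-1}(du)$ is then a coboundary, so there is $\xi_u \in B \otimes \fga{W(k-1)}$ with $d'\xi_u = \rho_{k-1}(du)$; the change of generator $u \mapsto u - \xi_u$ (performed one filtration layer at a time) exhibits $(B \otimes \fga{V(k)}, d)$ as a tensor product of the enlarged minimal algebra $(B \otimes \fga{W(k-1) \oplus W'}, d')$ with an acyclic cdga built out of $U'$ and auxiliary primitives. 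Setting $W(k) = W(k-1) \oplus W'$ and letting $\rho_k$ project away from the acyclic factor completes the step.

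The main obstacle is arranging minimality, the relative Sullivan filtration, and the quasi-isomorphism of $\iota_k$ simultaneously. Minimality is what forces the subtlety in the choice of $W'$: one cannot merely split $V'$ as the sum of a ``cohomologically essential'' part and an ``acyclic'' part, but must work modulo decomposables so that the resulting $d'$ lands in $B^+ \otimes \fga{W(k)} + B \otimes \wedge^{\geq 2} W(k)$. Once this splitting is set up, the Künneth formula applied to the acyclic tensor factor gives the quasi-isomorphism of $\iota_k$, the filtration $\{W(k)\}$ on $W$ inherits from $\{V(k)\}$ by restriction, and the cdga isomorphism of the statement is assembled from the successive retractions $\rho_k$ by passing to the colimit.
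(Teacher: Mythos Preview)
This theorem is not proved in the paper; it is quoted from \cite[Theorem~14.9]{felix-halperin-thomas01} without argument, so there is no in-paper proof to compare against. (The conclusion as stated --- that the inclusion is a cdga \emph{isomorphism} --- is presumably a slip for quasi-isomorphism, which is what you set out to prove.)

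Your sketch, however, has a real gap in the inductive step. You select $W'\subset V'$ so that the family $\{\rho_{k-1}(dw)\}_{w\in W'}$ is linearly \emph{independent} modulo $B^{+}\otimes\fga{W(k-1)}+B\otimes\wedge^{\ge2}W(k-1)$, and then set $W(k)=W(k-1)\oplus W'$. But minimality demands the opposite: for every $w\in W(k)$ the element $d'w$ must lie in that ideal of decomposables, i.e.\ its linear part must \emph{vanish}. With your choice the new generators have nonvanishing linear part, so $(B\otimes\fga{W(k)},d')$ is not minimal. Even after swapping the roles of $W'$ and $U'$, the subsequent claim --- that $\rho_{k-1}(du)$ is a coboundary for $u\in U'$ --- does not follow: linear dependence modulo decomposables only says that $\rho_{k-1}(du)$ differs from a combination of the $\rho_{k-1}(dw_i)$ by a decomposable term, and a decomposable cocycle need not be exact.

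The standard argument (which the paper itself invokes later, in the paragraph following Proposition~\ref{prop_ConstructSemi-pureModel}, via \cite[Lemma~14.7]{felix-halperin-thomas01}) does not induct along the Sullivan filtration. One takes the linear part $d_0\colon V\to V$ of $d$ globally, splits $V=U\oplus d_0U\oplus W$ with $\ker d_0=d_0U\oplus W$ and $d_0|_U$ injective, and then a change of generators yields a cdga isomorphism $(B\otimes\fga{V},d)\cong(B\otimes\fga{W},d')\otimes(\fga{(U\oplus dU)},d)$ with the second factor contractible. The acyclic piece is built from pairs $(u,d_0u)$, both already present in $V$; an induction on $\{V(k)\}$ runs into the difficulty that when $u$ first appears in $V(k)$ its partner $d_0u\in V(k-1)$ may already have been committed to $W(k-1)$.
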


\begin{cor}
 \label{cor_MinimalModel}
 \ 
 \begin{enumerate}
  \item Any cdga $(A,d)$ with $\cohom[0]{A,d} = 0$ and any path-connected topological space $X$ has a minimal Sullivan model.
  \item Any cdga homomorphism $\varphi\colon (B,d)\rightarrow(C,d)$ with $\cohom{B,d}=\cohom{C,d}=\K$ and $\cohom[1]{\varphi}$ injective 
	has a minimal relative Sullivan model.
 \end{enumerate}
\end{cor}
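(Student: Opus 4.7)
The plan is to derive both parts of the corollary as immediate consequences of Proposition \ref{prop_SullModel} followed by Theorem \ref{thm_MinimalModel}; no further construction is required.

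For (1), I would first invoke Proposition \ref{prop_SullModel}(1) to obtain a (possibly non-minimal) Sullivan model $m\colon \fdga{V} \xrightarrow{\qis} (A,d)$. Regarding $\fdga{V}$ as a relative Sullivan algebra over the trivial base $(\K,0)$, Theorem \ref{thm_MinimalModel} then produces a graded subspace $W\subset V$ and a differential $d'$ on $\wedge W$ such that $(\wedge W, d')$ is a minimal Sullivan algebra and the inclusion $\iota\colon(\wedge W, d')\rightarrow\fdga{V}$ is a cdga isomorphism. The composite $m\circ\iota$ is then a minimal Sullivan model for $(A,d)$. For a path-connected space $X$, one applies this construction to $(A,d) = \apl{X}$.

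For (2), the same recipe works. Proposition \ref{prop_SullModel}(2) yields a relative Sullivan model $m\colon(B\otimes\fga{V},d)\xrightarrow{\qis}(C,d)$ with $m|_B = \varphi$, and Theorem \ref{thm_MinimalModel} then provides a minimal relative Sullivan algebra $(B\otimes\fga{W},d')$ together with a cdga isomorphism $\iota\colon(B\otimes\fga{W},d')\rightarrow(B\otimes\fga{V},d)$ induced by the inclusion $W\hookrightarrow V$. Because $\iota$ arises from inclusions, it satisfies $\iota|_B = \id_B$, so $m\circ\iota$ is a quasi-isomorphism whose restriction to $B$ equals $\varphi$---that is, a minimal relative Sullivan model for $\varphi$.

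I do not anticipate a real obstacle: Theorem \ref{thm_MinimalModel} already packages all of the nontrivial work, and the only point requiring verification is that the isomorphism it furnishes restricts to the identity on the base, which is immediate from its description as being induced by inclusions. The rest is composing two arrows.
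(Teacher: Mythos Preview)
Your proposal is correct and matches the paper's approach: the paper gives no explicit proof for this corollary, treating it as an immediate consequence of Proposition~\ref{prop_SullModel} and Theorem~\ref{thm_MinimalModel}, which is exactly what you do. The only point worth noting is that the paper's statement of Theorem~\ref{thm_MinimalModel} says ``cdga isomorphism'' where one would expect ``quasi-isomorphism'' (cf.\ \cite[Theorem 14.9]{felix-halperin-thomas01}); you follow the paper's wording, and either reading makes your composite $m\circ\iota$ a quasi-isomorphism, so the argument goes through.
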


Recall a basic lemma on quasi-isomorphisms of relative Sullivan algebras, which will be needed in the following sections.

\begin{lem}
 [{\cite[Lemma 14.2]{felix-halperin-thomas01}}]
 \label{lem_QisOfRelSullAlg}
 Let $(B,d), (C,d)$ be cdga's with $\cohom{B} = \cohom{C} = \K$, $(B\otimes\wedge V, d)$ a relative Sullivan algebra over $(B,d)$, and $m\colon (B,d) \xrightarrow{\qis} (C,d)$ a quasi-isomorphism of cdga's.
 Define a relative Sullivan algebra over $(C,d)$ by $(C\otimes\wedge V, d) = (C,d) \otimes_{B} (B\otimes\wedge V, d)$, where $(C,d)$ is a $(B,d)$-module via $m$.
 Then the cdga homomorphism $n\colon (B\otimes\wedge V, d) \rightarrow (C\otimes\wedge V, d)$ defined by $m \otimes \id$
% ,which is equal to the composition of the inclusion map $(B\otimes\wedge V, d) \hookrightarrow (C,d) \otimes (B\otimes\wedge V, d)$ and the quotient map $(C,d) \otimes (B\otimes\wedge V, d) \twoheadrightarrow (C,d) \otimes_{B} (B\otimes\wedge V, d)$,
 is a quasi-isomorphism.
\end{lem}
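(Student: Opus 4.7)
The plan is to proceed by induction along the stages $V(-1) \subset V(0) \subset V(1) \subset \cdots$ of the KS-filtration that structures the relative Sullivan algebra, then pass to the colimit. At the base stage $V(-1) = 0$, the map $n$ restricts to $m\colon (B, d) \xrightarrow{\qis} (C, d)$, which is a quasi-isomorphism by hypothesis. I take as inductive hypothesis that the truncated comparison map
\[
 n_{k-1}\colon (B\otimes\wedge V(k-1), d) \to (C\otimes\wedge V(k-1), d)
\]
is a quasi-isomorphism, and aim to deduce the same for its extension $n_k$ to stage $k$.

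For the inductive step I choose a graded complement $W \subset V(k)$ with $V(k) = V(k-1) \oplus W$, so that $\wedge V(k) \cong \wedge V(k-1) \otimes \wedge W$. The KS condition forces $d(W) \subset B \otimes \wedge V(k-1)$. I filter $B \otimes \wedge V(k)$ by word length in $W$ via $F_p = B \otimes \wedge V(k-1) \otimes \wedge^{\leq p} W$, and place the analogous filtration on $C \otimes \wedge V(k)$. Because $d(W) \subset F_0$, the differential preserves both filtrations, so $n_k$ becomes a map of filtered complexes. On the associated graded the differential is carried entirely by the $B \otimes \wedge V(k-1)$ factor (the Leibniz pieces involving $d(W)$ strictly decrease filtration degree), hence $E_0^p \cong (B \otimes \wedge V(k-1), d) \otimes (\wedge^p W, 0)$, and Künneth over the field $\K$ gives
\[
 E_1^p \cong \cohom{B \otimes \wedge V(k-1)} \otimes \wedge^p W,
\]
with the analogous identification on the $C$-side. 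The inductive hypothesis then implies that the map induced by $n_k$ on the $E_1$-pages is an isomorphism.

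Since $V$ lives in strictly positive degrees, every element of $\wedge^p W$ has total degree at least $p$, so in each cohomological degree the filtrations $F_p$ are eventually constant and both spectral sequences converge. The standard comparison theorem then promotes the $E_1$-isomorphism to an isomorphism $\cohom{B \otimes \wedge V(k)} \xrightarrow{\cong} \cohom{C \otimes \wedge V(k)}$. Finally, passing to the filtered colimit over $k$ and using that cohomology commutes with sequential colimits of injective inclusions yields that $n$ itself is a quasi-isomorphism. I expect the main obstacle to lie in bookkeeping the convergence of the spectral sequence cleanly and in passing to the colimit, rather than in any deep conceptual difficulty; the positive-grading hypothesis on $V$ handles both points.
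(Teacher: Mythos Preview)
The paper does not supply its own proof of this lemma; it is quoted verbatim from \cite[Lemma~14.2]{felix-halperin-thomas01} and used as a black box throughout. So there is nothing in the paper to compare against.

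Your argument is correct and is essentially the standard one (and close to what the cited reference does): induct along the Sullivan filtration $V(k)$, and at each stage filter by word length in the new generators $W$ so that the associated graded splits as $(B\otimes\wedge V(k-1),d)\otimes(\wedge^p W,0)$; K\"unneth over $\K$ plus the inductive hypothesis gives an $E_1$-isomorphism, positivity of the grading on $V$ makes the filtration bounded in each degree so the spectral sequences converge, and the comparison theorem finishes the inductive step. The passage to the colimit is likewise standard. One small remark: you never use the hypothesis $H^0(B)=H^0(C)=\K$, and indeed the quasi-isomorphism conclusion does not require it; that hypothesis is only there to guarantee that $(C\otimes\wedge V,d)$ is again a relative Sullivan algebra in the sense of the paper's definition.
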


Recall the following important theorems in rational homotopy theory. %, which we use in Section\nobreakspace \ref {sect_StringTopology}.
By these theorems, the results in this article can be understood in a topological context.

\begin{thm}
 [{\cite[Theorem 15.11]{felix-halperin-thomas01}}]
 \label{thm_IsomWithHomotopyGroup}
 Let $X$ be a simply connected topological space such that $\homol{X;\K}$ is of finite type,
 and $\fdga{V}$ the minimal Sullivan model of $X$.
 Then, the graded module $V$ is isomorphic to the homotopy group $\pi_*(X)\otimes\K$.
\end{thm}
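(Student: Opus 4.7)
The plan is to build the minimal Sullivan model of $X$ inductively along its rational Postnikov tower, so that the number of new generators added in degree $n$ at the $n$-th stage equals $\dim \pi_n(X)\otimes\K$. I would begin by replacing $X$ with its rationalisation $X_\K$: this is a simply connected space with $\pi_n(X_\K) = \pi_n(X)\otimes\K$ (already a $\K$-vector space) that shares its rational cohomology, and hence its minimal Sullivan model, with $X$. Writing $X_\K = \lim_n X_n$ for its Postnikov decomposition, each map $X_n \to X_{n-1}$ is a principal fibration with fibre $K(\pi_n(X)\otimes\K, n)$, classified by a $k$-invariant $\kappa_n\colon X_{n-1}\to K(\pi_n(X)\otimes\K, n+1)$.

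I would then prove by induction on $n\geq 2$ that $X_n$ has a minimal Sullivan model $\fdga{V_{\leq n}}$ with $\dim V_{\leq n}^k = \dim \pi_k(X)\otimes\K$ for $2\leq k\leq n$ and $V_{\leq n}^k = 0$ for $k>n$. The base case uses the standard computation that $K(\K, m)$ has minimal Sullivan model $(\fga{x_m}, 0)$ with $\deg{x_m} = m$, so that $K(W, m)$ for a finite-dimensional $\K$-vector space $W$ is modelled by $(\fga{U}, 0)$ with $U$ concentrated in degree $m$ and $\dim U = \dim W$; applying this to $X_2 \simeq K(\pi_2(X)\otimes\K, 2)$ settles $n=2$. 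For the inductive step, the $k$-invariant $\kappa_n$ is modelled by a cdga map $\varphi\colon (\fga{\overline U}, 0) \to \fdga{V_{\leq n-1}}$, where $\overline U$ is concentrated in degree $n+1$ with $\dim\overline U = \dim \pi_n(X)\otimes\K$. The standard acyclic relative Sullivan algebra $(\fga{\overline U}\otimes\fga{U}, D)$, with $U$ in degree $n$, $D|_{\fga{\overline U}}=0$, and $Du=\bar u$, models the path--loop fibration over $K(\pi_n(X)\otimes\K, n+1)$. Pushing it out along $\varphi$ and invoking Lemma~\ref{lem_QisOfRelSullAlg} produces a relative Sullivan algebra $(\fga{V_{\leq n-1}}\otimes\fga{U}, D')\xrightarrow{\qis}\apl{X_n}$ with $D'u = \varphi(\bar u)$. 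Because $V_{\leq n-1}$ has no generators in degree $\geq n$, every element of $\fga{V_{\leq n-1}}$ of degree $n+1$ is automatically a sum of products of at least two generators; hence $D'u \in \wedge^{\geq 2}V_{\leq n-1}$, this relative Sullivan algebra is minimal, and Theorem~\ref{thm_MinimalModel} identifies it as the minimal Sullivan model of $X_n$, adding exactly $\dim \pi_n(X)\otimes\K$ generators in degree $n$.

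Passing to the colimit gives a minimal Sullivan model $\fdga{V}$ of $X_\K$, hence of $X$, with $\dim V^n = \dim \pi_n(X)\otimes\K$ in every degree; by the essential uniqueness of the minimal Sullivan model this is the $\fdga{V}$ of the statement, and a choice of bases yields the required isomorphism $V \cong \pi_*(X)\otimes\K$. The main obstacle is the inductive step: one must justify that the geometric $k$-invariants $\kappa_n$ are rigorously realised by cdga maps of the form $\varphi$ above. This requires the adjunction between $\apl{-}$ and Sullivan's spatial realisation functor together with its compatibility with principal fibrations classified by Eilenberg--MacLane spaces. Given this machinery, the remainder reduces to the essentially combinatorial counting argument sketched here.
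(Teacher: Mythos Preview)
The paper does not prove this theorem at all: it is quoted from \cite[Theorem 15.11]{felix-halperin-thomas01} as background and left without proof. There is therefore no ``paper's own proof'' to compare your proposal against.

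For context, the argument in \cite{felix-halperin-thomas01} is not the Postnikov-tower induction you outline. There one defines a natural pairing $\langle\,\cdot\,;\cdot\,\rangle\colon V^n\times\pi_n(X)\to\K$ by sending $(v,\alpha)$ to the class of $m_\alpha(v)$ in $\cohom[n]{S^n}\cong\K$, where $m_\alpha$ is a Sullivan representative of $\alpha\colon S^n\to X$, and then proves directly that this pairing is nondegenerate. Your route---building the minimal model stage by stage along the rational Postnikov tower---is a legitimate alternative and is essentially the ``cellular'' point of view on the same result; it has the advantage of being constructive in the generators, whereas the pairing argument gives a canonical, natural isomorphism rather than a mere equality of dimensions.

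Your sketch is broadly sound, but two points deserve more care. First, your invocation of Theorem~\ref{thm_MinimalModel} is off: that theorem extracts a minimal sub-model from a given relative Sullivan algebra, it does not assert uniqueness. What you actually need at the inductive step is that the relative Sullivan algebra $(\fga{V_{\leq n-1}}\otimes\fga{U},D')$ you have built is already a minimal (absolute) Sullivan algebra, which follows from the degree count you give together with the inductive minimality of $\fdga{V_{\leq n-1}}$; uniqueness of the minimal model (a separate statement) then finishes the identification. Second, the passage to the colimit requires knowing that $\apl{X}\to\apl{X_n}$ is a quasi-isomorphism through degree $n$ so that the colimit of the $\fdga{V_{\leq n}}$ really is a Sullivan model of $X$; this is standard but should be stated. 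The obstacle you flag---realising the $k$-invariants as cdga maps---is the genuine technical input and is handled in \cite{felix-halperin-thomas01} via the homotopy theory of Sullivan algebras.
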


\begin{thm}
 [{\cite[Theorem 17.10]{felix-halperin-thomas01}}]
 \label{thm_Realization}
 Assume that the coefficient field $\K$ is the field $\Q$ of rational numbers.
 Let $\fdga{V}$ be a Sullivan algebra such that $\cohom[1]{\fga{V},d}=0$ and $\cohom{\fga{V},d}$ is of finite type.
 Then there is a simply connected topological space $\mathopen|\fga{V},d\mathclose|$ such that $\fdga{V}$ is a Sullivan model of $\mathopen|\fga{V},d\mathclose|$.
\end{thm}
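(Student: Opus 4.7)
The plan is to build $\mathopen|\fga{V},d\mathclose|$ directly via Sullivan's spatial realization functor and then verify that the canonical map from $\fdga{V}$ into the polynomial de Rham forms on that space is a quasi-isomorphism. First, using Theorem~\ref{thm_MinimalModel} I may replace $\fdga{V}$ by its minimal sub Sullivan algebra $(\fga{W}, d')$: the inclusion is a quasi-isomorphism, and once the minimal case is realized, composing the model map through the inclusion produces a Sullivan model for the same space starting from $\fdga{V}$.

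Given a minimal Sullivan algebra $\fdga{V}$, I would set
\[
  \mathopen|\fga{V},d\mathclose|_n \;=\; \hom_{\mathrm{cdga}}\!\bigl(\fdga{V},\,\apl{\Delta^n}\bigr),
\]
where $\apl{\Delta^n}$ denotes the polynomial de Rham forms on the standard $n$-simplex, and then pass to the geometric realization of this simplicial set. That the resulting space is simply connected follows from the hypotheses $V=\{V^p\}_{p\geq1}$ and $\cohom[1]{\fga{V},d}=0$: the fundamental group of the realization is controlled by $V^1$ and by $\cohom[1]{\fga{V},d}$, and both vanish.

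The hardest step is showing that $\fdga{V}$ really is a Sullivan model of the realization, equivalently that the adjunction unit
\[
  \fdga{V} \;\longrightarrow\; \apl{\mathopen|\fga{V},d\mathclose|}
\]
is a quasi-isomorphism. I would argue inductively along the Sullivan filtration $\{V(k)\}$. Each stage corresponds to a principal fibration whose fiber is a rational Eilenberg--MacLane space $K(\pi,n)$ and whose $k$-invariant is read off from the differential $d\colon V(k)\to\fga{V(k-1)}$. Propagating the quasi-isomorphism up the Postnikov tower uses the fact that $\apl{-}$ converts such principal fibrations into relative Sullivan extensions, via a Serre-type spectral sequence argument combined with Lemma~\ref{lem_QisOfRelSullAlg}, and the finite type hypothesis on $\cohom{\fga{V},d}$ is essential in order to handle the even-degree $K(\Q,n)$ factors, whose rational cohomology is polynomial rather than finite-dimensional. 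The main obstacle is precisely this compatibility of $\apl{-}$ with the inverse limit of the tower: finite type over $\Q$ is what rules out the pathologies that would otherwise obstruct passing the Sullivan--de Rham equivalence to the limit.
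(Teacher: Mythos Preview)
The paper does not prove this theorem at all: it is simply quoted as Theorem~17.10 of \cite{felix-halperin-thomas01} among the background facts collected in Section~\ref{sect_RationalHomotopyTheory}, and no argument is given. So there is no ``paper's own proof'' to compare your attempt against.

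That said, your outline is essentially the standard argument found in the cited reference: Sullivan's spatial realization $\mathopen|\fga{V},d\mathclose|_\bullet = \hom_{\mathrm{cdga}}(\fdga{V},\apl{\Delta^\bullet})$, reduction to the minimal case via Theorem~\ref{thm_MinimalModel}, and an inductive verification along the Sullivan filtration that the unit map into $\apl{\mathopen|\fga{V},d\mathclose|}$ is a quasi-isomorphism. Two small points. First, for a \emph{minimal} Sullivan algebra the hypothesis $\cohom[1]{\fga{V},d}=0$ already forces $V^1=0$, since $d(V)\subset\wedge^{\geq 2}V$ makes every element of $V^1$ a cocycle; so once you have passed to the minimal model you need only one of the two conditions you invoke for simple connectivity. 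Second, in the textbook treatment the finite type hypothesis is used not to control an inverse limit of a tower but rather to guarantee that $V$ itself is of finite type (given $V^1=0$ and minimality), after which the induction proceeds one degree at a time through elementary extensions modeled on rational Eilenberg--MacLane spaces. Your sketch is headed in the right direction, but the phrase about ``passing the Sullivan--de Rham equivalence to the limit'' slightly misidentifies where the difficulty lies.
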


Now, recall the definitions of $\tor$ and $\ext$.
Let $(R,d)$ be a dga, $(M,d)$ and $(N,d)$ left $(R,d)$-modules, and $(L,d)$ right $(R,d)$-module.
Then we define
\[
\tor_R^n(L, M) = \cohom[n]{L\otimes_RP}
\]
and 
\[
\ext_R^n(M, N) = \cohom[n]{\hom_R(P, N)},
\]
where $(P,d)$ is a semifree resolution of $(M,d)$, i.e. $(P,d)$ is a semifree $(R,d)$-module with a quasi-isomorphism $(P,d)\xrightarrow{\qis}(M,d)$ of $(R,d)$-modules.
For the definitions of an $(R,d)$-module and a semifree $(R,d)$-module, see Section 3 and 6 of \cite{felix-halperin-thomas01}, respectively.
Note that a relative Sullivan algebra $(B\otimes\fga{V}, d)$ is a semifree $(B,d)$-module.
Hence a relative Sullivan model can be used as a semifree resolution.

\section{String topology}
\label{sect_StringTopology}
%Let $\K$ be a field of characteristic zero.
Recall the definition of a Gorenstein space.
\begin{defn}
 [{\cite{felix-halperin-thomas88}}]
 \label{defn_Gorenstein}
 Let $m\in\Z$ be an integer.
 \begin{enumerate}
  \item An augmented dga $(A,d)$ is called a \textit{($\K$-)Gorenstein algebra of dimension} $m$ if
	\[
	\dim \ext_A^l(\K, A) = \begin{cases}
				1 \mbox{ (if $l = m$)} \\
				0 \mbox{ (otherwise),}
			       \end{cases}
	\]
	where the field $\K$ and the dga $(A,d)$ are $(A,d)$-modules via the augmentation map and the identity map, respectively.
  \item A path-connected topological space $M$ is called a \textit{($\K$-)Gorenstein space of dimension} $m$
	if $\cochain{M}$ is a Gorenstein algebra of dimension $m$.
 \end{enumerate}
\end{defn}

An important example of a Gorenstein space is given by the following theorem.

\begin{thm}
 [{\cite[Proposition 3.4]{felix-halperin-thomas88}}]
 \label{thm_FinDimImplyGorensteinSpace}
 A simply-connected topological space $X$ is a $\Q$-Gorenstein space if $\pi_*(X)\otimes\Q$ is finite dimensional.
\end{thm}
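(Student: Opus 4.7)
The plan is to reduce the computation of Ext to a Sullivan model and evaluate it using an explicit semifree resolution. Since $\pi_*(X)\otimes\Q$ is finite dimensional and $X$ is simply connected, Corollary\nobreakspace\ref{cor_MinimalModel} provides a minimal Sullivan model $m\colon \fdga{V} \xrightarrow{\qis} \apl{X}$, and Theorem\nobreakspace\ref{thm_IsomWithHomotopyGroup} identifies $V \cong \pi_*(X)\otimes\Q$, so $\dim V < \infty$. The zig-zag of dga quasi-isomorphisms in Theorem\nobreakspace\ref{thm_Apl}, together with the invariance of Ext under dga quasi-isomorphisms of the ambient algebra, yields
\[
 \ext^*_{\cochain{X}}(\Q, \cochain{X}) \;\cong\; \ext^*_{\fga{V}}(\Q, \fga{V}).
\]
It therefore suffices to show that $\fdga{V}$ is a Gorenstein algebra, with Gorenstein dimension equal to the formal dimension $N := \sum_{y} \deg{y} - \sum_{x} (\deg{x} - 1)$, where $y$ (resp.\ $x$) runs over a homogeneous basis of $V^{\mathrm{odd}}$ (resp.\ $V^{\mathrm{even}}$).

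To evaluate this Ext I would use the classical acyclic closure of $\fdga{V}$ as a semifree resolution of the trivial module: set $P = (\fga{V}\otimes\fga{\overline{V}}, D)$, where $\overline{V}$ is $V$ with all degrees lowered by one (so parities are swapped), $D|_{\fga{V}} = d$, and $D$ is extended to $\overline{V}$ inductively along the Sullivan filtration $\{V(k)\}$ by $D\bar{v} = v + (\text{correction in }\fga{V}\otimes\fga{\overline{V(k-1)}})$, arranged so that $D^2 = 0$ and the augmentation $P \twoheadrightarrow \Q$ is a quasi-isomorphism. Because $P$ is semifree over $\fdga{V}$, we obtain
\[
 \ext^*_{\fga{V}}(\Q, \fga{V}) \;=\; H^*\bigl(\hom(\fga{\overline{V}}, \fga{V}), D^*\bigr),
\]
so the remaining task is to show that this cohomology is one-dimensional and concentrated in degree $N$.

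I would prove the latter by induction on the Sullivan filtration, relating the Ext for $\fga{V(k)}$ to that for $\fga{V(k-1)}$ by a change-of-rings spectral sequence of Eilenberg--Moore type for the single-generator extension: each adjoined even generator contributes a grading shift by $-(\deg{v}-1)$ and each odd generator a shift by $+\deg{v}$, so after $\dim V$ steps the total shift is $N$ and the one-dimensional top class is preserved. To keep the bookkeeping manageable I would first replace $\fdga{V}$ by a semi-pure model via Theorem\nobreakspace\ref{thm_Semi-pureModel}, since in the semi-pure case the filtration by powers of the ideal $I_V$ has a pure associated graded where the Ext can be computed directly by a Koszul-type argument, and then transport the result back through the spectral sequence. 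The main obstacle is precisely this cancellation: both $\fga{V}$ and $\fga{\overline{V}}$ can be infinite dimensional (the former when $V^{\mathrm{even}} \neq 0$, the latter when $V^{\mathrm{odd}} \neq 0$), so $\hom(\fga{\overline{V}}, \fga{V})$ is a large complex, and all of its cohomology except a single class in degree $N$ must be killed by $D^*$, which is not automatic and is what forces the semi-pure reduction.
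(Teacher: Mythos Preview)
The paper does not prove this theorem at all; it simply records it as a citation of \cite[Proposition 3.4]{felix-halperin-thomas88}. The companion statement for Sullivan algebras, Theorem~\ref{thm_FinDimImplyGorensteinAlg}, is likewise justified only by the sentence ``This can be proved by a similar method.'' So there is no argument in the paper to compare your proposal against.

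That said, your outline is essentially the standard proof from \cite{felix-halperin-thomas88}: reduce to a minimal Sullivan model with $\dim V<\infty$, use the acyclic closure $(\fga{V}\otimes\fga{\overline{V}},D)$ as a semifree resolution of $\Q$, and compute $\ext_{\fga{V}}(\Q,\fga{V})$ by induction on $\dim V$, adjoining one generator at a time. Two remarks. First, the detour through a semi-pure model via Theorem~\ref{thm_Semi-pureModel} is unnecessary here: the one-generator induction goes through directly for an arbitrary Sullivan filtration, and in fact the Remark following Theorem~\ref{thm_NonTrivialityOfShriek} points out that the operator $\Phi$ of Definition~\ref{defn_ConstructionOfShriek}, transported to $\hom_{\fga{V}}(\fga{V}\otimes\fga{\overline{V}},\fga{V})$, already produces the nontrivial class in $\ext_{\fga{V}}(\Q,\fga{V})$ without any semi-purity assumption. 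Second, be aware that the Gorenstein condition requires not only a one-dimensional $\ext^N$ but vanishing of $\ext^l$ for all $l\neq N$; your inductive step must control both simultaneously, which is where the actual work in \cite{felix-halperin-thomas88} lies.
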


In this paper, we study the dual loop product and coproduct for a Sullivan algebra $\fdga{V}$ with $\dim V < \infty$,
which is always a Gorenstein algebra by the following theorem.

\begin{thm}
 \label{thm_FinDimImplyGorensteinAlg}
 A Sullivan algebra $\fdga{V}$ (over $\K$) is a Gorenstein algebra if $V$ is finite dimensional.
\end{thm}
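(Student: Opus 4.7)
My plan is to prove the theorem by constructing an explicit semifree resolution of $\K$ over $(\wedge V, d)$ and computing $\ext^{*}_{\wedge V}(\K, \wedge V)$ directly. The first move is a reduction to the minimal case: applying Theorem~\ref{thm_MinimalModel} with base $B = \K$ produces a minimal Sullivan algebra related to $(\wedge V, d)$, and since Gorenstein-ness is preserved by quasi-isomorphisms of cdga's (both sides of $\ext$ may be computed from a common semifree resolution), I may assume $(\wedge V, d)$ is minimal.

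For a minimal Sullivan algebra with $\dim V < \infty$, I would use the \textit{acyclic closure} $(\wedge V \otimes \Gamma\bar V, D) \xrightarrow{\qis} \K$, where $\bar V$ is the degree shift with $|\bar v| = |v| - 1$ and $\Gamma\bar V$ denotes the free graded commutative algebra on $\bar V$ (exterior on its odd generators, divided powers on its even generators). The differential $D$ extends $d$ and is defined inductively along the Sullivan filtration $\{V(k)\}$ so that $D\bar v \equiv v$ modulo $\wedge V \otimes \Gamma^{\geq 1}\bar V$; see \cite[Section 6]{felix-halperin-thomas01} for the construction and the proof of acyclicity. Semifreeness then yields
\[
 \ext^{*}_{\wedge V}(\K, \wedge V) = H^{*}\bigl(\hom_{\wedge V}(\wedge V \otimes \Gamma\bar V, \wedge V), D^{\vee}\bigr).
\]

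The goal now is to show this cohomology is one-dimensional in a single degree. I would filter the complex by wordlength in $\bar V$; the $E_1$-page computes the analogous Ext for the \textit{pure} Sullivan algebra obtained by replacing $d$ with its associated graded. The pure case reduces to a Koszul complex of exterior-over-polynomial type, and a direct computation shows the $E_1$-cohomology is one-dimensional, concentrated in the single total degree
\[
 m = \sum_{v \in \mathrm{basis}\,V^{\mathrm{odd}}} |v| \;-\; \sum_{v \in \mathrm{basis}\,V^{\mathrm{even}}}(|v| - 1).
\]
Sanity check: for $(\wedge x, 0)$ with $|x|$ even one gets $m = -(|x|-1)$, and for $(\wedge y, 0)$ with $|y|$ odd one gets $m = |y|$, both matching direct computations of $\ext$ for polynomial and exterior algebras respectively; the formula is then multiplicative under Sullivan extensions. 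Since only one nonzero class survives at $E_1$, all higher differentials vanish for degree reasons and the spectral sequence collapses, giving $\dim \ext^{m} = 1$ and $\ext^{l} = 0$ for $l \neq m$.

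The main obstacle is the bookkeeping in constructing the acyclic closure --- choosing the correction terms in $D\bar v$ inductively so that $D^2 = 0$ --- together with verifying convergence of the filtration spectral sequence despite the infinite-dimensionality of $\Gamma\bar V$ when $V^{\mathrm{odd}} \neq 0$. Both are manageable thanks to $\dim V < \infty$: finite-dimensionality forces only finitely many wordlengths to contribute in each total degree, so the spectral sequence is locally bounded and converges strongly to $\ext^{*}_{\wedge V}(\K, \wedge V)$.
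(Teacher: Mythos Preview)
Your approach coincides with the paper's: the paper's proof is a one-sentence deferral to the method of \cite[Proposition~3.4]{felix-halperin-thomas88}, and that method is precisely the acyclic-closure-plus-spectral-sequence argument you sketch.

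One technical caution about your filtration. Filtering $\hom_{\wedge V}(\wedge V\otimes\Gamma\bar V,\wedge V)$ by wordlength in $\bar V$ does not produce the pure Sullivan algebra on $E_1$: the associated graded differential still carries the original $d$ on the $\wedge V$ coefficients, together with the wordlength-preserving pieces of the corrections in $D\bar v$, so what you get on $E_0$ is not the Koszul complex for $(\wedge V,d_\sigma)$. The standard argument instead filters by wordlength in the \emph{odd} generators (of $V$ and correspondingly of $\bar V$); then the associated graded of $(\wedge V,d)$ itself is the pure algebra $(\wedge V,d_\sigma)$, the acyclic closure passes to the associated graded, and $E_1\cong\ext^*_{(\wedge V,d_\sigma)}(\K,\wedge V)$ is the one-dimensional module you compute. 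Your convergence remark is then correct for this filtration: since $\dim V<\infty$ and each odd generator contributes at most a bounded power in any fixed total degree, the filtration is bounded in each degree and the spectral sequence collapses for degree reasons after $E_1$. With this adjustment your outline is complete.
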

\begin{proof}
 This can be proved by a similar method to $\protect \MakeUppercase {T}heorem\nobreakspace \ref {thm_FinDimImplyGorensteinSpace}$.
 %Use \Cref{thm_Realization}, \Cref{thm_IsomWithHomotopyGroup}, and \protect \MakeUppercase {T}heorem\nobreakspace \ref {thm_FinDimImplyGorensteinSpace}.
\end{proof}

Let $M$ be a simply connected $\K$-Gorenstein space of dimension $m$ whose cohomology $\cohom{M}$ is of finite type.
As a preparation to define the loop product and coproduct, \felix-Thomas proved the following theorem.

\begin{thm}
 [{\cite[Theorem 12]{felix-thomas09}}]
 \label{thm_ExtDiagonal}
 Let $n$ be a positive integer.
 The diagonal map $\Delta\colon M \rightarrow M^n$ makes $\cochain{M}$ into a $\cochain{M^n}$-module.
 Then we have an isomorphism
 \[
 \ext_{\cochain{M^n}}^*(\cochain{M}, \cochain{M^n}) \cong \cohom[*-(n-1)m]{M}.
 \]
\end{thm}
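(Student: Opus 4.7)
The plan is to reduce the theorem to an algebraic computation, then proceed by induction on $n$. Choose a Sullivan model $\fga{V} \xrightarrow{\qis} \apl{M}$; by the Künneth theorem for PL forms, $\fga[n]{V}$ is a Sullivan model of $M^n$ and the diagonal $\Delta$ is modeled by the multiplication map $\mu_n \colon \fga[n]{V} \to \fga{V}$. A relative Sullivan model of $\mu_n$ is then a semifree $\fga[n]{V}$-resolution of $\fga{V}$, so it suffices to prove the algebraic statement
\[
\ext^*_{\fga[n]{V}}(\fga{V}, \fga[n]{V}) \cong \cohom[*-(n-1)m]{\fga{V}, d}.
\]

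I would then induct on $n$. For $n = 1$, the cdga $\fga{V}$ is its own resolution as an $\fga{V}$-module, whence $\ext^*_{\fga{V}}(\fga{V}, \fga{V}) = H^*(\fga{V}, d) \cong \cohom{M}$, matching the claim. For $n \geq 2$, factor $\mu_n = \mu_2 \circ (\id \otimes \mu_{n-1})$ through $\fga[2]{V}$ and apply the Grothendieck change-of-rings spectral sequence along $\id \otimes \mu_{n-1} \colon \fga[n]{V} \to \fga[2]{V}$:
\[
E_2^{p,q} = \ext^p_{\fga[2]{V}}\bigl(\fga{V},\, \ext^q_{\fga[n]{V}}(\fga[2]{V}, \fga[n]{V})\bigr) \Rightarrow \ext^{p+q}_{\fga[n]{V}}(\fga{V}, \fga[n]{V}).
\]
A Künneth decomposition splits the inner Ext as $\ext_{\fga{V}}(\fga{V}, \fga{V}) \otimes \ext_{\fga[n-1]{V}}(\fga{V}, \fga[n-1]{V})$; by the inductive hypothesis this is cohomologically $\cohom{\fga{V}, d} \otimes \cohom[*-(n-2)m]{\fga{V}, d}$. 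Upgrading this cohomological statement to a quasi-isomorphism of dg $\fga[2]{V}$-modules with $\fga[2]{V}[-(n-2)m]$ collapses the $E_2$-page and reduces the Ext to $\ext^*_{\fga[2]{V}}(\fga{V}, \fga[2]{V})[-(n-2)m]$; invoking the $n = 2$ case then yields the total shift $(n-2)m + m = (n-1)m$.

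The heart of the proof is therefore the case $n = 2$: $\ext^*_{\fga[2]{V}}(\fga{V}, \fga[2]{V}) \cong \cohom[*-m]{\fga{V}, d}$. For this, take the explicit relative Sullivan model $(\fga[2]{V} \otimes \fga{\bar V}, D) \xrightarrow{\qis} \fga{V}$ with $|\bar v| = |v| - 1$ and $D \bar v = v \otimes 1 - 1 \otimes v + (\text{higher-order corrections})$. The Ext is then the cohomology of $\hom(\fga{\bar V}, \fga[2]{V})$ with the induced differential, and the Gorenstein hypothesis $\ext^*_{\fga{V}}(\K, \fga{V}) \cong \K$ concentrated in degree $m$ supplies the correct shift through a Koszul-type pairing between $\fga{V}$ and $\fga{\bar V}$. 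The main obstacle throughout is promoting cohomological identifications — both the Gorenstein hypothesis and the inductive hypothesis — to quasi-isomorphisms of dg modules, so they can be iterated through the nested Ext expressions; this relies on the finite-type hypothesis on $\cohom{M}$ together with standard rigidity results from Sullivan theory.
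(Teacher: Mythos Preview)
The paper does not prove this theorem: it is quoted verbatim from \cite[Theorem 12]{felix-thomas09} and used as a black box to obtain the shriek map $\shriek$. There is therefore no proof in the present paper to compare your proposal against.

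As for the proposal itself, the overall strategy---reduce to Sullivan models, induct on $n$ via a change-of-rings argument, and isolate the $n=2$ case as the essential computation---is reasonable and is indeed close to how F\'elix and Thomas proceed in the cited paper. However, your write-up is a plan rather than a proof: you explicitly flag that promoting the inductive and Gorenstein identifications from cohomology-level statements to genuine quasi-isomorphisms of dg $\fga[2]{V}$-modules is ``the main obstacle,'' and you do not carry this out. Likewise, the $n=2$ case is only gestured at (``a Koszul-type pairing\ldots supplies the correct shift''), whereas this is exactly where the Gorenstein hypothesis must be invoked with care. If you intend to supply an independent proof, these two points are the ones that need to be written in full.
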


By \protect \MakeUppercase {T}heorem\nobreakspace \ref {thm_ExtDiagonal}, we have $\ext_{\cochain{M^2}}^m(\cochain{M}, \cochain{M^2})\cong\cohom[0]{M}\cong\K$, hence the generator
\[
 \shriek \in \ext_{\cochain{M^2}}^m(\cochain{M}, \cochain{M^2})
\]
is well-defined up to the multiplication by a non-zero scalar.
We call this element the \textit{shriek map} for $\Delta$.
For the loop product, consider the diagram
\[
 \xymatrix{
 LM & LM \times_M LM \ar[l]_-{\comp} \ar[r]^-{\incl} \ar[d]^{\ev_0}& LM \times LM \ar[d]^{\ev_0\times\ev_0} \\
 & M \ar[r]^-{\diag} & M \times M,
 }
\]
where the maps comp, incl, and diag are the composition map, the inclusion map, and the diagonal map, respectively,
and the right square is a pullback square.
Then define the dual loop product by the composition
\[
 \dlp\colon \cohom{LM}
 \xrightarrow{\comp^*} \cohom{LM\times_MLM}
 \xrightarrow{\incl_!} \cohom{LM\times LM}
 \xleftarrow[\cong]{\times} \cohom{LM}\tpow2.
\]
Here, the map $\incl_!$ is defined by the composition
\[
 \begin{array}{l}
  \cohom{LM\times_M LM}
   \xleftarrow[\cong]{\mathrm{EM}} \tor_{\cochain{M^2}}(\cochain{M},\cochain{LM\times LM}) \\
  \xrightarrow{\tor_\id(\shriek, \id)}\tor_{\cochain{M^2}}(\cochain{M^2},\cochain{LM\times LM})
   \xrightarrow[\cong]{} \cohom{LM\times LM},
 \end{array}
\]
where the map $\mathrm{EM}$ is the Eilenberg-Moore map, which is an isomorphism (see \cite[Section 7]{felix-halperin-thomas01} for details).

For the loop coproduct, consider the diagram
\[
 \xymatrix{
 LM\times LM & LM \times_M LM \ar[l]_-{\incl} \ar[r]^-{\comp} \ar[d]^{\ev_0} & LM \ar[d]^{l}\\
 & M \ar[r]^-{\diag} & M^2,
 }
\]
where the map $l$ is defined by $l(\gamma) = (\gamma(0), \gamma(\frac{1}{2}))$ for $\gamma\in LM$, 
and the right square is a pullback square.
Then define the dual loop coproduct by the composition
\[
 \dlcop\colon \cohom{LM}\tpow2
 \xrightarrow[\cong]{\times} \cohom{LM\times LM}
 \xrightarrow{\incl^*} \cohom{LM\times_M LM}
 \xrightarrow{\comp_!} \cohom{LM},
\]
where the map $\comp_!$ is defined by the composition
\[
\begin{array}{l}
 \cohom{LM\times_M LM}
 \xleftarrow[\cong]{\mathrm{EM}} \tor_{\cochain{M^2}}(\cochain{M}, \cochain{LM})\\
 \xrightarrow{\tor_\id(\shriek, \id)} \tor_{\cochain{M^2}}(\cochain{M^2}, \cochain{LM})
 \xrightarrow[\cong]{} \cohom{LM}.
\end{array}
\]

Let $\fdga{V}$ be a Sullivan model of the Gorenstein space $M$, 
and $\mu_{\wedge V}\colon \fdga{V}^{\otimes 2} \rightarrow \fdga{V}$ the multiplication map of $\fdga{V}$.
Take a relative Sullivan model $\bar{\varepsilon}\colon (\wedge V\tpow 2 \otimes \wedge U, d) \xrightarrow{\qis} (\wedge V, d)$ of $\mu_{\wedge V}$.
We denote $(\mpath, d) = (\wedge V\tpow 2 \otimes \wedge U, d)$ and $(\mloop, d) = (\wedge V \otimes \wedge U, d) = (\wedge V, d) \otimes_{\wedge V\tpow{2}} (\wedge V \tpow{2} \otimes \wedge U, d)$.
For $v\in V$ and $u\in U$, we denote by $\fst{v}$, $\snd{v}$, and $u$ the elements $v\otimes 1\otimes 1$, $1\otimes v\otimes 1$, and $1\otimes 1\otimes u$ of $\mpath = \fga[2]{V}\otimes\fga{U}$, respectively.
Similarly, for $v\in V$ and $u\in U$, we denote by $v$ and $u$ the elements $v\otimes 1$ and $1\otimes u$ of $\mloop = \fga{V}\otimes\fga{U}$,
and, for $v \in V$, we denote by $\fst{v}$ and $\snd{v}$ the elements $v\otimes 1$ and $1\otimes v$ of $\fga[2]{V}$.
We use these notations throughout this article.

In \cite{naito13}, the dual loop product and coproduct are described in terms of Sullivan models using the torsion functor description of \cite{kuribayashi-menichi-naito}.
The dual loop product is induced by the following composite
\[
\begin{array}{l}
 \mloop
  \xrightarrow{\cong} \wedge V \otimes_{\wedge V\tpow{2}} \mpath
  \xleftarrow[\qis]{\bar{\varepsilon}\otimes \id} \mpath \otimes_{\wedge V\tpow{2}} \mpath
  \xrightarrow{(\mu\otimes\id)\otimes_\mu(\mu\otimes\id)} \mloop \otimes_{\wedge V} \mloop\\
 \xrightarrow{\cong} \wedge V \otimes_{\wedge V\tpow{2}}\mloop[2]
  \xleftarrow[\qis]{\bar{\varepsilon}\otimes\id} \mpath\otimes_{\wedge V\tpow{2}}\mloop[2]
  \xrightarrow{\shriek\otimes\id} \wedge V\tpow{2}\otimes_{\wedge V\tpow{2}}\mloop[2]
  \xrightarrow{\cong} \mloop[2],
\end{array}
\]
and the dual loop coproduct is induced by the following composite
\[
 \begin{array}{l}
  \mloop[2]
   \xrightarrow{\cong} \wedge V\tpow{2} \otimes_{\wedge V\tpow{4}} \mpath[2]
   \xrightarrow{\mu\otimes_{\mu'}\bar{\zeta}} \wedge V \otimes_{\wedge V\tpow{2}} (\mpath\otimes_{\wedge V\tpow{2}}\mpath)\\
  \xleftarrow[\qis]{\bar{\varepsilon}\otimes\id} \mpath\otimes_{\wedge V\tpow{2}}(\mpath\otimes_{\wedge V\tpow{2}}\mpath)
   \xrightarrow{\shriek\otimes\id} \wedge V\tpow{2}\otimes_{\wedge V\tpow{2}}(\mpath\otimes_{\wedge V\tpow{2}}\mpath)\\
  \xrightarrow{\cong} \mpath\otimes_{\wedge V\tpow{2}}\mpath
   \xrightarrow[\qis]{\bar{\varepsilon}\otimes\id} \wedge V \otimes_{\wedge V\tpow{2}} \mpath
   \xrightarrow{\cong} \mloop.
 \end{array}
\]
Here, see \cite{naito13} for the definitions of the maps in the above diagrams.
By this composition, we can \textit{define} the dual loop product and coproduct for a Sullivan algebra.
We denote these by $\dlp_{\wedge V}\colon \cohom{\mloop} \rightarrow \cohom{\mloop}\tpow{2}$ and $\dlcop_{\wedge V}\colon \cohom{\mloop}\tpow{2} \rightarrow \cohom{\mloop}$.

To study $\dlp_{\fga{V}}$ and $\dlcop_{\fga{V}}$, we will construct $\shriek$ explicitly in Section\nobreakspace \ref {sect_ConstructionOfShriek}.

\section{Construction of a relative Sullivan model for the multiplication map}
\label{sect_ModelOfMultiplication}
In this section, let $\fdga{V}$ be a Sullivan algebra satisfying $\dim V < \infty$ and $V^1 = 0$.
We will construct a relative Sullivan model for the multiplication map $\mu_{\wedge V}$ of $\fdga{V}$ inductively on $\dim V$.
This construction is similar to that of F\'{e}lix, Opera and Tanr\'{e} in \cite[Example 2.48]{felix-opera-tanre}, but is a generalization in the point that we do not assume the minimality of a Sullivan algebra.

\begin{rmk}
 Note that, although we assume $\dim V < \infty$ in this section,
 we can construct the relative Sullivan model for any Sullivan algebra $\fdga{V}$ with $V^1=0$ by taking the colimit.
\end{rmk}

To construct the relative Sullivan model, we need some lemmas.

\begin{lem}
 \label{lem_CoboundByKernel}
 Let $f\colon (C,d) \xtwoheadrightarrow{\qis} (D,d)$ be a surjective quasi-isomorphism of complexes.
 If $c \in C$ satisfies $fc = 0$ and $dc = 0$,
 %then there is an element $c' \in C$ such that $dc' = c$ and $fc' = 0$.
 we have $dc' = c$ for some $c'\in C$ with $fc'=0$.
\end{lem}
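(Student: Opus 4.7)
The plan is to use the long exact sequence in cohomology associated to the surjection $f$. Since $f$ is surjective, we have a short exact sequence of complexes
\[
 0 \to \ker f \to (C,d) \xrightarrow{f} (D,d) \to 0,
\]
where $\ker f$ inherits a differential from $C$ because it is a subcomplex (the differential $d$ commutes with $f$). This short exact sequence induces a long exact sequence in cohomology
\[
 \cdots \to \cohom[n]{\ker f} \to \cohom[n]{C} \xrightarrow{\cohom{f}} \cohom[n]{D} \to \cohom[n+1]{\ker f} \to \cdots.
\]

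Since $f$ is a quasi-isomorphism, the connecting maps $\cohom{f}$ are all isomorphisms, and by exactness this forces $\cohom{\ker f} = 0$, i.e., $\ker f$ is acyclic.

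Now, the element $c$ lies in $\ker f$ (since $fc = 0$) and is a cocycle there (since $dc = 0$). By acyclicity of $\ker f$, there exists $c' \in \ker f$ with $dc' = c$. Then $c'$ satisfies both $fc' = 0$ and $dc' = c$, as required.

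There is essentially no obstacle here; the argument is the standard "kernel of a surjective quasi-isomorphism is acyclic" observation applied to the element $c$. The only thing worth double-checking is that the short exact sequence is well-defined as a sequence of complexes, which follows from $f$ being a dga (hence chain) map.
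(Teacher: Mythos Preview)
Your proof is correct and follows the same approach as the paper: form the short exact sequence $0 \to \ker f \to C \to D \to 0$, deduce from the long exact sequence that $\ker f$ is acyclic since $f$ is a quasi-isomorphism, and conclude that the cocycle $c \in \ker f$ is a coboundary there. (Minor remark: in this lemma $f$ is only assumed to be a map of complexes, not a dga map, so your final parenthetical should say ``chain map'' rather than ``dga (hence chain) map''.)
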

\begin{proof}
 Since $f$ is surjective,
 \[
  0 \rightarrow (\ker f, d) \rightarrow (C,d) \xrightarrow{f} (D,d) \rightarrow 0
 \]
 is a short exact sequence of complexes.
 Then $\cohom{\ker f, d} = 0$ since $f$ is a quasi-isomorphism.
 Hence the cocycle $c \in \ker f$ is a coboundary in $\ker f$.
\end{proof}

\begin{lem}
 \label{lem_QisElementary}
 Let $(B\otimes\wedge x, d)$ be a relative Sullivan algebra with base $(B,d)$ satisfying $\deg{x}\geq 2$.
 Define a relative Sullivan algebra $(B\otimes\wedge y\tpow{2}\otimes\wedge\bar{y}, d)$ with base $(B,d)$ by
 $d(\snd{y}) = d(\fst{y}) = dx$ and $d\bar{y} = \snd{y} - \fst{y}$, where $dx\in B$ is considered as an element of $B\otimes\wedge y\tpow{2}\otimes\wedge\bar{y}$ by the canonical inclusion,
 and a dga homomorphism
 \[
 \beta\colon (B\otimes\wedge y\tpow{2}\otimes\wedge\bar{y}, d) \rightarrow (B\otimes\wedge x, d)
 \]
 by $\beta(b) = b$ for $b\in B$, $\beta(\snd{y}) = \beta(\fst{y}) = x$, and $\beta(\bar{y})=0$.
 Then $\beta$ is a quasi-isomorphism.
\end{lem}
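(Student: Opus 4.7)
The plan is to factor $\beta$ as a change of variables followed by projection onto a tensor factor whose complement is acyclic. First I would perform the linear substitution $z := \snd{y} - \fst{y}$, retaining $\fst{y}$. Then $\wedge \fst{y} \otimes \wedge \snd{y} = \wedge \fst{y} \otimes \wedge z$ as graded algebras, and the differentials transform to $d\fst{y} = dx$, $dz = 0$ and $d\bar{y} = z$. Under this substitution the cdga splits as an internal tensor product
\[
(B \otimes \wedge y\tpow{2} \otimes \wedge \bar{y}, d) \;\cong\; (B \otimes \wedge \fst{y}, d) \otimes_\K (\wedge z \otimes \wedge \bar{y}, d),
\]
because the differential on the second factor stays within that factor and involves neither $B$ nor $\fst{y}$.

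Next I would verify that $(\wedge z \otimes \wedge \bar{y}, d)$ has cohomology $\K$ concentrated in degree $0$. This requires a case distinction on the parity of $\deg{x}$. If $\deg{x}$ is even then $z$ is a polynomial generator and $\bar{y}$ is exterior; every element has the form $z^n$ or $\bar{y} z^n$, and $d(\bar{y} z^n) = z^{n+1}$, making every positive-degree cocycle a coboundary. If $\deg{x}$ is odd then $z$ is exterior and $\bar{y}$ is polynomial, and one computes $d(\bar{y}^n) = n z \bar{y}^{n-1}$; here the characteristic zero assumption on $\K$ is essential to invert $n$ and conclude acyclicity.

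Applying the K\"unneth formula to the above tensor decomposition and using that the augmentation $(\wedge z \otimes \wedge \bar{y}, d) \to (\K, 0)$ is a quasi-isomorphism, one deduces that the projection
\[
\alpha \colon (B \otimes \wedge \fst{y}, d) \otimes_\K (\wedge z \otimes \wedge \bar{y}, d) \longrightarrow (B \otimes \wedge \fst{y}, d)
\]
sending $z$ and $\bar{y}$ to $0$ is a quasi-isomorphism. Composing $\alpha$ with the evident cdga isomorphism $(B \otimes \wedge \fst{y}, d) \xrightarrow{\cong} (B \otimes \wedge x, d)$ defined by $\fst{y} \mapsto x$ and $\id$ on $B$ recovers $\beta$, which is therefore a quasi-isomorphism.

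The main obstacle is minor, namely bookkeeping the parity of $\deg{x}$ in the acyclicity computation and verifying that the differential on $\wedge z \otimes \wedge \bar{y}$ indeed stays within that subalgebra so that the tensor decomposition is literal. The hypothesis $\deg{x} \geq 2$ enters only to ensure that $\bar{y}$ has positive degree so that the construction stays within the framework of Sullivan algebras.
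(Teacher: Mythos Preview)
Your argument is correct. The substitution $z=\snd{y}-\fst{y}$ does split the algebra as a tensor product of cdga's, the contractibility of $(\wedge z\otimes\wedge\bar{y},d)$ goes through in both parities (with the characteristic-zero assumption genuinely used when $\deg{x}$ is odd), and K\"unneth over a field together with naturality identifies $\beta$ with the projection killing the acyclic factor.

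This is, however, a different route from the paper's. The paper builds an explicit section $\gamma\colon(B\otimes\wedge x,d)\to(B\otimes\wedge y^{\otimes2}\otimes\wedge\bar{y},d)$ by $\gamma(x)=\snd{y}$, checks $\beta\gamma=\id$, and exhibits a Sullivan homotopy $h$ into $(-)\otimes\wedge(t,dt)$ showing $\gamma\beta\simeq\id$ rel $B$; thus $\beta$ is a homotopy equivalence, a slightly stronger conclusion than quasi-isomorphism. Your approach is more elementary in that it avoids the machinery of Sullivan homotopies and reduces everything to a direct acyclicity computation, at the cost of a parity case split and an appeal to K\"unneth. The paper's approach is parity-uniform and yields the homotopy equivalence, but requires writing down the homotopy $h$ explicitly. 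Either is perfectly adequate for the lemma's use downstream.
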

\begin{proof}
 Define 
 \[
  \gamma\colon (B \otimes \wedge x, d) \rightarrow (B \otimes \wedge y\tpow{2} \otimes\wedge\bar{y}, d)
 \]
 by $\gamma(b) = b$ for $b\in B$ and $\gamma(x) = \snd{y}$,
 and
 \[
  h\colon (B \otimes \wedge y\tpow{2} \otimes\wedge\bar{y}, d) \rightarrow (B \otimes \wedge y\tpow{2} \otimes\wedge\bar{y}, d) \otimes \wedge(t, dt)
 \]by $h(b)=b$ for $b \in B$, $h(\snd{y}) = \snd{y}$, $h(\fst{y}) = (\fst{y})(1-t) + (\snd{y})t - (-1)^{\deg{y}}\bar{y}dt$, and $h(\bar{y}) = \bar{y}(1-t)$.
 Then they satisfy $\beta\gamma=\id$ and $\gamma\beta \simeq \id$ rel $B$ via the homotopy $h$, and hence $\beta$ is a homotopy equivalence.
 In particular it is a quasi-isomorphism.
\end{proof}

We construct a relative Sullivan model of the multiplication map inductively using the following proposition.

\begin{prop}
 \label{prop_ConstructionOfRelSullModel}
 Let $(\wedge V \otimes \wedge x, d)$ be a relative Sullivan algebra with base $\fdga{V}$ such that $\deg{x} \geq 2$.
 Take a relative Sullivan model
 \[
  m\colon (\wedge V^{\otimes 2}\otimes \wedge U, d) \xrightarrow{\qis} (\wedge V, d)
 \]
 for $\mu_{\wedge V}$.
 Then the followings hold.
 \begin{enumerate}
  \item There is an element $x' \in \wedge V ^{\otimes 2} \otimes \wedge U$ such that $dx' = \snd{dx} - \fst{dx}$ and $mx' = 0$.
  \item Let
	$((\wedge V \otimes \wedge x)\tpow{2} \otimes \wedge U \otimes \wedge \bar{x}, d)$
	be a relative Sullivan algebra with base
	$((\wedge V \otimes \wedge x)\tpow{2} \otimes \wedge U, d)$
	defined by $\deg{\bar{x}} = \deg{x} - 1$ and
	$d\bar{x} = \snd{x} - \fst{x} - x'$.
	Then, considering it as a relative Sullivan algebra
	$((\wedge V \otimes \wedge x)\tpow{2} \otimes (\wedge U \otimes \wedge \bar{x}), d)$
	with base $(\wedge V \otimes \wedge x, d)\tpow{2}$
	, define the map
	\[
	 n\colon ((\wedge V \otimes \wedge x)\tpow{2} \otimes (\wedge U \otimes \wedge \bar{x}), d) \xrightarrow{\qis} (\wedge V \otimes \wedge x, d)
	\]
	by $n(t) = m(t)$ for $t \in V \oplus U$,
	           $n(\fst{x}) = n(\snd{x}) = x$,
	           and $n(\bar{x}) = 0$.
	Then $n$
	is a relative Sullivan model for
	$\mu_{\wedge V \otimes \wedge x}$.
 \end{enumerate}
\end{prop}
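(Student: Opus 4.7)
The plan is to treat the two parts separately. For part (1), I would first note that $\snd{dx}-\fst{dx}$ is a $d$-cocycle in $\wedge V\tpow{2}\subset\mpath$ (from $d^2x=0$ in $\fdga V$) and that $m(\snd{dx}-\fst{dx})=0$, since $m$ restricts to $\mu_{\wedge V}$ on $\wedge V\tpow{2}$ and hence identifies $\fst v$ with $\snd v$. The map $m$ is surjective because $\mu_{\wedge V}$ is, so Lemma \ref{lem_CoboundByKernel} immediately produces the required $x'$.

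For part (2), I would first verify that the differential on $((\wedge V\otimes\wedge x)\tpow{2}\otimes\wedge U\otimes\wedge\bar x,d)$ is well-defined, which amounts to $d(\snd x-\fst x-x')=0$ and follows from part (1). A direct generator-by-generator check shows that $n$ is a dga homomorphism restricting to $\mu_{\wedge V\otimes\wedge x}$ on the base $(\wedge V\otimes\wedge x)\tpow{2}$; the only nontrivial verification is on $\bar x$, where $n(d\bar x)=m(\snd x-\fst x-x')=x-x-m(x')=0$ uses $mx'=0$.

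The substance of the proof is showing that $n$ is a quasi-isomorphism. The strategy is to put the source into the form of Lemma \ref{lem_QisElementary} by a change of variables. Setting $B=(\mpath,d)$ and introducing $\fst z=\fst x$, $\snd z=\snd x-x'$, $\bar z=\bar x$, one checks that $d\fst z=d\snd z=\fst{dx}\in B$ and $d\bar z=\snd z-\fst z$. This yields a cdga isomorphism
\[
 \iota\colon ((\wedge V\otimes\wedge x)\tpow{2}\otimes\wedge U\otimes\wedge\bar x,d)\xrightarrow{\cong}(B\otimes\wedge z\tpow{2}\otimes\wedge\bar z,d)
\]
onto exactly the auxiliary algebra produced by Lemma \ref{lem_QisElementary} applied to the relative Sullivan algebra $(B\otimes\wedge\fst x,d)$ over $B$. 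That lemma furnishes a quasi-isomorphism
\[
 \beta\colon (B\otimes\wedge z\tpow{2}\otimes\wedge\bar z,d)\xrightarrow{\qis}(B\otimes\wedge\fst x,d),
\]
and Lemma \ref{lem_QisOfRelSullAlg} applied to $m\colon B\xrightarrow{\qis}(\wedge V,d)$ gives a quasi-isomorphism
\[
 m\otimes\id\colon (B\otimes\wedge\fst x,d)\xrightarrow{\qis}(\wedge V\otimes\wedge x,d).
\]
A verification on generators then shows $n=(m\otimes\id)\circ\beta\circ\iota$; the only delicate point is that $\iota(\snd x)=\snd z+x'$ is carried by $\beta$ to $\fst x+x'$ and then by $m\otimes\id$ to $x+m(x')=x=n(\snd x)$, which works precisely because $mx'=0$.

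The main obstacle is spotting the right change of variables $\iota$; once $\fst z$ and $\snd z$ are arranged to be $d$-primitives of the same element of $B$, Lemmas \ref{lem_QisElementary} and \ref{lem_QisOfRelSullAlg} combine mechanically to finish the argument, and the seemingly extra condition $mx'=0$ from part (1) is seen to be exactly what is needed for the factorization to match $n$.
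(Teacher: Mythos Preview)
Your proof is correct and uses the same two key lemmas (Lemma~\ref{lem_QisElementary} and Lemma~\ref{lem_QisOfRelSullAlg}) as the paper, but you compose them in the opposite order. The paper first applies $\alpha=m\otimes\id$ (Lemma~\ref{lem_QisOfRelSullAlg}) to pass to $(\wedge V\otimes\wedge x^{\otimes 2}\otimes\wedge\bar x,d)$; since $m(x')=0$, the differential there already satisfies $d\bar x=\snd{x}-\fst{x}$, so Lemma~\ref{lem_QisElementary} applies directly over the base $\wedge V$ without any change of variables. Your route instead keeps the large base $B=\mpath$, introduces the substitution $\snd z=\snd x-x'$ to force $d\fst z=d\snd z$, applies Lemma~\ref{lem_QisElementary} over $B$, and only then base-changes along $m$. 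Both factorizations yield $n$; the paper's order is marginally cleaner because the base change kills $x'$ automatically, whereas your order requires the explicit isomorphism $\iota$ to achieve the same effect.
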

\begin{proof}
 (1)
 Since $dx \in \wedge V$, we have $m(\snd{dx} - \fst{dx}) = dx - dx = 0$.
 Hence, by \protect \MakeUppercase {L}emma\nobreakspace \ref {lem_CoboundByKernel}, some $x'$ satisfies the condition.

 (2)
 Because of the property $dx' = \snd{dx} - \fst{dx}$,
 we can define a cdga $((\wedge V \otimes \wedge x)\tpow{2} \otimes (\wedge U \otimes \wedge \bar{x}), d)$, and it is a Sullivan algebra.
 Since $m(x') = 0$, the map $n$ is a cdga homomorphism.
 It follows immediately from the definition
 that the restriction of $n$ to $(\wedge V \otimes \wedge x)\tpow{2}$
 is equal to $\mu_{\wedge V \otimes \wedge x}$

 Hence it is enough to show that $n$ is a quasi-isomorphism.
 Define a relative Sullivan algebra with base $\fdga{V}$ by
 \[
  (\wedge V \otimes {\wedge x}\tpow{2} \otimes \wedge\bar{x}, d) = \fdga{V} \otimes_{{\wedge V}\tpow{2}\otimes\wedge U} ((\wedge V\tpow{2} \otimes \wedge U) \otimes (\wedge x\tpow{2} \otimes \wedge \bar{x}), d),
 \]
 where $\fdga{V}$ is a $(\wedge V\tpow{2} \otimes \wedge U, d)$-module via $m$,
 and $((\wedge V\tpow{2} \otimes \wedge U) \otimes (\wedge x\tpow{2} \otimes \wedge \bar{x}), d)$
 is canonically identified with
 $((\wedge V \otimes \wedge x)\tpow{2} \otimes (\wedge U \otimes \wedge \bar{x}), d)$.
 Then the map
 \[
 \alpha \colon ((\wedge V\tpow{2} \otimes \wedge U) \otimes (\wedge x\tpow{2} \otimes \wedge \bar{x}), d) \rightarrow (\wedge V \otimes {\wedge x}\tpow{2} \otimes \wedge\bar{x}, d)
 \]
 defined by $m\otimes\id$ is a quasi-isomorphism by \protect \MakeUppercase {L}emma\nobreakspace \ref {lem_QisOfRelSullAlg}.
 Define a cdga homomorphism
 \[
  \beta\colon (\wedge V \otimes \wedge x\tpow{2} \otimes \wedge\bar{x}, d) \rightarrow (\wedge V \otimes x, d)
 \]
 by $\beta(v) = v$ for $v\in V$, $\beta(\fst{x}) = \beta(\snd{x}) = x$,
 and $\beta(\bar{x}) = 0$.
 Then $\beta$ is a quasi-isomorphism by \protect \MakeUppercase {L}emma\nobreakspace \ref {lem_QisElementary}.
 Hence $n=\beta\alpha$ is also a quasi-isomorphism.
 This proves the proposition.
 \[
 \xymatrix@C=0pt{
 (\wedge V \otimes \wedge x)\tpow{2} \otimes (\wedge U \otimes \wedge \bar{x}) \ar[d]^\cong \ar[rr]^-n
 && \wedge V \otimes \wedge x \ar[d]^=\\
 (\wedge V\tpow{2} \otimes \wedge U) \otimes (\wedge x\tpow{2} \otimes \wedge \bar{x}) \ar[rr]^-{n} \ar[dr]_\alpha
 && \wedge V \otimes \wedge x \\
 &\wedge V \otimes (\wedge x\tpow{2} \otimes \wedge\bar{x}) \ar[ur]_\beta
 }
 \]
% Then, since $n = \beta\alpha$ and $\alpha$ is a quasi-isomorphism, it is enough to show that $\beta$ is a quasi-isomorphism.
%
% Define $\gamma\colon (\wedge V \otimes \wedge x, d) \rightarrow (\wedge V \otimes \wedge x\tpow{2} \otimes\wedge\bar{x}, d)$ by $\gamma(v) = v$ for $v\in V$ and $\gamma(x) = \snd{x}$,
% and $h\colon (\wedge V \otimes \wedge x\tpow{2} \otimes\wedge\bar{x}, d) \rightarrow (\wedge V \otimes \wedge x\tpow{2} \otimes\wedge\bar{x}, d) \otimes \wedge(t, dt)$ by $h(v)=v$ for $v \in V$, $h(\snd{x}) = x$, $h(\fst{x}) = (\fst{x})(1-t) + (\snd{x})t - (-1)^{\deg{x}}\bar{x}dt$, and $h(\bar{x}) = \bar{x}(1-t)$.
% Then they satisfy $\beta\gamma=\id$ and $\gamma\beta \simeq \id$ via the homotopy $h$, and hence $\beta$ is a homotopy equivalence.
% In particular it is a quasi-isomorphism.

\end{proof}

Recall, from Section\nobreakspace \ref {sect_RationalHomotopyTheory}, that $\overline{V}$ is a graded module defined by $\overline{V}^n = V^{n+1}$.

\begin{cor}
 \label{cor_ConstructionOfRelSullModel}
 There is a relative Sullivan model for $\mu_{\wedge V}$ of the form
 \[
  m\colon (\wedge V\tpow 2\otimes \wedge \overline{V}, d) \xrightarrow{\qis} (\wedge V, d).
 \]
\end{cor}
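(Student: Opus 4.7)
The plan is to prove the corollary by induction on $\dim V$, using Proposition~\ref{prop_ConstructionOfRelSullModel} as the inductive step to enlarge a relative Sullivan model for the multiplication map one generator at a time. Since $\dim V < \infty$ and $\fdga{V}$ is a Sullivan algebra, we may choose a basis $x_1,\ldots,x_n$ of $V$ that is compatible with a Sullivan filtration, so that $dx_k \in \wedge(x_1,\ldots,x_{k-1})$ for every $k$. Set $V_k = \K x_1 \oplus \cdots \oplus \K x_k$, so that $(\wedge V_k, d)$ is a sub Sullivan algebra of $\fdga{V}$ and, for each $k$, $(\wedge V_{k+1}, d) = (\wedge V_k \otimes \wedge x_{k+1}, d)$ is a relative Sullivan algebra over $(\wedge V_k, d)$ with a single generator. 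The hypothesis $V^1 = 0$ ensures $\deg x_{k+1} \geq 2$ for every $k$, so Proposition~\ref{prop_ConstructionOfRelSullModel} applies at every stage.

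For the base case $k = 0$, one has $\wedge V_0 = \K$ and $\overline{V_0} = 0$, and the identity map $(\K, 0) \to (\K, 0)$ serves as a relative Sullivan model for $\mu_\K$ of the required form. For the inductive step, assume we have a relative Sullivan model
\[
m_k\colon (\wedge V_k\tpow{2} \otimes \wedge \overline{V_k}, d) \xrightarrow{\qis} (\wedge V_k, d)
\]
for $\mu_{\wedge V_k}$. Applying Proposition~\ref{prop_ConstructionOfRelSullModel} to the relative Sullivan algebra $(\wedge V_k \otimes \wedge x_{k+1}, d)$ produces an element $x' \in \wedge V_k\tpow{2} \otimes \wedge \overline{V_k}$ with $dx' = \snd{dx_{k+1}} - \fst{dx_{k+1}}$ and $m_k(x') = 0$, and yields a relative Sullivan model
\[
m_{k+1}\colon (\wedge V_{k+1}\tpow{2} \otimes \wedge \overline{V_k} \otimes \wedge \bar{x}_{k+1}, d) \xrightarrow{\qis} (\wedge V_{k+1}, d)
\]
for $\mu_{\wedge V_{k+1}}$. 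The new generator $\bar{x}_{k+1}$ provided by the proposition has degree $\deg x_{k+1} - 1$, so it fits the convention defining $\overline{V_{k+1}}$, and we may canonically identify $\overline{V_k} \oplus \K \bar{x}_{k+1}$ with $\overline{V_{k+1}}$. Setting $k = n$ completes the induction.

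The main obstacle is essentially bookkeeping: one must verify that the graded module of new generators accumulated through the induction is canonically $\overline{V}$ rather than merely some isomorphic but a priori distinct graded module. This is guaranteed by the compatibility of the degree convention $\deg \bar{x} = \deg x - 1$ used in Proposition~\ref{prop_ConstructionOfRelSullModel} with the definition $\overline{V}^n = V^{n+1}$ recalled from Section~\ref{sect_RationalHomotopyTheory}.
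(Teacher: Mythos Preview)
Your proof is correct and follows exactly the approach of the paper, which simply says to use Proposition~\ref{prop_ConstructionOfRelSullModel} inductively on $\dim V$. Your write-up merely makes explicit the bookkeeping (choosing a filtration-compatible basis, identifying $\overline{V_k}\oplus\K\bar{x}_{k+1}$ with $\overline{V_{k+1}}$) that the paper leaves to the reader.
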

\begin{proof}
 Use \protect \MakeUppercase {P}roposition\nobreakspace \ref {prop_ConstructionOfRelSullModel} inductively on $\dim V$.
\end{proof}

\begin{rmk}
 We can explicitly construct the element $x'$ in \protect \MakeUppercase {P}roposition\nobreakspace \ref {prop_ConstructionOfRelSullModel} if $U = \overline{V}$.
 See Appendix\nobreakspace \ref {sect_appendix} for details.
\end{rmk}

Recall the ideal $I_V = \ideal{x_1, \ldots, x_p}{\wedge V}$ defined in Section\nobreakspace \ref {sect_intro}.
Similarly, define ideals $\bar{I}_V$ and $\bar{I}_{V,i}$ by
\[
 \bar{I}_V = \ideal{\fst{x_1}, \ldots, \fst{x_p}, \snd{x_1}, \ldots, \snd{x_p}, \bar{x}_1 ,\ldots, \bar{x}_p}{\wedge V\tpow 2 \otimes \wedge \overline{V}}
\]
and
\[
 \bar{I}_{V,i} = \ideal{\fst{x_1}, \ldots, \fst{x_p}, \snd{x_1}, \ldots, \snd{x_p}, \bar{x}_1 ,\ldots, \bar{x}_{i-1}, \bar{x}_{i+1}, \ldots\bar{x}_p}{\wedge V\tpow 2 \otimes \wedge \overline{V}}
\]
for $1 \leq i \leq p$.
Note that
statements such as ``$\bar{I}_V$ is a differential ideal'' do not make sense,
since a differential on $\wedge V\tpow 2 \otimes \wedge \overline{V}$ is not yet specified.

Recall, from Section\nobreakspace \ref {sect_intro}, that a semi-pure Sullivan algebra $\fdga{V}$ is a Sullivan algebra with $\dim V < \infty$ and $d(I_V)\subset I_V$.

\begin{defn}
 \label{defn_NiceRelSullModel}
 Let $\fdga{V}$ be a semi-pure Sullivan algebra.
 A relative Sullivan model
 $m\colon (\wedge V\tpow 2\otimes \wedge \overline{V}, d) \xrightarrow{\qis} \fdga{V}$
 for $\mu_{\wedge V}$ is {\it nice} if it satisfies the following conditions:
 \begin{enumerate}
  \renewcommand{\labelenumi}{(\alph{enumi})}
  \item $d(\bar{I}_V) \subset \bar{I}_V$ (i.e. $\bar{I}_V$ is a \textit{differential} ideal of $(\wedge V\tpow 2\otimes \wedge \overline{V}, d)$),
  \item $d\bar{x}_i \in \bar{I}_{V,i}$ for $1\leq i \leq p$, and
  \item the map $m$ induces a complex homomorphism $m'\colon (\bar{I}_V, d) \rightarrow (I_V, d)$ and a dga homomorphism $m''\colon ((\wedge V\tpow 2 \otimes \wedge \overline{V})/\bar{I}_V, d) \rightarrow (\wedge V / I_V, d)$, and these are quasi-isomorphisms
 \end{enumerate}
\end{defn}

Here, the ideal $\bar{I}_{V,i}$ is not necessarily a {\it differential} ideal.
To prove the existence of a nice relative Sullivan model, we need the following proposition.

\begin{prop}
 \label{prop_NiceRelSullModel}
 Let $\fdga{V}$ be a semi-pure Sullivan algebra and
 \[
  m\colon (\wedge V\tpow 2\otimes \wedge \overline{V}, d) \xrightarrow{\qis} \fdga{V}
 \]
 a nice relative Sullivan model for $\mu_{\wedge V}$.
 If $(\wedge V\otimes \wedge x, d)$ is a relative Sullivan algebra with $\deg{x}\geq 2$ which is semi-pure as an absolute Sullivan algebra,
 then the followings hold.
 \begin{enumerate}
  \item If $\deg{x}$ is even, some $x' \in \bar{I}_V$ satisfies $dx'=\snd{dx} - \fst{dx}$ and $mx'=0$.
  \item If $\deg{x}$ is even, choose $x'$ as in (1) of this proposition.
	If $\deg{x}$ is odd, choose $x'$ as in (1) of \protect \MakeUppercase {P}roposition\nobreakspace \ref {prop_ConstructionOfRelSullModel}.
	Using this $x'$, define
	\[
	 n\colon ((\wedge V \otimes \wedge x)\tpow{2} \otimes (\wedge \overline{V} \otimes \wedge \bar{x}), d) \xrightarrow{\qis} (\wedge V \otimes \wedge x, d)
	\]
	by \protect \MakeUppercase {P}roposition\nobreakspace \ref {prop_ConstructionOfRelSullModel} (2).
	Then $n$ is also nice.
 \end{enumerate}
\end{prop}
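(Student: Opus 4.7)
The plan is to settle (1) via Lemma \ref{lem_CoboundByKernel} applied to the quasi-isomorphism $m'\colon (\bar{I}_V, d) \to (I_V, d)$ coming from nicety of $m$, and then to verify the three conditions of Definition \ref{defn_NiceRelSullModel} for $n$ case by case.

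For (1), since $(\wedge V \otimes \wedge x, d)$ is semi-pure and $\deg{x}$ is even, the generator $x$ lies in the even part, so $dx \in I_{V \oplus \K x}$. Combined with $dx \in \wedge V$ and $I_{V \oplus \K x} \cap \wedge V = I_V$ (monomials in $\wedge V$ carry no $x$), this forces $dx \in I_V$. Hence $c := \snd{dx} - \fst{dx} \in \bar{I}_V$ is a cocycle with $m(c) = 0$. The map $m'$ is surjective: given $\sum_i x_i y_i \in I_V$, lifting each $y_i$ to some $\tilde{y}_i \in \wedge V\tpow{2} \otimes \wedge \overline{V}$ via the surjective $m$ yields a preimage $\sum_i \fst{x_i}\tilde{y}_i \in \bar{I}_V$. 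Lemma \ref{lem_CoboundByKernel} then supplies the required $x'$.

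For (2), conditions (a) and (b) reduce to checking the differential of each generator of $\bar{I}_{V \oplus \K x}$. The generating set is indexed by a basis of $(V \oplus \K x)^{\mathrm{even}}$; it consists of the old generators augmented by $\{\fst{x}, \snd{x}, \bar{x}\}$ when $\deg{x}$ is even (Case A), and coincides with the old set when $\deg{x}$ is odd (Case B). Old generators carry their old differentials, which lie in $\bar{I}_V \subset \bar{I}_{V \oplus \K x}$ by nicety of $m$, and old (b) transfers because $\bar{I}_{V, i} \subset \bar{I}_{V \oplus \K x, i}$. In Case A, $\fst{dx}$ and $\snd{dx}$ lie in $\bar{I}_V$ by the argument of part (1), and $d\bar{x} = \snd{x} - \fst{x} - x' \in \bar{I}_{V\oplus\K x}$ using $x' \in \bar{I}_V$ from part (1); this expression contains no $\bar{x}$, so (b) holds for the new bar-variable as well.

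The substantive step is (c). The map $n$ induces a morphism between the short exact sequence $0 \to \bar{I}_{V \oplus \K x} \to E \otimes \wedge(\fst{x}, \snd{x}, \bar{x}) \to Q \to 0$ (writing $E = \wedge V\tpow{2} \otimes \wedge \overline{V}$) and its analogue $0 \to I_{V \oplus \K x} \to \wedge(V \oplus \K x) \to R \to 0$ in the target. Since $n$ is a quasi-isomorphism on the middle by Proposition \ref{prop_ConstructionOfRelSullModel}(2), the five-lemma applied to the long exact sequences reduces matters to showing $n''\colon Q \to R$ is a quasi-isomorphism. In Case A the ideal $\bar{I}_{V\oplus\K x}$ absorbs $\fst{x}, \snd{x}, \bar{x}$ outright and $I_{V \oplus \K x}$ absorbs $x$, so $Q = E/\bar{I}_V$, $R = \wedge V/I_V$, and $n''$ reduces to the old $m''$, already a quasi-isomorphism. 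The main obstacle is Case B, where $Q = (E/\bar{I}_V) \otimes \wedge(\fst{x}, \snd{x}, \bar{x})$ and $R = (\wedge V/I_V) \otimes \wedge x$ carry twisted differentials through $\fst{dx}, \snd{dx}, x'$, and $dx$. Here I would filter $Q$ by polynomial degree in $\fst{x}, \snd{x}, \bar{x}$ and $R$ by polynomial degree in $x$; since all of the twists live in strictly lower filtration degree, the associated-graded differentials trivialize, and the graded map splits as $m'' \otimes \beta$, with $\beta\colon (\wedge(\fst{x}, \snd{x}, \bar{x}), d_0) \to (\wedge x, 0)$ the elementary quasi-isomorphism of Lemma \ref{lem_QisElementary} (applied with trivial base and $dx = 0$). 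Tensoring quasi-isomorphisms over the field $\K$ yields a quasi-isomorphism on the associated graded, and a standard spectral sequence comparison, convergent because all graded pieces are degreewise finite dimensional in positive degrees, promotes this to $n''$ being a quasi-isomorphism, completing the verification of (c).
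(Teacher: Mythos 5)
Your proof is correct and its skeleton matches the paper's: part (1) via Lemma\nobreakspace\ref{lem_CoboundByKernel} applied to $m'\colon(\bar{I}_V,d)\to(I_V,d)$, direct verification of conditions (a) and (b), and a five-lemma reduction of (c) to showing that $n''$ is a quasi-isomorphism. The one genuinely different step is the odd-degree case of (c). The paper factors $n''=\beta\alpha$, where $\alpha$ is the base change of the relative Sullivan algebra $((\wedge V\tpow{2}\otimes\wedge\overline{V})/\bar{I}_V)\otimes\wedge\fst{x}\otimes\wedge\snd{x}\otimes\wedge\bar{x}$ along $m''$ (a quasi-isomorphism by Lemma\nobreakspace\ref{lem_QisOfRelSullAlg}) and $\beta$ is the elementary contraction of Lemma\nobreakspace\ref{lem_QisElementary}; you instead filter by word-length in $\fst{x},\snd{x},\bar{x}$ and compare spectral sequences, whose associated graded splits as $m''\otimes\beta$ by the K\"unneth theorem. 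Both arguments are valid, and the filtration route is a reasonable substitute, but the paper's factorization avoids convergence bookkeeping; note that your stated reason for convergence should be that the filtration is bounded in each total degree (because $\deg{\bar{x}}\geq1$ and everything is concentrated in non-negative degrees), not finite-dimensionality of the graded pieces, and that the associated-graded differential does not literally trivialize, since $d_0\bar{x}=\snd{x}-\fst{x}$ preserves word-length and survives --- though your formula for $\beta$ shows you intend exactly this. Finally, your extra verifications --- that semi-purity forces $dx\in I_V$ and hence $\snd{dx}-\fst{dx}\in\bar{I}_V$, and that $m'$ is surjective by lifting the $\wedge V$-factors of an element of $I_V$ through $m$ --- are details the paper asserts without proof, and they are correct.
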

\begin{proof}
 (1)
 Since $(\wedge V \otimes \wedge x, d)$ is semi-pure, $\snd{dx} - \fst{dx} \in I_V \otimes I_V \subset \bar{I}_V$.
 This element satisfies $m'(\snd{dx} - \fst{dx})=0$ and $d(\snd{dx} - \fst{dx})=0$.
 Hence, applying \protect \MakeUppercase {L}emma\nobreakspace \ref {lem_CoboundByKernel} for the surjective quasi-isomorphism $m'\colon (\bar{I}_V, d) \rightarrow (I_V, d)$, we obtain an element $x' \in \bar{I}_V$ with the above properties.
 
 (2)
 For simplicity of notation, we write $W=V\oplus \K x$.
 
 First, we consider the case $\deg{x}$ is odd.
 Since there are no additional elements of even degree, we have the conditions (a) and (b) in \protect \MakeUppercase {D}efinition\nobreakspace \ref {defn_NiceRelSullModel},
 and the map $n$ induces
 \[
  n'\colon (\bar{I}_{V\oplus \K x}, d) \rightarrow (I_{V\oplus \K x}, d)
 \]
 and
 \[
  n''\colon (((\wedge V \otimes \wedge x)\tpow{2} \otimes (\wedge \overline{V} \otimes \wedge \bar{x}))/\bar{I}_{V\oplus \K x}, d) \rightarrow ((\wedge V \otimes \wedge x)/I_{V\oplus \K x}, d).
 \]
 Using the isomorphisms
 \[
  ((\wedge V \otimes \wedge x)\tpow{2} \otimes (\wedge \overline{V} \otimes \wedge \bar{x}))/\bar{I}_{V\oplus \K x} \cong (\wedge V\tpow{2}\otimes\wedge\overline{V})/\bar{I}_V \otimes \wedge x \tpow{2} \otimes \wedge \bar{x}
 \]
 and
 \[
  (\wedge V \otimes \wedge x)/ I_{V\oplus\K x} \cong \wedge V / I_V \otimes \wedge x,
 \]
 the map $n''$ can be identified with a map
 \[
  n''\colon ((\wedge V\tpow{2}\otimes\wedge\overline{V})/\bar{I}_V \otimes \wedge x \tpow{2} \otimes \wedge \bar{x}, d) \rightarrow (\wedge V / I_V \otimes \wedge x, d).
 \]
 %Then, we can prove that this $n''$ is a quasi-isomorphism similarly with \protect \MakeUppercase {P}roposition\nobreakspace \ref {prop_ConstructionOfRelSullModel}.
 Under this identification, $n''$ is decomposed into $n'' = \beta\alpha$ as in the following diagram.
 Here, $\alpha$ and $\beta$ are defined as in \protect \MakeUppercase {P}roposition\nobreakspace \ref {prop_ConstructionOfRelSullModel}.
 Then, $\alpha$ and $\beta$ are quasi-isomorphisms by \protect \MakeUppercase {L}emma\nobreakspace \ref {lem_QisOfRelSullAlg} and by \protect \MakeUppercase {L}emma\nobreakspace \ref {lem_QisElementary}, respectively.
 Hence $n''$ is a quasi-isomorphism. 
 \[
 \xymatrix@C=0pt{
 ((\wedge V \otimes \wedge x)\tpow{2} \otimes (\wedge \overline{V} \otimes \wedge \bar{x}))/\bar{I}_{V\oplus \K x} \ar[rr]^-{n''} \ar[d]^{\cong}
 && (\wedge V \otimes \wedge x)/I_{V\oplus \K x} \ar[d]^{\cong}\\
 (\wedge V\tpow{2}\otimes\wedge\overline{V})/\bar{I}_V \otimes \wedge x \tpow{2} \otimes \wedge \bar{x} \ar[rr]^-{n''} \ar[dr]_\alpha
 && \wedge V / I_V \otimes \wedge x\\
 &\wedge V / I_V \otimes (\wedge x\tpow{2} \otimes \wedge\bar{x}) \ar[ur]_\beta
 }
 \]
 Then, applying the five lemma to the cohomology long exact sequence of the diagram
 \[
 \xymatrix{
 0 \ar[r] & \bar{I}_W \ar[r] \ar[d]^{n'} & \wedge W\tpow{2} \otimes \wedge \overline{W} \ar[r] \ar[d]_\qis^n & (\wedge W\tpow 2 \otimes \wedge \overline{W}) / \bar{I}_W \ar[r] \ar[d]_\qis^{n''} & 0\\
 0 \ar[r] & I_W \ar[r] & \wedge W \ar[r] & \wedge W / I_W \ar[r] & 0,
 }
 \]
 we prove $n'$ is a quasi-isomorphism.

 Next, we consider the case $\deg{x}$ is even.
 By the construction in \protect \MakeUppercase {P}roposition\nobreakspace \ref {prop_ConstructionOfRelSullModel},
 we have the properties (a) and (b) in \protect \MakeUppercase {D}efinition\nobreakspace \ref {defn_NiceRelSullModel} for $n$ and that $n$ induces $n'$ and $n''$.
 By the isomorphisms $((\wedge V\tpow{2} \otimes \wedge\overline{V}) / \bar{I}_V, d)  \cong  ((\wedge W\tpow{2} \otimes \wedge\bar{W}) / \bar{I}_W, d)$ and $(\wedge V / I_V, d) \cong (\wedge W / I_W, d)$, the map $n''$ is identified with $m''$, and hence $n''$ is a quasi-isomorphism.
 \[
 \xymatrix{
 (\wedge V\tpow{2} \otimes \wedge\overline{V}) / \bar{I}_V \ar[r]^\cong \ar[d]_\qis^{m''} & (\wedge W\tpow{2} \otimes \wedge\bar{W}) / \bar{I}_W \ar[d]^{n''}\\
 \wedge V / I_V \ar[r]^\cong & \wedge W / I_W
 }
 \]
 Hence $n'$ is also a quasi-isomorphism by the cohomology long exact sequence.
\end{proof}

\begin{cor}
 \label{cor_NiceRelSullModel}
 If $\fdga{V}$ is a semi-pure Sullivan algebra, then there is a nice relative Sullivan model for $\mu_{\wedge V}$.
\end{cor}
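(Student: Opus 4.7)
The plan is to induct on $\dim V$, iteratively applying Proposition~\ref{prop_NiceRelSullModel}~(2) to build up a nice relative Sullivan model one generator at a time. For the base case $V = 0$ one has $\wedge V = \K$, the multiplication $\mu_\K$ is the identity, and the relative Sullivan model $(\K,0) \xrightarrow{\id} (\K,0)$ is vacuously nice: both $I_V$ and $\bar{I}_V$ are zero, so conditions (a)--(c) of Definition~\ref{defn_NiceRelSullModel} hold trivially.

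For the inductive step, I want to write $V = V' \oplus \K x$ such that $(\wedge V, d) = (\wedge V' \otimes \wedge x, d)$ is a relative Sullivan algebra with $\deg{x} \geq 2$ and $\fdga{V'}$ is itself semi-pure. Given such a decomposition, the inductive hypothesis applied to $\fdga{V'}$ supplies a nice relative Sullivan model for $\mu_{\wedge V'}$, and Proposition~\ref{prop_NiceRelSullModel}~(2) then promotes it to a nice relative Sullivan model for $\mu_{\wedge V}$ (using that $(\wedge V,d)$ is semi-pure by hypothesis).

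The real work is producing this decomposition while preserving semi-purity at every intermediate stage. I would begin with any Sullivan filtration $\{V(k)\}$ of $V$ and refine it so that each $V(k)$ is a graded subspace of $V$, i.e. $V(k) = (V(k) \cap V^{\mathrm{even}}) \oplus (V(k) \cap V^{\mathrm{odd}})$; this is possible because $d$ preserves parity. A further refinement by adding one homogeneous basis vector at a time yields a chain $0 = W_0 \subset W_1 \subset \cdots \subset W_n = V$ with $\dim(W_i/W_{i-1}) = 1$, $d(W_i) \subset \wedge W_{i-1}$, and each new generator of degree $\geq 2$ (using $V^1 = 0$). Setting $V' = W_{n-1}$ and letting $x$ be the last generator produces the desired splitting.

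The main obstacle I expect is verifying that each intermediate $(\wedge W_i, d)$ is semi-pure, and my plan rests on the bookkeeping identity $I_V \cap \wedge W_i = I_{W_i}$. In the homogeneous monomial basis of $\wedge V$, a basis monomial in $I_V$ contains some factor from $V^{\mathrm{even}}$; if it also lies in $\wedge W_i$, then that even factor must itself lie in $W_i$, so the monomial lies in $I_{W_i}$. From this,
\[
 d(W_i^{\mathrm{even}}) \subset \wedge W_{i-1} \cap I_V = I_{W_{i-1}} \subset I_{W_i},
\]
so $(\wedge W_i, d)$ is semi-pure, and in particular $\fdga{V'} = (\wedge W_{n-1}, d)$ is semi-pure. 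Once this is in place, the induction proceeds without further friction and produces the claimed nice relative Sullivan model for $\mu_{\wedge V}$.
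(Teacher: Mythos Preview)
Your proposal is correct and follows exactly the approach the paper takes: the paper's proof is the single line ``Use \protect\MakeUppercase{P}roposition\nobreakspace\ref{prop_NiceRelSullModel} inductively on $\dim V$,'' and you have simply unpacked the induction, including the (left implicit in the paper) verification that peeling off one parity-homogeneous generator preserves semi-purity via the identity $I_V \cap \wedge W_i = I_{W_i}$. The only cosmetic point is that ``refine'' is slightly inaccurate---to make the filtration parity-graded you enlarge each $V(k)$ to $\pi_{\mathrm{even}}(V(k)) + \pi_{\mathrm{odd}}(V(k))$ rather than shrink it---but the resulting filtration still satisfies $d(\hat V(k)) \subset \wedge \hat V(k-1)$, so the argument goes through unchanged.
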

\begin{proof}
 Use \protect \MakeUppercase {P}roposition\nobreakspace \ref {prop_NiceRelSullModel} inductively on $\dim V$.
\end{proof}

%\begin{rmk}
% If $\fdga{V}$ is a minimal Sullivan algebra, then we can construct the minimal relative Sullivan model for $\mu_{\wedge V}$ explicitly \cite[\S15(c) Example 1]{felix-halperin-thomas01}.
% It is easily proved that this model can be constructed by \protect \MakeUppercase {P}roposition\nobreakspace \ref {prop_ConstructionOfRelSullModel} taking $x'$ appropriately, and that, if $\fdga{V}$ is semi-pure, this model is nice.
%\end{rmk}

\section{Construction of the shriek map}
\label{sect_ConstructionOfShriek}

In this section, let $\fdga{V}$ be a Sullivan algebra satisfying $\dim V < \infty$ and $V^1 = 0$.
And we fix a basis
$x_1, \ldots ,x_p, y_1, \ldots y_q$ of $V$
such that $\deg{x_i}$ is even and $\deg{y_j}$ is odd for each $i,j$,
and the corresponding basis
$\bar{x}_1, \ldots, \bar{x}_p, \bar{y}_1, \ldots \bar{y}_q$ of $\overline{V}$.
We will construct the shriek map $\shriek$ for $\fdga{V}$ by induction on $\dim V$.

\begin{defn}
 \label{defn_ConstructionOfShriek}
 Let $(\wedge V \otimes \wedge x, d)$ be a relative Sullivan algebra with $\deg{x} \geq 2$.
 Take $((\wedge V \otimes \wedge x)\tpow{2} \otimes (\wedge U \otimes \wedge \bar{x}), d)$ as in \protect \MakeUppercase {P}roposition\nobreakspace \ref {prop_ConstructionOfRelSullModel}.
 Define a $\K$-linear map
 \[
  \Phi\colon \hom_{\wedge V\tpow{2}} (\wedge V\tpow{2}\otimes \wedge U, \wedge V\tpow 2) \rightarrow \hom_{(\wedge V\otimes \wedge x)\tpow 2}((\wedge V\otimes \wedge x)\tpow 2 \otimes (\wedge U \otimes \wedge \bar{x}), (\wedge V\otimes \wedge x)\tpow 2)
 \]
 of degree $\deg{x}$, if $\deg{x}$ is odd, or of degree $1-\deg{x}$, if $\deg{x}$ is even, as follows:
 \begin{enumerate}
  \item In the case $\deg{x}$ is odd,
	for
	\[
	 f \in \hom_{\wedge V\tpow{2}} (\wedge V\tpow{2}\otimes \wedge U, \wedge V\tpow 2),
	\]
	define
	\[
	 \Phi(f) \in \hom_{(\wedge V\otimes \wedge x)\tpow 2}((\wedge V\otimes \wedge x)\tpow 2 \otimes (\wedge U \otimes \wedge \bar{x}), (\wedge V\otimes \wedge x)\tpow 2)
	\]
	by
	$\Phi(f)(u) = (\snd{x} - \fst{x})f(u) - (-1)^{\deg{f}}f(x'u)$ and
	$\Phi(f)(u\bar{x}^k)=0$ if $k \geq 1$,
	for $u \in \wedge U$.
  \item In the case $\deg{x}$ is even,
	for
	\[
	 f \in \hom_{\wedge V\tpow{2}} (\wedge V\tpow{2}\otimes \wedge U, \wedge V\tpow 2),
	\]
	define $\Phi(f)$ by
	$\Phi(f)(u\bar{x}) = (-1)^{\deg{f} + \deg{u}}f(u)$ and
	$\Phi(f)(u) = 0$,
	for $u \in \wedge U$.
 \end{enumerate}
\end{defn}

\begin{lem}
 \label{lem_ChainMap}
 The linear map $\Phi$ in \protect \MakeUppercase {D}efinition\nobreakspace \ref {defn_ConstructionOfShriek} is a chain map of odd degree.
 In other words, the linear map $\Phi$ satisfies $d\Phi = -\Phi d$.
\end{lem}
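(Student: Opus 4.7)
The lemma is verified by a direct case-by-case computation on basis elements $u\bar{x}^k$ of $\wedge U \otimes \wedge \bar{x}$, handled separately for $\deg{x}$ odd and even. The core identities used throughout are $d(\snd{x} - \fst{x}) = \snd{dx} - \fst{dx} = dx'$ in the base, $d\bar{x} = \snd{x} - \fst{x} - x'$ in the extension, and (in the odd case) $(\snd{x} - \fst{x})^2 = 0$ and $(x')^2 = 0$, since these elements have odd degree.

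The first step is to record an auxiliary extension formula: for any $w \in \wedge V\tpow{2} \otimes \wedge U$, the $(\wedge V \otimes \wedge x)\tpow{2}$-linearity of $\Phi(f)$ combined with the Koszul sign rule gives, in the odd case, $\Phi(f)(w) = (\snd{x} - \fst{x}) f(w) - (-1)^{\deg{f}} f(x'w)$, and in the even case $\Phi(f)(w) = 0$. This is obtained by writing $w = \sum a_i u_i$ with $a_i \in \wedge V\tpow{2}$ and $u_i \in \wedge U$, applying the definition term by term, and pulling the $a_i$ back out using graded commutativity; the sign factors $(-1)^{\deg{a_i}\deg{\Phi(f)}}$ match $(-1)^{\deg{a_i}(\deg{f}+\deg{x})}$, which is exactly what is needed to commute $\snd{x} - \fst{x}$ (of degree $\deg{x}$) past $a_i$ and to absorb the sign of $f(x' a_i u_i)$.

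With this in hand, the odd case on $u \in \wedge U$ proceeds by expanding $d\Phi(f)(u)$ via the Leibniz rule and the hom-complex identity $d(f(w)) = (df)(w) + (-1)^{\deg{f}} f(dw)$, then rewriting the $\Phi(f)(du)$ contribution by the auxiliary formula; the cross-terms in $f(u)$, $f(du)$, $f(x'u)$, $f(x'\, du)$ combine so that the surviving expression is precisely $-\Phi(df)(u)$. For $u\bar{x}^k$ with $k \geq 1$, both $\Phi(f)(u\bar{x}^k)$ and $\Phi(df)(u\bar{x}^k)$ vanish, reducing the check to $\Phi(f)(d(u\bar{x}^k)) = 0$. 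Expanding $d(u\bar{x}^k) = (du)\bar{x}^k + (-1)^{\deg{u}} k\, u(\snd{x} - \fst{x} - x')\bar{x}^{k-1}$, the first summand is annihilated because a factor of $\bar{x}^k$ survives; for $k \geq 2$ the second summand is also killed by the residual $\bar{x}^{k-1}$; for $k = 1$ the remaining terms reduce (via the auxiliary formula applied to $x'u$) to two copies of $(\snd{x} - \fst{x}) f(x'u)$ whose signs cancel, using $(\snd{x}-\fst{x})^2 = 0$ and $(x')^2 = 0$. The even case is shorter: since $\bar{x}^2 = 0$, only $u$ and $u\bar{x}$ need checking; on $u$ both sides are trivially zero (nothing carries a $\bar{x}$ factor after applying $d$), while on $u\bar{x}$ the two occurrences of $f(du)$ arising from $d(\Phi(f)(u\bar{x}))$ and from $\Phi(f)((du)\bar{x})$ cancel, leaving $(-1)^{\deg{f}+\deg{u}}(df)(u) = -\Phi(df)(u\bar{x})$.

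The sole obstacle is sign bookkeeping; once the auxiliary extension formula and the two key structural relations $d\bar{x} = \snd{x}-\fst{x}-x'$ and $dx' = \snd{dx} - \fst{dx}$ are in place, the rest is mechanical.
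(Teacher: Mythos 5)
Your proposal is correct, and it is precisely the ``straight-forward calculation from the definition of $\Phi$'' that the paper's one-line proof invokes without writing out: the case split by the parity of $\deg{x}$, the extension of the defining formulas to general module elements by $(\wedge V\otimes\wedge x)\tpow{2}$-linearity, and the cancellations via $dx'=\snd{dx}-\fst{dx}$, $d\bar{x}=\snd{x}-\fst{x}-x'$, $(\snd{x}-\fst{x})^2=(x')^2=0$ all check out, including the signs. No discrepancy with the paper's approach.
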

\begin{proof}
 This is proved by a straight-forward calculation from the definition of $\Phi$.
\end{proof}

To prove non-triviality of the cohomology class of a cocycle in $\hom_{\wedge V\tpow 2}(\wedge V\tpow 2 \otimes \wedge \overline{V}, \wedge V\tpow 2)$, we need the following notion.

\begin{defn}
 \label{defn_good}
 Let $(\wedge V\tpow 2\otimes \wedge \overline{V}, d)$ be the relative Sullivan algebra constructed in \protect \MakeUppercase {C}orollary\nobreakspace \ref {cor_ConstructionOfRelSullModel}.
 A cochain $f \in \hom_{\wedge V\tpow 2}(\wedge V\tpow 2 \otimes \wedge \overline{V}, \wedge V\tpow 2)$ is {\it good} if $f$ satisfies the following two conditions.
 \begin{enumerate}
  \renewcommand{\labelenumi}{(\alph{enumi})}
  \item There is an element $u$ of the ideal
	$\ideal{\fstandsnd{y_1}{y_1}, \cdots , \fstandsnd{y_q}{y_q}}{\wedge V\tpow 2}$
	such that
	\[
	f(\bar{x}_1\cdots\bar{x}_p) = \pm \prod_{j=1}^{q}(\snd{y_j} - \fst{y_j}) + u.
	\]
  \item The cochain $f$ vanishes on the ideal $\ideal{\bar{y}_1, \ldots , \bar{y}_q}{\wedge V\tpow 2 \otimes \wedge\overline{V}}$.
       %For each element $\nu$ of the ideal $(\bar{y}_1, \ldots , \bar{y}_q) \subset \wedge V\tpow 2 \otimes \wedge\overline{V}$, it satisfies $f(\nu) = 0$.
 \end{enumerate}
\end{defn}

The following proposition shows that the chain map $\Phi$ defined in \protect \MakeUppercase {D}efinition\nobreakspace \ref {defn_ConstructionOfShriek} can be used to construct a good cochain.

\begin{prop}
 \label{prop_GoodnessInduction}
 Let $(\wedge V\tpow 2\otimes \wedge \overline{V}, d)$ be the relative Sullivan algebra constructed in \protect \MakeUppercase {C}orollary\nobreakspace \ref {cor_ConstructionOfRelSullModel},
 and $(\wedge V \otimes \wedge x, d)$ a relative Sullivan algebra with $\deg{x} \geq 2$.
 If an element
 \[
  f \in \hom_{\wedge V\tpow 2}(\wedge V\tpow 2 \otimes \wedge \overline{V}, \wedge V\tpow 2)
 \]
 is good, then
 \[
  \Phi(f) \in \hom_{(\wedge V \otimes \wedge x)\tpow 2}((\wedge V \otimes \wedge x)\tpow 2 \otimes (\wedge \overline{V} \otimes \wedge \bar{x}), (\wedge V \otimes \wedge x)\tpow 2)
 \]
 is also good.
\end{prop}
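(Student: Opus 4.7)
The plan is to verify conditions (a) and (b) of Definition \ref{defn_good} for $\Phi(f)$, with a case split on the parity of $\deg{x}$ since Definition \ref{defn_ConstructionOfShriek} prescribes entirely different formulas in each case. In $V\oplus\K x$, the new generator enlarges the odd basis $\{y_j\}$ if $\deg{x}$ is odd, and the even basis $\{x_i\}$ if $\deg{x}$ is even; accordingly I will track both what happens to the privileged element $\bar{x}_1\cdots\bar{x}_p$ and how the ideal in condition (b) changes.

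I handle the easier even case first. Here $\bar{x}$ is a new odd generator, joining the $\bar{x}_i$. The second clause of Definition \ref{defn_ConstructionOfShriek} gives
\[
\Phi(f)(\bar{x}_1\cdots\bar{x}_p\bar{x}) = (-1)^{\deg{f}+\deg{\bar{x}_1\cdots\bar{x}_p}} f(\bar{x}_1\cdots\bar{x}_p),
\]
which by goodness of $f$ equals $\pm\prod_j(\snd{y_j}-\fst{y_j})$ plus an element of $\ideal{\fstandsnd{y_j}{y_j}}{(\wedge V\otimes\wedge x)\tpow 2}$; since the odd generators are unchanged, condition (a) is immediate. For condition (b), $(\wedge V\otimes\wedge x)\tpow 2$-linearity reduces checking to elements $\bar{y}_j(z_0+z_1\bar{x})$ with $z_0,z_1\in\wedge\overline{V}$ (using $\bar{x}^2=0$ since $\bar{x}$ is odd). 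The first clause of the definition kills $\Phi(f)$ on the $z_0$ part, while the $z_1\bar{x}$ part evaluates to $\pm f(\bar{y}_j z_1)$, which vanishes by condition (b) for $f$.

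The odd case is the main obstacle. Here $\bar{x}$ is a new even generator and joins the $\bar{y}_j$ in condition (b). The first clause of Definition \ref{defn_ConstructionOfShriek} yields
\[
\Phi(f)(\bar{x}_1\cdots\bar{x}_p) = (\snd{x}-\fst{x})\, f(\bar{x}_1\cdots\bar{x}_p) - (-1)^{\deg{f}}\, f(x'\bar{x}_1\cdots\bar{x}_p).
\]
Using goodness of $f$, the first summand produces the expected leading term $\pm\prod_{j=1}^q(\snd{y_j}-\fst{y_j})(\snd{x}-\fst{x})$ (up to reordering signs), plus $(\snd{x}-\fst{x})$ times an element of $\ideal{\fstandsnd{y_j}{y_j}}{\wedge V\tpow 2}$, which lies in the target ideal $\ideal{\fstandsnd{y_j}{y_j},\fstandsnd{x}{x}}{(\wedge V\otimes\wedge x)\tpow 2}$. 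The crux is then to show $f(x'\bar{x}_1\cdots\bar{x}_p)=0$. By $\wedge V\tpow 2$-linearity of $f$, I expand $x'$ in a monomial basis of $\wedge\overline{V}$ over $\wedge V\tpow 2$: monomials containing some $\bar{x}_i$ are killed by $\bar{x}_i^2=0$ after multiplication by $\bar{x}_1\cdots\bar{x}_p$; monomials containing some $\bar{y}_j$ are annihilated by $f$ via condition (b); the only remaining possibility is a term of $x'$ lying in $\wedge V\tpow 2$. I rule this out by invoking the explicit telescoping formula for $x'$ from the Remark after Corollary \ref{cor_ConstructionOfRelSullModel} (detailed in Appendix \ref{sect_appendix}), according to which every summand of $x'$ contains at least one element of $\overline{V}$.

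Condition (b) in the odd case is then routine: the second clause of Definition \ref{defn_ConstructionOfShriek} annihilates anything divisible by $\bar{x}$, and on a test element $\bar{y}_j z_0$ with $z_0\in\wedge\overline{V}$ the first clause produces $(\snd{x}-\fst{x})f(\bar{y}_j z_0)-(-1)^{\deg{f}}f(x'\bar{y}_j z_0)$, both summands of which vanish by condition (b) for $f$. In summary, once the no-constant-term property of $x'$ is extracted from the appendix, the verification reduces to straightforward bookkeeping with the defining formulas of $\Phi$ and the two goodness properties of $f$.
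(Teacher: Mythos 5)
Your overall structure (case split on the parity of $\deg{x}$, verification of (a) and (b) via the defining formulas of $\Phi$) matches the paper, and your treatment of the even case and of condition (b) in both cases is correct. But there is a genuine gap at the crux of the odd case. You claim $f(x'\bar{x}_1\cdots\bar{x}_p)=0$ by ruling out summands of $x'$ lying in $\wedge V\tpow{2}$, and you justify this by appealing to the explicit telescoping formula $x'=\sum_{n\geq1}\frac{1}{n!}(sd)^n\fst{x}$ from the appendix. The proposition, however, is stated for the construction of Corollary\nobreakspace\ref{cor_ConstructionOfRelSullModel}, in which $x'$ is \emph{any} element with $dx'=\snd{dx}-\fst{dx}$ and $mx'=0$; nothing forces every monomial of such an $x'$ to contain a bar generator. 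The paper itself relies on this generality elsewhere: in the proof of Proposition\nobreakspace\ref{prop_ManyXBars} the element $x'$ is decomposed as $a+\sum b_{j_1\cdots j_l}\bar x_{j_1}\cdots\bar x_{j_l}+\sum c_k\bar y_k$ with a possibly nonzero component $a\in\ker\mu_{\fga{W}}\subset\fga[2]{W}$, and in Proposition\nobreakspace\ref{prop_NiceRelSullModel} a different choice of $x'$ is made in the even case. So your dichotomy (``contains some $\bar x_i$'' versus ``contains some $\bar y_j$'') does not exhaust the possibilities, and for a monomial $\alpha\in\wedge V\tpow{2}$ the value $f(\alpha\bar{x}_1\cdots\bar{x}_p)=\pm\alpha f(\bar{x}_1\cdots\bar{x}_p)$ need not vanish.

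The repair — and this is what the paper does — is to prove the weaker but sufficient statement $f(x'\bar{x}_1\cdots\bar{x}_p)\in J$, where $J=\ideal{\fstandsnd{y_1}{y_1},\ldots,\fstandsnd{y_q}{y_q}}{\wedge V\tpow 2}$: a monomial $\alpha$ of $x'$ with no bar factors has odd degree (since $\deg{x'}=\deg{x}$ is odd), hence contains a factor $\fst{y_l}$ or $\snd{y_l}$; writing $\alpha=\beta\fst{y_l}$ one gets
\[
f(\alpha\bar{x}_1\cdots\bar{x}_p)=\pm\beta\fst{y_l}\prod_{j}(\snd{y_j}-\fst{y_j})\pm\beta\fst{y_l}u
=\pm\beta\,\fstandsnd{y_l}{y_l}\prod_{j\neq l}(\snd{y_j}-\fst{y_j})\pm\beta\fst{y_l}u\in J,
\]
using $\fst{y_l}^2=0$. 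Since condition (a) only requires the value on $\bar{x}_1\cdots\bar{x}_p$ modulo $J$, this suffices. Your argument as written proves the proposition only for the one explicit model of the appendix, which is not enough for the inductive uses of the statement in the paper.
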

\begin{proof}
 We can assume that there is an element $u \in \ideal{\fstandsnd{y_1}{y_1}, \cdots , \fstandsnd{y_q}{y_q}}{\wedge V\tpow 2}$ such that $\displaystyle f(\bar{x}_1\cdots\bar{x}_p) = \prod_{j=1}^{q} (\snd{y_j} - \fst{y_j}) + u$, since $f$ is good if and only if $-f$ is good.
 We denote by $J$ the ideal $\ideal{\fstandsnd{y_1}{y_1}, \cdots , \fstandsnd{y_q}{y_q}}{\wedge V\tpow 2}$.
 
 (1) Suppose $\deg{x}$ is odd.
 
 (a)
 By the construction of $\Phi$,
 \begin{eqnarray*}
  \Phi(f)(\bar{x}_1\cdots\bar{x}_p) &=& (\snd{x} - \fst{x})f(\bar{x}_1\cdots\bar{x}_p) - (-1)^{\deg{f}}f(x'\bar{x}_1\cdots\bar{x}_p)\\
  &=& (\snd{x} - \fst{x})\prod_{j=1}^{q} (\snd{y_j} - \fst{y_j}) \\
  &&+ (\snd{x} - \fst{x})u - (-1)^{\deg{f}}f(x'\bar{x}_1\cdots\bar{x}_p).
 \end{eqnarray*}
 Since $(\snd{x} - \fst{x})u \in J$, it is enough to show $f(x'\bar{x}_1\cdots\bar{x}_p) \in J$.
 Write $x' = \sum_k \alpha_k$ with each $\alpha_k$ is a monomial in $\fst{x_i}$, $\snd{x_i}$, $\bar{x}_i$, $\fst{y_j}$, $\snd{y_j}$, and $\bar{y}_j$.
 We will prove that $f(\alpha_k\bar{x}_1\cdots\bar{x}_p) \in J$ for all $k$.
 If $\alpha_k$ has $\bar{x}_i$ as a factor, then $f(\alpha_k\bar{x}_1\cdots\bar{x}_p) = 0$ by $\bar{x}_i^2 = 0$.
 If $\alpha_k$ has $\bar{y}_j$ as a factor, then $f(\alpha_k\bar{x}_1\cdots\bar{x}_p) = 0$ by $\alpha_k\bar{x}_1\cdots\bar{x}_p \in \ideal{\bar{y}_1, \ldots , \bar{y}_q}{\wedge V\tpow 2 \otimes \wedge\overline{V}}$.
 If $\alpha_k$ does not have $\bar{x}_i$ nor $\bar{y}_j$ as a factor, $\alpha_k$ has $\fst{y_l}$ or $\snd{y_l}$ as a factor, since $\deg{\alpha_k}$ is odd.
 If $\alpha_k$ has $\fst{y_l}$ as a factor, we can write $\alpha_k = \beta\fst{y_l}$ with $\beta \in \wedge V \tpow{2}$.
 Then,
 \begin{eqnarray*}
  f(\alpha_k\bar{x}_1\cdots\bar{x}_p)
   &=& \pm \beta\fst{y_l} f(\bar{x}_1\cdots\bar{x}_p)\\
  &=& \pm \beta\fst{y_l} \prod_j(\snd{y_j} - \fst{y_j}) \pm \beta\fst{y_l}u\\
  &=& \pm \beta\fstandsnd{y_l}{y_l} \prod_{j\neq l}(\snd{y_j} - \fst{y_j}) \pm \beta\fst{y_l}u\\
  &\in& J.
 \end{eqnarray*}
 If $\alpha_k$ has $\snd{y_l}$ as a factor, $f(\alpha_k\bar{x}_1\cdots\bar{x}_p) \in J$ is proved similarly.
 This proves $f(x'\bar{x}_1\cdots\bar{x}_p) \in J$.
 
 (b)
 The ideal $\ideal{\bar{y}_1, \ldots , \bar{y}_q, \bar{x}}{(\wedge V \otimes \wedge x)\tpow 2 \otimes (\wedge\overline{V} \otimes \wedge \bar{x})}$ is generated over $(\wedge V\otimes \wedge x)\tpow{2}$ by the elements $\alpha \bar{y}_j$ and $\beta \bar{x}^k$ with $\alpha, \beta \in \wedge \overline{V}$ and $k\geq 1$.
 It is easily proved from the definition that $\Phi(f)(\alpha\bar{y}_j)=0$ and $\Phi(f)(\beta\bar{x}^k)=0$.
 Hence $\Phi(f)$ vanishes on the ideal $\ideal{\bar{y}_1, \ldots , \bar{y}_q, \bar{x}}{(\wedge V \otimes \wedge x)\tpow 2 \otimes (\wedge\overline{V} \otimes \wedge \bar{x})}$.

 (2) Suppose $\deg{x}$ is even.
 Then the goodness of $\Phi(f)$ is trivial by the construction of $\Phi$.
\end{proof}

Then we obtain the existence of a good cocycle.

\begin{cor}
 \label{cor_Goodness}
 Take $(\wedge V\tpow 2\otimes \wedge \overline{V}, d)$ as in \protect \MakeUppercase {C}orollary\nobreakspace \ref {cor_ConstructionOfRelSullModel}.
 Then there is a good cocycle $f \in \hom_{\wedge V\tpow 2}(\wedge V\tpow 2 \otimes \wedge\overline{V}, \wedge V\tpow 2)$.
 %Then there is a cocycle $f \in \hom_{\wedge V\tpow 2}(\wedge V\tpow 2 \otimes \wedge\overline{V}, \wedge V\tpow 2)$ and an element $\lambda \in (y_1 \otimes y_1, \ldots , y_q\otimes y_q) \subset \wedge V\tpow 2$ such that $\displaystyle f(\bar{x}_1\cdots\bar{x}_p) = \prod_{j=1}^q(\snd{y_j} - \fst{y_j}) + \lambda$.
\end{cor}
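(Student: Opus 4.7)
The plan is to obtain the good cocycle by induction on $\dim V$, paralleling the inductive construction of the relative Sullivan model in Corollary~\ref{cor_ConstructionOfRelSullModel}. All the real work has already been done in Lemma~\ref{lem_ChainMap} and Proposition~\ref{prop_GoodnessInduction}; the corollary is essentially the package.

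For the base case, I would take $V = 0$, in which case $\wedge V\tpow 2 \otimes \wedge \overline{V} = \K = \wedge V\tpow 2$, the sets of generators $\{x_i\}$ and $\{y_j\}$ are empty ($p=q=0$), and $\hom_\K(\K,\K) \cong \K$. The cochain $f=1$ is trivially a cocycle. Goodness (a) holds because the empty product $\bar{x}_1\cdots\bar{x}_p$ evaluates to $1 \in \K$, the empty product $\prod_{j=1}^{q}(\snd{y_j}-\fst{y_j})$ equals $1$, and the ideal on the right is $0$; goodness (b) holds vacuously since the ideal $\langle\bar{y}_1,\ldots,\bar{y}_q\rangle$ is zero.

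For the inductive step, suppose we have constructed a good cocycle $f$ for a Sullivan algebra $\fdga{V}$, and consider a relative Sullivan algebra $(\wedge V \otimes \wedge x, d)$ with $\deg{x} \geq 2$. By Proposition~\ref{prop_ConstructionOfRelSullModel}, choosing $x'$ produces the relative Sullivan model $((\wedge V\otimes\wedge x)\tpow 2 \otimes (\wedge \overline{V}\otimes\wedge\bar{x}), d) \xrightarrow{\qis} (\wedge V \otimes \wedge x, d)$ for $\mu_{\wedge V \otimes \wedge x}$, so we can form $\Phi(f)$ as in Definition~\ref{defn_ConstructionOfShriek}. Lemma~\ref{lem_ChainMap} gives $d\Phi(f) = -\Phi(df) = 0$, so $\Phi(f)$ is a cocycle, and Proposition~\ref{prop_GoodnessInduction} asserts that $\Phi(f)$ is again good. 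Iterating this process through an ordered basis of $V$ yields a good cocycle on the full relative Sullivan model $(\wedge V\tpow 2 \otimes \wedge \overline{V}, d)$ of Corollary~\ref{cor_ConstructionOfRelSullModel}.

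There is no serious obstacle here: once the chain-map property of $\Phi$ and the preservation of goodness have been established, the only thing to verify is that the base case of the induction really is good, and that the order in which we adjoin generators is compatible with the order used to build the model in Corollary~\ref{cor_ConstructionOfRelSullModel}. The mildest subtlety is simply to note that one is free to choose any ordering of the basis $x_1,\ldots,x_p,y_1,\ldots,y_q$, since the constructions of Proposition~\ref{prop_ConstructionOfRelSullModel} and Definition~\ref{defn_ConstructionOfShriek} apply at each stage to whatever new generator is adjoined, whether even or odd.
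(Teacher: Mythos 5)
Your proposal is correct and follows exactly the paper's argument: the paper's proof is the one-line instruction ``Use Proposition~\ref{prop_GoodnessInduction} inductively on $\dim V$,'' and you have simply spelled out the (trivially good) base case $V=0$ and the inductive step via $\Phi$. No issues.
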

\begin{proof}
 Use \protect \MakeUppercase {P}roposition\nobreakspace \ref {prop_GoodnessInduction} inductively on $\dim V$.
\end{proof}

Now, we prove non-triviality of the cohomology class of a good cocycle.

\begin{thm}
 \label{thm_NonTrivialityOfShriek}
 Let $\fdga{V}$ be a semi-pure Sullivan algebra and
 \[
  m\colon (\wedge V\tpow 2\otimes \wedge \overline{V}, d) \xrightarrow{\qis} \fdga{V}
 \]
 a nice relative Sullivan model for $\mu_{\wedge V}$.
 Then
 \begin{enumerate}
  \item There is a cocycle $f \in \hom_{\wedge V\tpow 2}(\wedge V\tpow 2 \otimes \wedge \overline{V}, \wedge V\tpow 2)$ satisfying the condition (a) of \protect \MakeUppercase {D}efinition\nobreakspace \ref {defn_good}.
  \item Let $f$ be a cocycle as in (1).
	Then the cohomology class
	\[
	 [f] \in \cohom{\hom_{\wedge V\tpow 2}(\wedge V\tpow 2\otimes \wedge \overline{V}, \wedge V\tpow 2)} = \ext_{\wedge V\tpow 2}(\wedge V, \wedge V\tpow 2)
	\]
	is non-trivial.
	In particular, this cocycle represents the shriek map $\shriek \in \ext_{\wedge V\tpow 2}(\wedge V, \wedge V\tpow 2)$.
 \end{enumerate}
\end{thm}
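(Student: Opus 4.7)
For part (1), let $f$ be a good cocycle, whose existence is furnished by Corollary~\ref{cor_Goodness}; since ``good'' implies condition~(a), such an $f$ works.

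The plan for part (2) combines a dimension count with a direct non-vanishing argument. By Theorem~\ref{thm_FinDimImplyGorensteinAlg}, $\fdga{V}$ is Gorenstein, and an algebraic analog of Theorem~\ref{thm_ExtDiagonal} identifies $\ext^*_{\wedge V\tpow{2}}(\wedge V, \wedge V\tpow{2}) \cong \cohom[*-m]{\wedge V}$, where $m := \sum_j \deg{y_j} - \sum_i(\deg{x_i} - 1)$. In particular $\ext^m_{\wedge V\tpow{2}}(\wedge V, \wedge V\tpow{2}) \cong \K$ is generated by $\shriek$. A direct degree count shows $\deg f = m$, so once $[f] \neq 0$ is established, $[f]$ is automatically a nonzero scalar multiple of $\shriek$.

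To prove $[f] \neq 0$, I argue by contradiction: suppose $f = d h$ and evaluate at $\omega := \bar{x}_1 \cdots \bar{x}_p$:
\[
 f(\omega) = d(h(\omega)) - (-1)^{\deg h} h(d\omega).
\]
Niceness of the relative Sullivan model (Proposition~\ref{prop_NiceRelSullModel}, Corollary~\ref{cor_NiceRelSullModel}) forces each $x'_i \in \bar{I}_{V,i}$, so $d\bar{x}_i$ carries no $\bar{x}_i$-factor and lies in $\bar{I}_V$. Expanding $d\omega$ and repeatedly using $\bar{x}_k^2 = 0$, every summand of $d\omega$ contains some $\fst{x_k}$ or $\snd{x_k}$ as a factor; by $\wedge V\tpow{2}$-linearity of $h$, this forces $h(d\omega) \in I_{V\tpow{2}} := \ideal{\fst{x_k}, \snd{x_k}}{\wedge V\tpow{2}}$. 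Semi-purity yields $d(I_{V\tpow{2}}) \subset I_{V\tpow{2}}$, so there is an induced differential $\bar{d}$ on the quotient $\wedge V\tpow{2}/I_{V\tpow{2}} \cong (\wedge V^{\rm odd})\tpow{2}$, and reducing the displayed identity modulo $I_{V\tpow{2}}$ gives
\[
 \pm\prod_j(\snd{y_j} - \fst{y_j}) + \bar{u} = \bar{d}\,\overline{h(\omega)} \quad \text{in}\ ((\wedge V^{\rm odd})\tpow{2}, \bar{d}).
\]
It then suffices to show the left-hand side is cohomologically nontrivial. By K\"unneth (since $\bar{d}$ respects the tensor decomposition) and the niceness quasi-isomorphism $m''$, $H((\wedge V^{\rm odd})\tpow{2}, \bar{d}) \cong H(\wedge V/I_V)^{\otimes 2}$; a filtration on $(\wedge V^{\rm odd})\tpow{2}$ by the number of ``diagonal pairs'' $\fst{y_j}\snd{y_j}$ then isolates $\pm\prod_j(\snd{y_j} - \fst{y_j})$ as the leading term on the associated graded, representing the ``Euler class of the diagonal'' for the Gorenstein algebra $\wedge V/I_V$, which is nonzero.

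\textbf{Main obstacle.} The delicate step is the final filtration/K\"unneth argument: in the semi-pure (non-pure) case, the induced differential $\bar{d}$ on $\wedge V/I_V$ may not vanish, so one must verify carefully that the filtration is $\bar{d}$-compatible and that the Euler-class leading term persists nontrivially on the associated graded. This ultimately rests on the Gorenstein structure of $\wedge V/I_V$ together with a Poincar\'e-duality-style computation, generalizing Naito's pure-case argument in \cite{naito13}.
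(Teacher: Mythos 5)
Your part (1) and the skeleton of part (2) are sound and run parallel to the paper's argument: the paper also evaluates at $\bar{x}_1\cdots\bar{x}_p$ and uses niceness condition (b) together with $\bar{x}_i^2=0$ to control the differential, packaging this as the statement that $\bar{x}_1\cdots\bar{x}_p\otimes 1$ is a cocycle in $(\wedge V\tpow{2}\otimes\wedge\overline{V})\otimes_{\wedge V\tpow{2}}(\wedge V/I_V)$ and then pairing $[f]$ with this $\tor$ class via an evaluation map, rather than arguing by contradiction with $f=dh$; the underlying computation is the same, and your verification that $h(d\omega)$ lands in the ideal generated by the $\fst{x_k},\snd{x_k}$ is correct. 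The genuine gap is your final step. You stop in the quotient $(\wedge V^{\mathrm{odd}})\tpow{2}$ and must show that $\pm\prod_j(\snd{y_j}-\fst{y_j})+\bar{u}$ is not a coboundary there. Your proposed filtration by the number of diagonal pairs $\fst{y_j}\snd{y_j}$ is not compatible with the induced differential in the semi-pure case: if, say, $dy_2\equiv y_1y_3y_4$ modulo $I_V$, then $\bar{d}(\fst{y_2}\snd{y_2})$ has the summand $\fst{y_1}\fst{y_3}\fst{y_4}\snd{y_2}$, which contains no diagonal pair, so the ideal $(\fst{y_j}\snd{y_j})_j$ is not a differential ideal and the associated graded argument does not get off the ground. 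Moreover the ``Euler class is nonzero'' heuristic invokes the wrong invariant: Euler classes are precisely what vanish in the situations motivating \protect \MakeUppercase {T}heorem\nobreakspace \ref {thm_TrivTamanoi}; what you need is the diagonal class $\shriek(1)$, not $\Delta^*\shriek(1)$. As written, the concluding step would fail.

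The fix is one further reduction, which is exactly what the paper does. Compose with the chain map induced by $\varepsilon\cdot\id\colon \wedge V\tpow{2}\to\wedge V$ (kill the first tensor factor), which descends to $(\wedge V/I_V)\tpow{2}\to \wedge V/I_V$. It sends $\prod_j(\snd{y_j}-\fst{y_j})$ to $y_1\cdots y_q$ and annihilates $\bar{u}$, since every term of $u$ contains a factor $\fst{y_j}$. One is then left with showing $[y_1\cdots y_q]\neq 0$ in $\cohom{\wedge V/I_V}$, which is a pure degree count: because $V^1=0$, each $\deg{y_j}\geq 3$, so $\wedge V^{\mathrm{odd}}$ is zero in degree $\deg{y_1\cdots y_q}-1$ and the top monomial cannot be a coboundary. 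With that replacement your argument closes and essentially recovers the paper's proof; the rest of your write-up (the identification of $[f]$ with $\shriek$ once nonvanishing is known, via the one-dimensionality of the relevant $\ext$) is consistent with the paper's ``in particular'' clause.
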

\begin{proof}
 (1)
 This is an immediate consequence of \protect \MakeUppercase {C}orollary\nobreakspace \ref {cor_Goodness}.

 (2)
 Since $\fdga{V}$ is semi-pure, the quotient dga $(\wedge V / I_V, d)$ can be defined.
 Recall the dga homomorphism $\varepsilon\cdot\id\colon \fdga{V}\tpow{2} \rightarrow \fdga{V}$, defined by $(\varepsilon\cdot\id)(1\otimes v)=v$ and $(\varepsilon\cdot\id)(v\otimes 1)=0$ for $v \in V$.
 We denote by $\pr\colon \fdga{V} \rightarrow (\wedge V / I_V, d)$ the quotient homomorphism.
 Consider the evaluation map
 \[
  \ev\colon \ext_{\wedge V\tpow{2}}(\wedge V, \wedge V\tpow{2}) \otimes \tor_{\wedge V\tpow{2}}(\wedge V, \wedge V / I_V) \rightarrow \tor_{\wedge V\tpow{2}}(\wedge V\tpow{2}, \wedge V / I_V),
 \]
 where
 $(\wedge V, d)\tpow{2}$, $(\wedge V, d)$, and $(\wedge V / I_V, d)$
 are $(\wedge V, d)\tpow{2}$-module via
 $\id$, $\varepsilon\cdot\id$, and $\pr$, respectively.
 To calculate $\ext$ and $\tor$, use the relative Sullivan model $m$ as the semi-free resolution of $\fdga{V}$.
 Then $\bar{x}_1\cdots\bar{x}_p\otimes 1$ is a cocycle in
 $(\wedge V\tpow{2} \otimes \wedge \overline{V}, d) \otimes_{\wedge V\tpow{2}} (\wedge V / I_V, d)$
 by the property (b) of a nice relative Sullivan model (see \protect \MakeUppercase {D}efinition\nobreakspace \ref {defn_NiceRelSullModel}).
 Take $u \in \ideal{\fstandsnd{y_1}{y_1}, \cdots , \fstandsnd{y_q}{y_q}}{\wedge V\tpow 2}$ such that
 \[
  f(\bar{x}_1\cdots\bar{x}_p) = \prod_{j=1}^{q}(\snd{y_j} - \fst{y_j}) + u,
 \]
 replacing $f$ by $-f$ if necessary.
 Consider the element
 \[
  \ev([f] \otimes [\bar{x}_1\cdots\bar{x}_p\otimes 1]) = [f(\bar{x}_1\cdots\bar{x}_p)\otimes 1] \in \tor_{\wedge V\tpow{2}}(\wedge V\tpow{2}, \wedge V / I_V).
 \]
 Since $\fst{y_j}$ and $u$ are in the kernel of $\varepsilon\cdot\id$,
 this element is mapped to $[y_1\cdots y_q] \in \cohom{\wedge V / I_V}$ by the canonical isomorphism
 \[
 \tor_{\wedge V\tpow{2}}(\wedge V\tpow{2}, \wedge V / I_V) \cong \cohom{\wedge V / I_V}.
 \]
 By the assumption that $V^1 = 0$, we have $[y_1\cdots y_q] \neq 0$.
 Hence we have
 \[
  [f] \neq 0 \in \ext_{\wedge V\tpow{2}}(\wedge V, \wedge V\tpow{2}).
 \]
 This proves the theorem.
\end{proof}

\begin{rmk}
 By a similar and somehow easier method, we can construct the non-trivial element in $\ext_{\wedge V}(\K, \wedge V)$, which appears in the definition of a Gorenstein algebra.
\end{rmk}

We need the following proposition in the proof of \protect \MakeUppercase {T}heorem\nobreakspace \ref {thm_main} in Section\nobreakspace \ref {sect_ProofOfMainThm}.

\begin{prop}
 \label{prop_ManyXBars}
 %Assume $\fdga{V}$ is a Sullivan algebra satisfying $\dim V < \infty$ and $V^1 = 0$.
 Let $(\fga[2]{V}\otimes\fga{\overline{V}}, d)$ be a relative Sullivan algebra constructed by \protect \MakeUppercase {C}orollary\nobreakspace \ref {cor_ConstructionOfRelSullModel},
 and $\repshriek \in \hom_{\fga[2]{V}}(\fga[2]{V}\otimes\fga{\overline{V}}, \fga[2]{V})$ a good cocycle constructed by \protect \MakeUppercase {C}orollary\nobreakspace \ref {cor_Goodness}.
 %Define $p = \dim V^\mathrm{even}$ and $q = \dim V^\mathrm{odd}$.
 Then
 \[
  \mu_{\fga{V}} \circ \repshriek (\fga[2]{V} \otimes \wedge^n \overline{V^\mathrm{even}}) = 0
 \]
 for $n > p-q$.
\end{prop}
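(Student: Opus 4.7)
The plan is to prove the proposition by induction on $\dim V$, running in parallel with the inductive construction of $\delta_!$ from Corollary~\ref{cor_Goodness} via the operators $\Phi$ of Definition~\ref{defn_ConstructionOfShriek}. Because $\delta_!$ is $\fga[2]{V}$-linear and $\mu_{\fga{V}}$ is an algebra homomorphism, it suffices to verify
\[
 \mu_{\fga{V}}\bigl(\delta_!(\bar{x}_{i_1}\cdots\bar{x}_{i_n})\bigr) = 0
\]
for every strictly increasing tuple $i_1<\cdots<i_n$ with $n>p-q$. The base case $V=0$ is vacuous since $\overline{V^{\mathrm{even}}}=0$.

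For the inductive step, adjoin a generator $t$ of degree $\geq 2$ to $V$. If $\deg{t}$ is even, the parameters become $(p+1,q)$, the condition reads $n > p-q+1$, and Definition~\ref{defn_ConstructionOfShriek}(2) yields $\delta_!^{\mathrm{new}}(u)=0$ together with $\delta_!^{\mathrm{new}}(u\bar{t}) = \pm \delta_!^{\mathrm{old}}(u)$. A monomial of $n$ new $\bar{x}$'s either omits $\bar{t}$ (killed outright) or, after anticommuting $\bar{t}$ to the right, reduces to $\pm\delta_!^{\mathrm{old}}$ applied to a product of $n-1>p-q$ old $\bar{x}$'s, at which point the inductive hypothesis closes the case.

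The odd case carries the real content. With new parameters $(p,q+1)$ the condition relaxes to $n \geq p-q$, and Definition~\ref{defn_ConstructionOfShriek}(1) gives
\[
 \delta_!^{\mathrm{new}}(\bar{x}_{i_1}\cdots\bar{x}_{i_n}) = (\snd{t}-\fst{t})\,\delta_!^{\mathrm{old}}(\bar{x}_{i_1}\cdots\bar{x}_{i_n}) \pm \delta_!^{\mathrm{old}}\bigl(t'\,\bar{x}_{i_1}\cdots\bar{x}_{i_n}\bigr).
\]
Since $\mu_{\fga{V}}(\snd{t}-\fst{t})=0$, the first term drops out under $\mu_{\fga{V}}$. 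For the second term I would decompose $t' = t'_0 + t'_+$ with $t'_0\in\fga[2]{V}$ and $t'_+ \in \fga[2]{V}\otimes \wedge^{\geq 1}\overline{V}$. Because the quasi-isomorphism $m$ sends every generator of $\overline{V}$ to zero and restricts to $\mu_{\fga{V}}$ on $\fga[2]{V}$, the relation $mt'=0$ forces $\mu_{\fga{V}}(t'_0)=m(t')=0$; together with the $\fga[2]{V}$-linearity of $\delta_!^{\mathrm{old}}$ and the multiplicativity of $\mu_{\fga{V}}$, this kills the $t'_0$-contribution. For each monomial $\alpha\beta$ of $t'_+$ with $\beta\in\wedge^{\geq 1}\overline{V}$, either $\beta$ has a $\bar{y}_l$ factor (killed by goodness condition (b) of Definition~\ref{defn_good}), or $\beta$ shares a generator with $\bar{x}_{i_1}\cdots\bar{x}_{i_n}$ (so the product vanishes by $\bar{x}_{i_l}^2=0$), or $\beta = \bar{x}_{j_1}\cdots\bar{x}_{j_m}$ with $m\geq 1$ and $\{j_1,\ldots,j_m\}$ disjoint from $\{i_1,\ldots,i_n\}$, producing a monomial of $n+m \geq (p-q)+1$ old $\bar{x}$'s on which the inductive hypothesis applies.

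The main obstacle, I expect, is precisely the edge case $n=p-q$ in the odd step: without extra information on $t'$, the bookkeeping would be off by one. The identity $\mu_{\fga{V}}(t'_0)=0$, extracted directly from $mt'=0$ together with $m(\overline{V})=0$, is exactly what bridges the gap.
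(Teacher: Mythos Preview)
Your proof is correct and follows essentially the same approach as the paper: induction on $\dim V$ in parallel with the construction of $\delta_!$ via $\Phi$, with the odd step handled by splitting $t'$ into its $\fga[2]{V}$-component (lying in $\ker\mu_{\fga{V}}$ since $mt'=0$) and higher terms in $\overline{V}$, then disposing of the latter via goodness~(b), $\bar{x}_i^2=0$, and the inductive hypothesis. The paper's write-up differs only cosmetically: it records the decomposition as $x' = a + \sum b_{j_1\cdots j_l}\bar{x}_{j_1}\cdots\bar{x}_{j_l} + \sum c_k\bar{y}_k$ with $a\in\ker\mu_{\fga{W}}$ (your $t'_0$), and applies the inductive hypothesis directly to the $(l+n)$-fold product without separating out the overlapping-index case, since $\bar{x}_i^2=0$ makes those terms vanish anyway.
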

\begin{proof}
 It is enough to show
 $\mu_{\fga{V}} \circ \repshriek(\bar{x}_{i_1}\cdots\bar{x}_{i_n}) = 0$
 for $i_1,\ldots, i_n \in \{1,\ldots,p\}$ and $n > p-q$.
 We prove this by induction on $\dim V$.
 This is trivial when $\dim V = 0$.
 Assume $\dim V > 0$ and this is proved when $V$ has less dimension.
 By the assumptions, we can take a direct sum decomposition
 $V = W \oplus \K x$
 such that $\fdga{V} = (\fga{W}\otimes\fga{x}, d)$ is a relative Sullivan algebra over a Sullivan algebra $\fdga{W}$,
 $\repshriek' \in \hom_{\fga[2]{W}}(\fga[2]{W}\otimes\fga{\overline{W}}, \fga[2]{W})$
 with
 $\repshriek = \Phi(\repshriek')$,
 and a good relative Sullivan model $m\colon (\fga[2]{W}\otimes\fga{\overline{W}},d) \xrightarrow{\qis} \fdga{W}$ of $\mu_{\fga{W}}$.
 
 First, consider the case $\deg{x}$ is even.
 We may assume
 $x = x_p$,
 $W^\mathrm{even} = \K\{x_1,\ldots, x_{p-1}\}$, and
 $W^\mathrm{odd} = V^\mathrm{odd} = \K\{y_1,\ldots, y_q\}$.
 If $i_k \leq p-1$ for all $k$,
 \[
 \repshriek(\bar{x}_{i_1}\cdots\bar{x}_{i_n})
 = \Phi(\repshriek')(\bar{x}_{i_1}\cdots\bar{x}_{i_n})
 = 0
 \]
 by the definition of $\Phi$.
 If $i_k = p$ for some $k$, say $n$,
 \[
 \mu_{\fga{V}}\repshriek(\bar{x}_{i_1}\cdots\bar{x}_{i_{n-1}}\bar{x})
 = \pm \mu_{\fga{W}}\repshriek'(\bar{x}_{i_1}\cdots\bar{x}_{i_{n-1}})
 = 0
 \]
 since $n-1 > (p-1) - q$.
 
 Next, consider the case $\deg{x}$ is odd.
 We may assume $x=y_q$, $W^\mathrm{even}=V^\mathrm{odd}$, and $W^\mathrm{odd}=\K\{y_1,\ldots, y_q\}$.
 Let $n > p-q$.
 Take $x' \in (\fga[2]{W}\otimes\fga{\overline{W}}, d)$ as in \protect \MakeUppercase {P}roposition\nobreakspace \ref {prop_ConstructionOfRelSullModel} (1).
 Since $\mu_{\fga{V}}(\snd{x}-\fst{x})=0$,
 \begin{eqnarray*}
  \mu_{\fga{V}}\repshriek(\bar{x}_{i_1}\cdots\bar{x}_{i_n})
   &=& \mu_{\fga{V}}((\snd{x}-\fst{x})\repshriek'(\bar{x}_{i_1}\cdots\bar{x}_{i_n}))
   \pm \mu_{\fga{V}}\repshriek'(x'\bar{x}_{i_1}\cdots\bar{x}_{i_n})\\
  &=& \pm \mu_{\fga{V}}\repshriek'(x'\bar{x}_{i_1}\cdots\bar{x}_{i_n}).
 \end{eqnarray*}
 Since $\ker m = (\ker \mu_{\fga{W}})\oplus(\fga[2]{V}\otimes\wedge^+\overline{V})$,
 the element $x'\in\ker m$ can be written as
 \[
  x' = a + \sum_{l \geq 1}\sum_{j_1,\ldots,j_l}b_{j_1\cdots j_l}\bar{x}_{j_1}\cdots\bar{x}_{j_l} + \sum_k c_k\bar{y}_k,
 \]
 where $a \in \ker \mu_{\fga{W}}$, $b_{j_1\cdots j_l} \in \fga[2]{W}$, and $c_k \in \fga[2]{W}\otimes\fga{\overline{W}}$.
 Since $\repshriek'$ is $\fga[2]{W}$-linear,
 \[
 \begin{split}
  \repshriek'(x'\bar{x}_{i_1}\cdots\bar{x}_{i_n})
  = -a\repshriek'(\bar{x}_{i_1}\cdots\bar{x}_{i_n})
  &+ \sum_{l \geq 1}\sum_{j_1,\ldots,j_l}\pm b_{j_1\cdots j_l}\repshriek'(\bar{x}_{j_1}\cdots\bar{x}_{j_l}\bar{x}_{i_1}\cdots\bar{x}_{i_n})\\
  &+ \sum_k \repshriek'(c_k\bar{y}_k\bar{x}_{i_1}\cdots\bar{x}_{i_n}).
 \end{split}
 \]
% Then, we have
% $\mu_{\fga{W}}(a\repshriek'(\bar{x}_{i_1}\cdots\bar{x}_{i_n}))=0$ by $a \in \ker \mu_{\fga{W}}$,
% $\mu_{\fga{W}}(\repshriek'(\bar{x}_{j_1}\cdots\bar{x}_{j_l}\bar{x}_{i_1}\cdots\bar{x}_{i_n})) = 0$ by $l+n \geq 1+n > p-(q-1)$, and
% $\repshriek'(c_k\bar{y}_k\bar{x}_{i_1}\cdots\bar{x}_{i_n})=0$ by the goodness of $\repshriek'$.
 Then, we have
 \[
  \mu_{\fga{W}}(a\repshriek'(\bar{x}_{i_1}\cdots\bar{x}_{i_n})) = \mu_{\fga{W}}(\repshriek'(\bar{x}_{j_1}\cdots\bar{x}_{j_l}\bar{x}_{i_1}\cdots\bar{x}_{i_n})) = \repshriek'(c_k\bar{y}_k\bar{x}_{i_1}\cdots\bar{x}_{i_n}) = 0
 \]
 by $a \in \ker \mu_{\fga{W}}$,
 $l+n \geq 1+n > p-(q-1)$, and
 the goodness of $\repshriek'$, respectively.
 Hence
 \[
  \mu_{\fga{V}}(\repshriek'(x'\bar{x}_{i_1}\cdots\bar{x}_{i_n}))
  = \mu_{\fga{W}}(\repshriek'(x'\bar{x}_{i_1}\cdots\bar{x}_{i_n}))
  = 0.
 \]
 This completes the induction and proves the proposition.
\end{proof}

\section{Proof of \protect \MakeUppercase {T}heorem\nobreakspace \ref {thm_main}}
\label{sect_ProofOfMainThm}
In this section, we assume $\fdga{V}$ is a Sullivan algebra satisfying $\dim V < \infty$ and $V^1 = 0$.
%We will prove the main theorem using the results in the Section\nobreakspace \ref {sect_ConstructionOfShriek}.

Let $\mpath = \mpathv{V}$ be the dga $(\fga[2]{V}\otimes\fga{\overline{V}}, d)$ in \protect \MakeUppercase {C}orollary\nobreakspace \ref {cor_ConstructionOfRelSullModel},
and let $\mloop = \mloopv{V}$ denote $\fdga{V} \otimes_{\fga[2]{V}} \mpathv{V}$.
Consider the surjective quasi-isomorphism
\[
 \bar{\varepsilon}\otimes\id\colon \mpath \otimes_{\fga[2]{V}} \mloop[2]
 \xtwoheadrightarrow{\qis}
 \fdga{V} \otimes_{\fga[2]{V}} \mloop[2],
\]
which appears in the description of $\dlp$.
For $i=1,2$,
 we denote by $\iota_i$ the inclusion map from $\mloop$ into $\mpath \otimes_{\fga[2]{V}} \mloop[2]$,
 by $\iota'_i$ the inclusion map from $\mloop$ into $\fdga{V} \otimes_{\fga[2]{V}} \mloop[2]$ as the $i$-th factor of tensor products, 
 and by $\iota_0$ the inclusion map from $\mpath$ into $\mpath \otimes_{\fga[2]{V}} \mloop[2]$.

We need some lemmas to prove \protect \MakeUppercase {T}heorem\nobreakspace \ref {thm_main}.

\begin{lem}
 \label{lem_section}
 Assume that there is a direct sum decomposition $V = W \oplus \K x$ such that $\fdga{V} = (\fga{W} \otimes \fga{x}, d)$ is a relative Sullivan algebra with base $\fdga{W}$.
 Let
 \[
 \psi_0\colon \fdga{W} \otimes_{\fga[2]{W}} \mloopv[2]{W}
 \xrightarrow{\qis}
 \mpathv{W} \otimes_{\fga[2]{W}} \mloopv[2]{W}
 \]
 be a dga homomorphism making the diagram
 \[
 \xymatrix{
 \mloopv{W} \ar[d]^{\iota'_1} \ar[r]^-{\iota_1} & \mpathv{W} \otimes_{\fga[2]{W}} \mloopv[2]{W} \ar@{->>}[d]^{\bar{\varepsilon}\otimes\id}_\qis\\
 \fdga{W} \otimes_{\fga[2]{W}} \mloopv[2]{W} \ar[r]^-= \ar[ur]^-{\psi_0} & \fdga{W} \otimes_{\fga[2]{W}} \mloopv[2]{W}
 }
 \]
 commutative.
 Then there is a dga homomorphism
 \[
 \psi\colon
 \fdga{V} \otimes_{\fga[2]{V}} \mloopv[2]{V}
 \xrightarrow{\qis}
 \mpathv{V} \otimes_{\fga[2]{V}} \mloopv[2]{V}
 \]
 such that
 \begin{enumerate}
  \item \label{item_restriction}
       the restriction of $\psi$ to $\fdga{W} \otimes_{\fga[2]{W}} \mloopv[2]{W}$ equals to $\psi_0$,
  \item \label{item_xandy}
       there is an element
       $y\in \mpathv{W} \otimes_{\fga[2]{W}} \mloopv[2]{W}$
       satisfying
       $\psi\iota'_2(\bar{x}) = \iota_2(\bar{x}) + y$
       and
       $(\bar{\varepsilon}\otimes\id)y = 0$, and
  \item \label{item_diagram}
       the diagram
       \[
       \xymatrix{
       \mloopv{V} \ar[d]^{\iota'_1} \ar[r]^-{\iota_1} & \mpathv{V} \otimes_{\fga[2]{V}} \mloopv[2]{V} \ar@{->>}[d]^{\bar{\varepsilon}\otimes\id}_\qis\\
       \fdga{V} \otimes_{\fga[2]{V}} \mloopv[2]{V} \ar[r]^-= \ar[ur]^-{\psi} & \fdga{V} \otimes_{\fga[2]{V}} \mloopv[2]{V}
       }
       \]
	commutes.
 \end{enumerate}
 In particular, the map $\psi$ satisfies
 \[
  \im\psi \subset (\fga[2]{V} \otimes \wedge \overline{W}) \otimes_{\fga[2]{V}} \mloopv[2]{V}.
 \]
\end{lem}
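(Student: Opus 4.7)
The proof proceeds by extending $\psi_0$ across the single new generator $x$ of $V = W \oplus \K x$. Writing $\fga{V} = \fga{W}\otimes\fga{x}$, both $\mpathv{V}\otimes_{\fga[2]{V}}\mloopv[2]{V}$ and $\fdga{V}\otimes_{\fga[2]{V}}\mloopv[2]{V}$ contain their $W$-counterparts as subalgebras. The new generators are $x, \fst{\bar{x}}, \snd{\bar{x}}$ on the source side and $\fst{x}, \snd{x}, \bar{x}, \fst{\bar{x}}, \snd{\bar{x}}$ on the target side. On the $W$-subalgebra I set $\psi = \psi_0$, which handles (\ref{item_restriction}); the upper triangle of (\ref{item_diagram}) then forces $\psi(x) = \iota_1(x) = \fst{x}$ and $\psi(\fst{\bar{x}}) = \iota_1(\bar{x}) = \fst{\bar{x}}$. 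The only nontrivial task is to define $\psi(\snd{\bar{x}})$.

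Guided by (\ref{item_xandy}), I write $\psi(\snd{\bar{x}}) = \snd{\bar{x}} + y$ with $y\in\mpathv{W}\otimes_{\fga[2]{W}}\mloopv[2]{W}$ and $(\bar{\varepsilon}\otimes\id)y = 0$. By \protect \MakeUppercase {P}roposition\nobreakspace \ref {prop_ConstructionOfRelSullModel}, $d\bar{x} = \snd{x}-\fst{x}-x'$ for some $x' \in \mpathv{W}$. In $\mloopv{V}$ the difference $\snd{x}-\fst{x}$ collapses under $\otimes_{\fga[2]{V}}$, leaving $d\bar{x} = -\tilde{x}'$, where $\tilde{x}'$ is the image of $x'$ in $\mloopv{W}\subset\mloopv{V}$. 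Consequently $d\snd{\bar{x}} = -\snd{\tilde{x}'}$ in both the source and the target of $\psi$, and the dga compatibility $d\psi(\snd{\bar{x}}) = \psi(d\snd{\bar{x}})$ reduces to
\[
 dy = \snd{\tilde{x}'} - \psi_0(\snd{\tilde{x}'}).
\]

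The main step is producing such a $y$. The right-hand side lies in $\mpathv{W}\otimes_{\fga[2]{W}}\mloopv[2]{W}$; it is a cocycle because $d\tilde{x}' = -d^2\bar{x} = 0$ and $\psi_0$ commutes with $d$; and it lies in $\ker(\bar{\varepsilon}\otimes\id)$ because $(\bar{\varepsilon}\otimes\id)\psi_0 = \id$ by hypothesis. I then apply \protect \MakeUppercase {L}emma\nobreakspace \ref {lem_CoboundByKernel} to the surjective quasi-isomorphism $\bar{\varepsilon}\otimes\id\colon \mpathv{W}\otimes_{\fga[2]{W}}\mloopv[2]{W}\twoheadrightarrow\fdga{W}\otimes_{\fga[2]{W}}\mloopv[2]{W}$ (the $W$-analogue of the quasi-isomorphism discussed at the start of this section) to obtain a $y$ with the required properties.

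All remaining verifications are immediate generator-level checks: (\ref{item_restriction}) holds by construction, (\ref{item_xandy}) is the chosen form of $\psi(\snd{\bar{x}})$, and (\ref{item_diagram}) reduces to the forced values of $\psi$ on first-factor generators for the upper triangle, and to $(\bar{\varepsilon}\otimes\id)\psi_0 = \id$ combined with $(\bar{\varepsilon}\otimes\id)(\fst{x}) = x$, $(\bar{\varepsilon}\otimes\id)(\fst{\bar{x}}) = \fst{\bar{x}}$, $(\bar{\varepsilon}\otimes\id)(\snd{\bar{x}}+y) = \snd{\bar{x}}$ for the lower triangle. Finally, the concluding inclusion holds because $\psi_0$ takes values in $\mpathv{W}\otimes_{\fga[2]{W}}\mloopv[2]{W}$ and the additional values $\fst{x}, \fst{\bar{x}}, \snd{\bar{x}}+y$ contain no occurrence of the generator $\bar{x}\in\mpathv{V}$, so $\im\psi\subset(\fga[2]{V}\otimes\wedge\overline{W})\otimes_{\fga[2]{V}}\mloopv[2]{V}$.
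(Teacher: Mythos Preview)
Your argument is correct and follows essentially the same route as the paper: the paper also observes that conditions (\ref{item_restriction}) and (\ref{item_diagram}) determine $\psi$ everywhere except on $\iota'_2(\bar{x})$, sets $\alpha = \psi_0\, d\iota'_2(\bar{x}) - d\iota_2(\bar{x})$ (which, after your explicit computation $d\bar{x}=-\tilde{x}'$ in $\mloop$, is exactly your right-hand side $\iota_2(\tilde{x}') - \psi_0\iota'_2(\tilde{x}')$), checks $d\alpha=0$ and $(\bar{\varepsilon}\otimes\id)\alpha=0$, and then applies \protect \MakeUppercase {L}emma\nobreakspace \ref {lem_CoboundByKernel} to obtain $y$. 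The only cosmetic point is that your notation $\snd{\tilde{x}'}$ does double duty for $\iota_2(\tilde{x}')$ in the target and $\iota'_2(\tilde{x}')$ in the source; this is harmless here but worth disambiguating.
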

\begin{proof}
 By the conditions (\ref{item_restriction}) and (\ref{item_diagram}),
 $\psi$ is already defined on $\fga{W}\otimes_{\fga[2]{W}}\mloopv[2]{W}$ and $\iota'_1(\mloopv[2]{V})$.
 Hence it is enough to define the image of $\iota'_2(\bar{x})$ to construct $\psi$.
 %Since $d\bar{x}$, the image of $\bar{x}\in \mloopv{V}$, is an element of $\mloopv{W}$,
 For the element $\bar{x}\in \mloopv{V}$, we have $d\bar{x}\in\mloopv{W}$.
 Hence we have $d\iota'_2(\bar{x}) \in \fga{W} \otimes_{\fga[2]{W}} \mloopv[2]{W}$
 and $d\iota_2(\bar{x}) \in \mpathv{W} \otimes_{\fga[2]{W}} \mloopv[2]{W}$.
 Then the element
 \[
  \alpha = \psi_0 d \iota'_2(\bar{x}) - d\iota_2(\bar{x}) \in \mpathv{W} \otimes_{\fga[2]{W}} \mloopv[2]{W}
 \]
 satisfies
 $d\alpha = 0$
 and
 $(\bar\varepsilon\otimes\id)\alpha=0$.
 Hence, by \protect \MakeUppercase {L}emma\nobreakspace \ref {lem_CoboundByKernel}, there is an element
 $y \in \mpathv{W} \otimes_{\fga[2]{W}} \mloopv[2]{W}$
 satisfying $dy=\alpha$ and $(\bar\varepsilon\otimes\id)y = 0$.
 Define $\psi$ by $\psi(\iota'_2(\bar{x})) = \iota_2(\bar{x}) + y$.
 Then $\psi$ is a dga homomorphism and satisfies the conditions.
\end{proof}

\begin{lem}
 \label{lem_TrivialityOfProduct}
 If there are a dga homomorphism
 \[
 \psi\colon
 \fdga{V} \otimes_{\fga[2]{V}} \mloop[2]
 \rightarrow
 \mpath \otimes_{\fga[2]{V}} \mloop[2]
 \]
 and a representative $\repshriek\in\hom_{\fga[2]{V}}(\mpath, \fga[2]{V})$ of $\shriek$ satisfying
 \[
 (\bar{\varepsilon}\otimes\id)\circ\psi = \id
 \colon \fdga{V} \otimes_{\fga[2]{V}} \mloop[2]
 \rightarrow \fdga{V} \otimes_{\fga[2]{V}} \mloop[2]
 \]
 and
 \[
  (\repshriek\otimes\id)\circ\psi = 0
 \colon \fdga{V} \otimes_{\fga[2]{V}} \mloop[2]
 \rightarrow \fdga[2]{V} \otimes_{\fga[2]{V}} \mloop[2],
 \]
 then $\dlp_{\fga{V}}$ is trivial.
\end{lem}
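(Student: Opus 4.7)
The plan is to realise the formula for $\dlp_{\fga{V}}$ recalled in Section~\ref{sect_StringTopology} at the chain level, use the given map $\psi$ as an on-the-nose right inverse for the quasi-isomorphism $\bar\varepsilon\otimes\id$ that appears in that formula, and then invoke the hypothesis $(\repshriek\otimes\id)\circ\psi = 0$ to conclude that the chain-level composite, and hence the induced map on cohomology, is zero.

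Concretely, I would first package the initial portion of the description of $\dlp_{\fga{V}}$,
\[
 \mloop \xrightarrow{\cong} \fdga{V}\otimes_{\fga[2]{V}}\mpath \xleftarrow[\qis]{\bar\varepsilon\otimes\id} \mpath\otimes_{\fga[2]{V}}\mpath \xrightarrow{(\mu\otimes\id)\otimes_\mu(\mu\otimes\id)} \mloop\otimes_{\fga{V}}\mloop \xrightarrow{\cong} \fdga{V}\otimes_{\fga[2]{V}}\mloop[2],
\]
into a single cohomology-level map $\Theta\colon \cohom{\mloop}\to \cohom{\fdga{V}\otimes_{\fga[2]{V}}\mloop[2]}$. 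The remaining portion of the description is the zigzag
\[
 \fdga{V}\otimes_{\fga[2]{V}}\mloop[2] \xleftarrow[\qis]{\bar\varepsilon\otimes\id} \mpath\otimes_{\fga[2]{V}}\mloop[2] \xrightarrow{\repshriek\otimes\id} \fga[2]{V}\otimes_{\fga[2]{V}}\mloop[2] \xrightarrow{\cong} \mloop[2],
\]
so that $\dlp_{\fga{V}}$ is the composition of $\Theta$ with the cohomology map $\cohom{\repshriek\otimes\id}\circ \cohom{\bar\varepsilon\otimes\id}^{-1}$.

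Then, since $\psi$ is a chain map satisfying $(\bar\varepsilon\otimes\id)\circ\psi = \id$, the induced map $\cohom{\psi}$ is a right inverse of the isomorphism $\cohom{\bar\varepsilon\otimes\id}$ and therefore equals $\cohom{\bar\varepsilon\otimes\id}^{-1}$. Substituting this gives
\[
 \dlp_{\fga{V}} \;=\; \cohom{(\repshriek\otimes\id)\circ\psi}\circ\Theta,
\]
which vanishes by the second hypothesis, since the chain map $(\repshriek\otimes\id)\circ\psi$ is identically zero.

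I expect the only real obstacle to be purely notational bookkeeping: one must confirm that every unlabelled arrow in the description of $\dlp_{\fga{V}}$ is a genuine chain map (not merely a cohomology-level map) so that the substitution of $\psi$ for the formal inverse of $\bar\varepsilon\otimes\id$ is legitimate. This is immediate from the explicit formulas of \cite{naito13} recalled in Section~\ref{sect_StringTopology}, after which the argument reduces to the single cohomology identity displayed above.
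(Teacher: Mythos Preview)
Your proposal is correct and takes essentially the same approach as the paper. The paper's version is just slightly terser: rather than packaging the initial portion of the zigzag as $\Theta$, it observes directly that $(\bar\varepsilon\otimes\id)\circ\psi=\id$ makes $\cohom{\psi}$ an isomorphism, whence $\cohom{\repshriek\otimes\id}\circ\cohom{\psi}=0$ gives $\cohom{\repshriek\otimes\id}=0$, and since this map is a factor in the composite defining $\dlp_{\fga{V}}$ the whole thing vanishes.
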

\begin{proof}
 Since $\bar{\varepsilon}\otimes\id$ and $\id$ are quasi-isomorphisms, the homomorphism $\psi$ is also a quasi-isomorphism.
 Hence $\cohom{\repshriek\otimes\id}\circ\cohom{\psi}=0$ implies $\cohom{\repshriek\otimes\id}=0$.
 Since $\cohom{\repshriek\otimes\id}$ appears in the description of $\dlp_{\fga{V}}$, this proves the lemma.
\end{proof}

\begin{lem}
 \label{lem_TrivialityOfCoproduct}
 If there is a representative $\repshriek\in\hom_{\fga[2]{V}}(\mpath, \fga[2]{V})$ of $\shriek$ satisfying $\mu_{\fga{V}}\circ \repshriek = 0$,
 then $\dlcop_{\fga{V}}$ is trivial.
\end{lem}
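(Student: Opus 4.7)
The plan is to trace an element through the chain-level composite that defines $\dlcop_{\fga{V}}$ and observe that the piece involving $\shriek \otimes \id$ already dies at the chain level under the stated hypothesis, so the induced cohomology map vanishes.

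Concretely, I would focus on the tail portion of the zigzag description in Section~\ref{sect_StringTopology}, namely
\[
\mpath\otimes_{\fga[2]{V}}(\mpath\otimes_{\fga[2]{V}}\mpath)
\xrightarrow{\repshriek\otimes\id}
\fga[2]{V}\otimes_{\fga[2]{V}}(\mpath\otimes_{\fga[2]{V}}\mpath)
\xrightarrow{\cong}
\mpath\otimes_{\fga[2]{V}}\mpath
\xrightarrow[\qis]{\bar{\varepsilon}\otimes\id}
\fga{V}\otimes_{\fga[2]{V}}\mpath
\xrightarrow{\cong} \mloop,
\]
where $\repshriek$ is used to represent $\shriek$. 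Starting from an element $m\otimes(m_1\otimes m_2)$, the first map produces $\repshriek(m)\otimes(m_1\otimes m_2)$ with $\repshriek(m)\in\fga[2]{V}$; the canonical isomorphism pushes the scalar $\repshriek(m)$ into the first tensor factor, giving $\repshriek(m)\cdot m_1\otimes m_2$; applying $\bar{\varepsilon}\otimes\id$ and using that $\bar{\varepsilon}$ is a $\fga[2]{V}$-module map restricting to $\mu_{\fga{V}}$ on $\fga[2]{V}$, one obtains $\mu_{\fga{V}}(\repshriek(m))\cdot\bar{\varepsilon}(m_1)\otimes m_2$ in $\mloop$. The hypothesis $\mu_{\fga{V}}\circ\repshriek = 0$ therefore forces this entire composite to be the zero chain map.

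Once that composite of chain maps vanishes at the chain level, its induced map on cohomology is zero. Since every preceding step in the description of $\dlcop_{\fga{V}}$ is either an isomorphism of complexes or a quasi-isomorphism (the $\bar{\varepsilon}\otimes\id$ traversed backwards on cohomology, which is invertible), the full composite on cohomology factors through this vanishing chain map, so $\dlcop_{\fga{V}}=0$. I do not expect a serious obstacle: the only place a subtle check is needed is the identity $\bar{\varepsilon}(a\cdot m)=\mu_{\fga{V}}(a)\cdot\bar{\varepsilon}(m)$ for $a\in\fga[2]{V}$, $m\in\mpath$, which is just $\fga[2]{V}$-linearity of the relative Sullivan model $\bar{\varepsilon}$; together with the standard conventions on the tensor-product isomorphism $\fga[2]{V}\otimes_{\fga[2]{V}}(-)\cong(-)$, this makes the calculation entirely formal.
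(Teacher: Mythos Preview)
Your argument is correct and is essentially the same as the paper's: both compute that the tail composite
\[
\mpath\otimes_{\fga[2]{V}}(\mpath\otimes_{\fga[2]{V}}\mpath)\xrightarrow{\repshriek\otimes\id}\fga[2]{V}\otimes_{\fga[2]{V}}(\mpath\otimes_{\fga[2]{V}}\mpath)\xrightarrow{\cong}\mpath\otimes_{\fga[2]{V}}\mpath\xrightarrow{\bar{\varepsilon}\otimes\id}\fga{V}\otimes_{\fga[2]{V}}\mpath
\]
sends $m\otimes(m_1\otimes m_2)$ to $\mu_{\fga{V}}(\repshriek(m))\cdot\bar{\varepsilon}(m_1)\otimes m_2$ (the paper packages this as the commuting diagram with diagonal $(\iota\circ\mu_{\fga{V}}\circ\repshriek)\cdot(\bar{\varepsilon}\otimes\id)$), hence vanishes at the chain level under the hypothesis, and then both conclude that $\dlcop_{\fga{V}}=0$ since the remaining arrows in the zigzag are cohomology isomorphisms.
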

\begin{proof}
 Let $\iota$ be the inclusion map from $\fdga{V}$ into $\fdga{V} \otimes_{\fga[2]{V}} \mpath$.
 Then the diagram
 \[
 \xymatrix@C=70pt{
 \mpath\otimes_{\fga[2]{V}}(\mpath\otimes_{\fga[2]{V}}\mpath) \ar[d]^{\repshriek\otimes\id} \ar@/^50pt/[rddd]^{(\iota\circ\mu_{\fga{V}}\circ \repshriek)\cdot(\bar{\varepsilon}\otimes\id)}&\\
 \fga[2]{V}\otimes_{\fga[2]{V}}(\mpath\otimes_{\fga[2]{V}}\mpath) \ar[d]_\cong \ar@/^10pt/[rdd]^{(\iota\circ\mu_{\fga{V}})\cdot(\bar{\varepsilon}\otimes\id)}&\\
 \mpath\otimes_{\fga[2]{V}}\mpath \ar[d]^{\bar{\varepsilon}\otimes\id}_{\qis} &\\
 \fga{V} \otimes_{\fga[2]{V}} \mpath \ar[r]^= & \fga{V} \otimes_{\fga[2]{V}} \mpath\\
 }
 \]
 commutes.
  Since the composition of vertical maps appears in the last part of the description of $\dlcop$,
 this proves the lemma.

\end{proof}

\begin{proof}[Proof of \protect \MakeUppercase {T}heorem\nobreakspace \ref {thm_main}]
 (\ref{item_TrivTamanoi})
 Using the filtration of $W$ as a relative Sullivan algebra,
 take a basis $w_1,\ldots,w_n$ of $W$ satisfying $dw_i \in \fga{x}\otimes\wedge(w_1,\ldots,w_{i-1})$.
 Let $V(i)$ be a subspace $\K\{x,w_1,\ldots,w_i\}$ of $V$ and $J_i$ an ideal $\ideal{\snd{x} - \fst{x}}{\fga[2]{V(i)}}$ of $\fga[2]{V(i)}$.
 Define a $(\wedge V(0), 0)\tpow{2}$-linear map
 \[
  \repshriek^{(0)}\colon \mpathv{V(0)} \rightarrow (\wedge V(0), 0)\tpow{2}
 \]
 by $\repshriek^{(0)}(1) = \snd{x}-\fst{x}$ and $\repshriek^{(0)}(x^k) = 0$ for $k \geq 1$,
 and $\fdga[2]{V(i)}$-linear maps
 \[
  \repshriek^{(i)}\colon \mpathv{V(i)} \rightarrow \fdga[2]{V(i)}
 \]
 by $\repshriek^{(i)}=\Phi(\repshriek^{(i-1)})$ inductively on $i$, for $1 \leq i \leq n$.
 Then, we have $\im \repshriek^{(i)} \subset J_i$ inductively, and hence $\mu_{\fga{V(i)}}\circ\repshriek^{(i)} = 0$.
 In particular, we have $\mu_{\fga{V}}\circ\repshriek^{(n)} = 0$.
 Hence \protect \MakeUppercase {L}emma\nobreakspace \ref {lem_TrivialityOfCoproduct} implies $\dlcop_{\fga{V}} = 0$.

 (\ref{item_TrivFT})
 Let $\repshriek' \in \hom_{\fga[2]{W}}(\mpathv{W}, \fga[2]{W})$ be a good cocycle and define $\repshriek = \Phi(\repshriek')$.
 Take $\psi$ as in \protect \MakeUppercase {L}emma\nobreakspace \ref {lem_section}.
 Since $\repshriek(\fga[2]{V} \otimes \wedge \overline{W}) = 0$ by the definition of $\Phi$,
 the composition $(\repshriek\otimes\id)\circ\psi$ vanishes.
 Hence \protect \MakeUppercase {L}emma\nobreakspace \ref {lem_TrivialityOfProduct} implies $\dlp_{\fga{V}}=0$.

 (\ref{item_TrivNaito})
 Since the difference $\dim W^{\mathrm{odd}} - \dim W^\mathrm{even}$ is homotopy invariant for Sullivan algebras $\fdga{W}$ with $\dim W < \infty$, we may assume $\fdga{V}$ is semi-pure by \protect \MakeUppercase {T}heorem\nobreakspace \ref {thm_Semi-pureModel}.
 Let $\repshriek \in \hom_{\fga[2]{V}}(\mpath, \fga[2]{V})$ be a good cocycle constructed by \protect \MakeUppercase {C}orollary\nobreakspace \ref {cor_Goodness}.
 The $\fga[2]{V}$-module $\mpath = \fga[2]{V}\otimes\fga{\overline{V}}$ is generated by the elements of the form
 $\bar{x}_{i_1}\cdots\bar{x}_{i_n}$ and $a\bar{y}_j$ for $a \in \fga{\overline{V}}$.
 For these elements, we have
 $\mu_{\fga{V}}\repshriek(\bar{x}_{i_1}\cdots\bar{x}_{i_n}) = 0$ by $p-q = \dim V^\mathrm{even} - \dim V^\mathrm{odd} < 0$
 , and
 $\mu_{\fga{V}}\repshriek(a\bar{y}_j) = 0$ by the goodness of $\repshriek$.
 Hence we have $\mu_{\fga{V}}\repshriek = 0$, and this proves $\dlcop_{\fga{V}}=0$ using \protect \MakeUppercase {L}emma\nobreakspace \ref {lem_TrivialityOfProduct}.
\end{proof}

\appendix

\section{Appendix: Explicit construction of models}
\label{sect_appendix}
In this section, let $\fdga{V}$ be a Sullivan algebra with $V^1 = 0$,
and $\{V(k)\}_{k\geq -1}$ a filtration of $V$ for the Sullivan algebra $\fdga{V}$ with $V(-1) = 0$.
We will construct a relative Sullivan model for the multiplication map $\mu_{\fga{V}}$ without assuming $\dim V < \infty$,
and construct the element $x'$ in \protect \MakeUppercase {P}roposition\nobreakspace \ref {prop_ConstructionOfRelSullModel} explicitly.
In the case $\fdga{V}$ is minimal, this construction is given in \cite[\S15(c) Example 1]{felix-halperin-thomas01}.
We give the construction without assuming the minimality and with more detailed proof.

Before starting the construction, we define the exponential of a derivation and give some lemmas without proofs.

\begin{defn}
 \label{defn_LocallyNilpotent}
 Let $A$ be a graded algebra.
 \begin{enumerate}
  \item A derivation $\theta\colon A \rightarrow A$ is called \textit{locally nilpotent}
 if, for any $a \in A$, there exists a positive integer $n$ such that $\theta^na=0$.
  \item For a locally nilpotent derivation $\theta\colon A \rightarrow A$ of degree 0, we define a linear map $e^\theta\colon A \rightarrow A$
 by $\displaystyle e^\theta a = \sum_{n\geq0}\frac{1}{n!}\theta^na$ for $a \in A$.
 \end{enumerate}
\end{defn}

Note that, since $\theta$ is locally nilpotent, the sum $\displaystyle\sum_{n\geq0}\frac{1}{n!}\theta^na$ is a finite sum for any $a \in A$.
This linear map satisfies the following basic properties.

\begin{lem}
 \label{lem_PropertiesOfExp}
 Let $A$ be a graded algebra and $\theta\colon A \rightarrow A$ a locally nilpotent derivation of degree $0$.
 \begin{enumerate}
  \item The linear map $e^\theta\colon A \rightarrow A$ is a graded algebra homomorphism.
  \item If $\rho\colon A \rightarrow A$ is a locally nilpotent derivation of degree 0 with $\theta\rho = \rho\theta$,
	then the derivation $\theta+\rho$ is also locally nilpotent and we have $e^{\theta+\rho} = e^\theta e^\rho = e^\rho e^\theta$.
	In particular, $e^\theta = (e^{-\theta})^{-1}$ is an isomorphism.
  \item If $d\colon A\rightarrow A$ is a derivation with $d\theta = \theta d$,
	then we have $de^\theta = e^\theta d$.
 \end{enumerate}
\end{lem}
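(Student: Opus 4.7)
The plan is to handle the three parts in sequence, the common thread being that for any fixed element $a \in A$ the series defining $e^{\theta}a$ collapses to a finite sum by local nilpotence, so the identities reduce to formal manipulations in the (non-commutative but truncating) polynomial ring generated by $\theta$, $\rho$ and $d$.

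For part (1), I would first prove the binomial Leibniz identity $\theta^n(ab) = \sum_{k=0}^{n}\binom{n}{k}\theta^k(a)\,\theta^{n-k}(b)$ by induction on $n$, using that $\theta$ is a derivation; since $\deg\theta = 0$, no Koszul signs appear. Dividing by $n!$, summing over $n$, and reindexing as a Cauchy product then gives
\[
e^{\theta}(ab) = \sum_{k,l \geq 0}\frac{\theta^k(a)}{k!}\cdot\frac{\theta^l(b)}{l!} = e^{\theta}(a)\,e^{\theta}(b),
\]
where local nilpotence at $a$ and $b$ ensures all sums terminate.

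For part (2), the first task is to verify that $\theta+\rho$ is locally nilpotent. The commutativity $\theta\rho=\rho\theta$ yields the binomial expansion $(\theta+\rho)^n = \sum_{k=0}^{n}\binom{n}{k}\theta^k\rho^{n-k}$. Fix $a \in A$ and pick $N$ with $\theta^N a = 0$; for each $k < N$, use local nilpotence of $\rho$ at $\theta^k a$ to pick $M_k$ with $\rho^{M_k}\theta^k a = 0$, and set $M = \max_{k<N} M_k$. For $n \geq N+M$ every term vanishes: if $k \geq N$ then $\theta^k a = 0$, and otherwise $n-k \geq M \geq M_k$ so $\theta^k\rho^{n-k}a = \rho^{n-k}\theta^k a = 0$ by commutativity. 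Once this is established, the Cauchy product argument of part (1), with commutativity playing the role of the Leibniz rule, gives $e^{\theta+\rho} = e^{\theta}e^{\rho}$; taking $\rho = -\theta$ yields $e^{\theta}e^{-\theta} = e^{0} = \id$, hence invertibility.

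For part (3), a trivial induction turns $d\theta = \theta d$ into $d\theta^n = \theta^n d$ for every $n$, so applying $d$ term-by-term to the finite sum $e^{\theta}a = \sum_{n\geq 0}\theta^n a/n!$ gives $d(e^{\theta}a) = e^{\theta}(da)$. The only step that requires any real care is the local nilpotence of $\theta+\rho$, which is where both nilpotence hypotheses must be used jointly via the commutation relation; the rest is routine formal power series manipulation that is legitimate precisely because the truncation at each argument makes convergence issues vacuous.
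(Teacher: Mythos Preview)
Your proof is correct. The paper actually states this lemma without proof (it introduces the exponential and explicitly says it gives ``some lemmas without proofs''), so there is nothing to compare against; your argument supplies exactly the standard verification the paper omits.
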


The derivation $\theta$ will be constructed by the following lemma in this section.

\begin{lem}
 \label{lem_Bracket}
 Let $A$ be a graded algebra, and $d$ and $s$ derivations on $A$.
 Define $\theta = ds - (-1)^{\deg{d}\deg{s}}sd$.
 Then $\theta\colon A \rightarrow A$ is a derivation.
\end{lem}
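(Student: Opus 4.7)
The plan is a direct verification of the graded Leibniz rule for $\theta$. First note that $\theta = ds - (-1)^{\deg{d}\deg{s}}sd$ is homogeneous of degree $\deg{d}+\deg{s}$, so what must be shown is that for any homogeneous $a,b \in A$,
\[
\theta(ab) = \theta(a)\,b + (-1)^{(\deg{d}+\deg{s})\deg{a}}\,a\,\theta(b).
\]

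I would proceed by expanding $ds(ab)$ and $sd(ab)$ separately by applying the Leibniz rule first for $s$ and then for $d$ (respectively the other way around), picking up the appropriate Koszul signs each time a derivation is moved past $a$. Each of the two expansions produces four terms: one ``outer-left'' term with both derivations applied to $a$, one ``outer-right'' term with both applied to $b$ (appearing on the right of $a$), and two ``mixed'' terms proportional to $s(a)d(b)$ and to $d(a)s(b)$ with various signs. Forming the graded commutator $\theta(ab) = ds(ab) - (-1)^{\deg{d}\deg{s}}sd(ab)$, the outer-left terms assemble into $\theta(a)b$, and the outer-right terms collect into $(-1)^{(\deg{d}+\deg{s})\deg{a}}\,a\,\theta(b)$, which is exactly the desired Leibniz sign for a derivation of degree $\deg{\theta}$.

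The one substantive check is that the four mixed terms cancel pairwise. This is pure sign bookkeeping: one compares the exponents $\deg{d}(\deg{a}+\deg{s}) + \deg{s}\deg{a}$ coming from $ds(ab)$ with $\deg{d}\deg{s} + \deg{s}(\deg{a}+\deg{d}) + \deg{d}\deg{a}$ coming from $(-1)^{\deg{d}\deg{s}}sd(ab)$, and uses $(-1)^{2\deg{d}\deg{s}} = 1$ to see the two $s(a)d(b)$-terms and the two $d(a)s(b)$-terms cancel. The only conceivable obstacle is a bookkeeping error in these Koszul signs; once the cancellation is verified, the Leibniz rule holds with the correct sign and $\theta$ is a derivation of degree $\deg{d}+\deg{s}$.
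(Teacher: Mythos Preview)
Your proposal is correct; the paper actually states this lemma without proof (it is introduced alongside Lemma~\ref{lem_PropertiesOfExp} under the heading ``we define the exponential of a derivation and give some lemmas without proofs''), so your direct verification of the graded Leibniz rule is exactly the expected elementary argument.
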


Now, let us start the construction of a relative Sullivan model for $\mu_{\fga{V}}$.
We begin the construction by defining a dga inductively on $k$.

\newcommand{\property}[2]{$(#1)_{#2}$}
\newcommand{\properties}[1]{\property 1{#1}, \property 2{#1}, \property 3{#1}, and \property4{#1}}
\newcommand{\filt}[1]{\fga[2]{V(#1)}\otimes\fga{\overline{V(#1)}}}
\begin{prop}
 \label{prop_ConstructionOfDifferential}
 Define a derivation
 \[
  s_k\colon \fga[2]{V(k)} \otimes \fga{\overline{V(k)}} \rightarrow \fga[2]{V(k)} \otimes \fga{\overline{V(k)}}
 \]
 of degree $(-1)$ by $s_k(\fst{v})=s_k(\snd{v})=\bar{v}$ and $s_k(\bar{v})=0$ for $v \in V(k)$.
 By induction on $k$, we define a dga $(\filt{k}, d_k)$ with the following properties.
 \begin{itemize}
  \item[\property 1k] For any element $v \in V(k)$, there is a positive integer $n$ such that
	       \[
		(s_{k-1}d_{k-1})^{n-1}s_{k-1}d\fst{v} = 0.
	       \]
	       Hence an element
	       \[
		\sum_{n\geq 1} \frac{1}{n!}(s_{k-1}d_{k-1})^{n-1}s_{k-1}d\fst{v} \in \filt{k-1}
	       \]
	       is well-defined.
	       Hereafter, we denote this element simply by $\sum_{n\geq 1} \frac{1}{n!}(sd)^{n}\fst{v}$.
  \item[\property 2k] By \property 1k, we define an element
	       \[
		\snd{v} - \fst{v} - \sum_{n\geq 1} \frac{1}{n!}(sd)^{n}\fst{v} \in \fga[2]{V(k)}\otimes\fga{\overline{V(k-1)}}
	       \]
	       for $v\in V(k)$.
	       Then this element is a cocycle, where the differential is defined by
	       \[
		(\fga[2]{V(k)}\otimes\fga{\overline{V(k-1)}}, d) = \fdga[2]{V(k)} \otimes_{\fga{V(k-1)}} (\filt{k-1}, d_{k-1}).
	       \]
  \item[\property 3k] By \property 2k, we define a dga $(\filt{k}, d_k)$ by
	       \[
		d_k\bar{v} = \snd{v} - \fst{v} - \sum_{n\geq 1} \frac{1}{n!}(sd)^{n}\fst{v},
	       \]
	       extending the above dga $(\fga[2]{V(k)}\otimes\fga{\overline{V(k-1)}}, d)$.
	       Using this, we define a derivation $\theta_k = s_kd_k + d_ks_k$ of degree $0$ on $\filt{k}$.
	       Then, this satisfies $d_k\theta_k = \theta_kd_k$, $s_k\theta_k = \theta_ks_k$, and $\theta_k^n = (s_kd_k)^n + (d_ks_k)^n$ for $n \geq 1$.
	       Moreover, $\theta_k$ is locally nilpotent.
	       %And for any $a \in \filt{k}$, there is an positive integer $m$ such that ${\theta_k}^ma = 0$.
  \item[\property4k] By \property 3k, we define a dga isomorphism
	       \[
		e^{\theta_k} = \sum_{n\geq 0}\frac{1}{n!}{\theta_k}^n\colon (\filt{k}, d_k) \rightarrow (\filt{k}, d_k).
	       \]
	       Then, for any $b\in \fga{V(k)}$, we have $e^{\theta_k}(\fst{b}) = \snd{b}$.
	       
 \end{itemize}
\end{prop}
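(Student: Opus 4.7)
The plan is induction on $k$, with the base $k = -1$ trivial since $V(-1) = 0$ makes $\fga[2]{V(-1)}\otimes\fga{\overline{V(-1)}} = \K$. Assume $(1)_{k-1}$--$(4)_{k-1}$ hold.

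For $(1)_k$, since $dv \in \fga{V(k-1)}$, the element $s_{k-1}\fst{dv}$ lies in $\fga[2]{V(k-1)}\otimes\fga{\overline{V(k-1)}}$. From $s_{k-1}^2 = d_{k-1}^2 = 0$ one reads off the operator identity $\theta_{k-1}\circ(s_{k-1}d_{k-1})^n = (s_{k-1}d_{k-1})^{n+1}$; combined with local nilpotence of $\theta_{k-1}$ from $(3)_{k-1}$, this forces $(s_{k-1}d_{k-1})^{n}$ to annihilate $s_{k-1}\fst{dv}$ for $n$ sufficiently large. For $(2)_k$, I would use $d_{k-1}(s_{k-1}d_{k-1})^{n-1}s_{k-1} = (d_{k-1}s_{k-1})^n$ together with $\theta_{k-1}^n = (sd)^n + (ds)^n$ to rewrite $d_k\alpha$ as
\[
\snd{dv} - e^{\theta_{k-1}}\fst{dv} + \sum_{n\ge 1}\tfrac{1}{n!}(sd)^n\fst{dv};
\]
then $(4)_{k-1}$ collapses the first two terms while $(sd)\fst{dv} = s\fst{d^2v} = 0$ kills the sum.

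For $(3)_k$, the operator identities $d_k\theta_k = \theta_k d_k$, $s_k\theta_k = \theta_k s_k$, and $\theta_k^n = (s_kd_k)^n + (d_ks_k)^n$ follow formally from $d_k^2 = s_k^2 = 0$. Since $\theta_k$ restricts to $\theta_{k-1}$ on $\fga[2]{V(k-1)}\otimes\fga{\overline{V(k-1)}}$, local nilpotence there is inherited. On each new generator, $\theta_k\bar v = s_kd_k\bar v = 0$ because $s_k$ kills every term of $d_k\bar v$ (the leading part gives $\bar v - \bar v$, and each $(sd)^n\fst v$ lies in $\im s_{k-1}\subset\ker s_{k-1}$); this in turn forces $(d_ks_k)^n\fst v = 0 = (d_ks_k)^n\snd v$ for $n\ge 2$, so $\theta_k^n\fst v = (s_kd_k)^n\fst v$ there, reducing local nilpotence on the new generators to that of $(s_kd_k)$ on $\fga[2]{V(k-1)}\otimes\fga{\overline{V(k-1)}}$ established in the proof of $(1)_k$. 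The derivation property then propagates the result to all of $\fga[2]{V(k)}\otimes\fga{\overline{V(k)}}$.

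Finally, for $(4)_k$, Lemma \ref{lem_PropertiesOfExp} makes $e^{\theta_k}$ a dga automorphism; by the algebra-homomorphism property it suffices to show $e^{\theta_k}\fst v = \snd v$ on each generator $v\in V(k)$, with $v\in V(k-1)$ handled by $(4)_{k-1}$. For a new generator the crucial observation is $[s_kd_k, d_ks_k] = 0$ (both compositions vanish by $s_k^2 = d_k^2 = 0$), giving $e^{\theta_k} = e^{d_ks_k}\circ e^{s_kd_k}$. Writing $e^{s_kd_k}\fst v = \fst v + \sum_{n\ge 1}\tfrac{1}{n!}(sd)^n\fst v$, each summand with $n\ge 1$ lies in $\ker s_k$ and is therefore fixed by the outer $e^{d_ks_k}$; meanwhile $e^{d_ks_k}\fst v = \fst v + d_k\bar v = \snd v - \sum_{m\ge 1}\tfrac{1}{m!}(sd)^m\fst v$ by the same $s_kd_k\bar v = 0$ argument, and the two sums cancel to give $\snd v$. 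The main obstacle will be the bookkeeping in $(3)_k$ — tracking the infinite tail inside $d_k\bar v$ against the graded commutator identities in order to extract local nilpotence from $(3)_{k-1}$; the decisive simplification throughout is the identity $s_kd_k\bar v = 0$, which collapses higher powers of $d_ks_k$ on new generators and drives both the nilpotence verification and the $e^{\theta_k}$ computation.
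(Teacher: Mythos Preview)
Your proof is correct and follows essentially the same approach as the paper's: the same inductive scheme, the same key identities ($\theta_{k-1}^n=(sd)^n+(ds)^n$, $s_kd_k\bar v=0$, $(d_ks_k)^2\fst v=0$), and the same appeals to $(3)_{k-1}$ and $(4)_{k-1}$ at the corresponding steps. The only organizational difference is in $(4)_k$: the paper expands $e^{\theta_k}\fst v$ directly as $\fst v+\sum\frac{1}{n!}(s_kd_k)^n\fst v+\sum\frac{1}{n!}(d_ks_k)^n\fst v$ and then collapses the second sum to $d_k\bar v$, while you factor $e^{\theta_k}=e^{d_ks_k}\circ e^{s_kd_k}$ first---but since $s_kd_k$ and $d_ks_k$ commute and are each locally nilpotent (an immediate consequence of $\theta_k^n=(s_kd_k)^n+(d_ks_k)^n$ together with local nilpotence of $\theta_k$), this is just a repackaging of the same computation.
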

\begin{proof}
 We prove \properties{k} by induction on $k$.
 %Properties \properties{-1} are obvious.
 These are obvious when $k = -1$.
 Assume $k\geq 0$ and the properties \properties{k-1}.

 \property 1k
 Since ${s_{k-1}}^2=0$, we have
 \[
  (s_{k-1}d_{k-1})^{n-1}s_{k-1}d\fst{v} = {\theta_{k-1}}^{n-1}s_{k-1}d\fst{v} = 0
 \]
 for sufficiently large $n$ by \property 3{k-1}.

 \property 2k
 Since $d^2=0$, we have
 \begin{eqnarray*}
  d\left(\snd{v} - \fst{v} - \sum_{n\geq 1} \frac{1}{n!}(sd)^{n}\fst{v}\right)
   &=& \snd{(dv)} - \fst{(dv)} - \sum_{n\geq 1}\frac{1}{n!}(ds)^n\fst{(dv)} \\
  &=& \snd{(dv)} - \fst{(dv)} - \sum_{n\geq 1}\frac{1}{n!}{\theta_{k-1}}^n\fst{(dv)} \\
  &=& \snd{(dv)} - e^{\theta_{k-1}}\fst{(dv)} \\
  &=& 0
 \end{eqnarray*}
 by \property4{k-1}.

 \property 3k
 The first half of \property 3k is obvious.
 To prove the second half, it is sufficient to consider the case $a = \fst{v}, \snd{v}, \bar{v}$ for $v \in V(k)$, since $\theta_k$ is a derivation.
 Let $i$ be $1$ or $2$, and $v$ an element of $V(k)$.
 Then we have
 \begin{eqnarray*}
  {\theta_k}^m(\ith{v})
   &=& {\theta_{k-1}}^{m-1}s_{k-1}d\ith{v} + {\theta_k}^{m-1}\left(\snd{v}-\fst{v}-\sum_{n\geq1}\frac{1}{n!}(sd)^n\fst{v}\right) \\
  &=& {\theta_{k-1}}^{m-1}s_{k-1}d\ith{v}
   + {\theta_{k-1}}^{m-2}s_{k-1}(d\snd{v}-d\fst{v}) \\
   &&- {\theta_{k-1}}^{m-1}\sum_{n\geq1}\frac{1}{n!}(s_{k-1}d_{k-1})^{n-1}s_{k-1}d\fst{v}\\
  &=& {\theta_{k-1}}^{m-2}\Biggl( \theta_{k-1}s_{k-1}d\ith{v}
			   + s_{k-1}(d\snd{v}-d\fst{v})\\
			   &&\hspace{50pt}-\theta_{k-1}\sum_{n\geq1}\frac{1}{n!}(s_{k-1}d_{k-1})^{n-1}s_{k-1}d\fst{v}
			  \Biggr) \\
  &=& 0
 \end{eqnarray*}
 for sufficiently large $m$ by \property 3{k-1}.
 On the other hand, we have $\theta_k\bar{v}=0$ by ${s_k}^2=0$.
 These prove the property \property 3k.

 \property4k
 Since $e^{\theta_k}$ is an algebra homomorphism, it is sufficient to prove $e^{\theta_k}\fst{v}=\snd{v}$ for $v \in V(k)$.
 Since $(d_ks_k)^2\fst{v}=0$, we have
 \[
  \sum_{n\geq1}\frac{1}{n!}(d_ks_k)^n\fst{v} = d_k\bar{v} = \snd{v} - \fst{v} - \sum_{n\geq1}\frac{1}{n!}(s_kd_k)^n\fst{v}.
 \]
 Hence
 \begin{eqnarray*}
  e^{\theta_k}\fst{v}
   &=& \fst{v} + \sum_{n\geq1}\frac{1}{n!}(s_kd_k)^n\fst{v} + \sum_{n\geq1}\frac{1}{n!}(d_ks_k)^n\fst{v} \\
  &=& \fst{v} + \sum_{n\geq1}\frac{1}{n!}(s_kd_k)^n\fst{v} + \left(\snd{v} - \fst{v} - \sum_{n\geq1}\frac{1}{n!}(s_kd_k)^n\fst{v}\right) \\
  &=& \snd{v}.
 \end{eqnarray*}
 This completes the induction and proves the proposition.
\end{proof}

\begin{defn}
 \label{defn_PathModel}
 We define a dga $(\fga[2]{V}\otimes\fga{\overline{V}}, d)$ to be the union of $(\filt{k}, d_k)$ in \protect \MakeUppercase {P}roposition\nobreakspace \ref {prop_ConstructionOfDifferential}.
\end{defn}

We need the following lemmas to complete the construction of a relative Sullivan model of the multiplication map.

\begin{lem}
 \label{lem_ExpRelSullAlg}
 The dga $(\fga[2]{V}\otimes\fga{\overline{V}}, d)$ in \protect \MakeUppercase {D}efinition\nobreakspace \ref {defn_PathModel} is a relative Sullivan algebra over the Sullivan algebra $\fdga[2]{V}$.
\end{lem}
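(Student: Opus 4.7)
The task is to check the three structural conditions in the definition of a relative Sullivan algebra with base $\fdga[2]{V}$: (i) $\fdga[2]{V}$ sits inside $(\fga[2]{V}\otimes\fga{\overline{V}}, d)$ as a sub cdga, (ii) $\overline{V}$ is concentrated in strictly positive degrees, and (iii) there is an increasing filtration of $\overline{V}$ whose differential lands in the expected place. All three are essentially bookkeeping on top of the inductive construction of $d$ given in \protect\MakeUppercase{P}roposition\nobreakspace\ref{prop_ConstructionOfDifferential}.

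First I would note (i): by construction of the dga's $(\filt{k}, d_k)$, the differential $d_k$ restricted to $\fga[2]{V(k)}$ coincides with the tensor product differential, because the formula $d_k\bar v = \snd v - \fst v - \sum_{n\geq 1}\tfrac{1}{n!}(sd)^n\fst v$ only touches the new $\bar v$'s, not the old $\fst v$'s and $\snd v$'s; so $\fdga[2]{V(k)}\hookrightarrow (\filt{k}, d_k)$ as a sub cdga, and passing to the union gives the inclusion $\fdga[2]{V}\hookrightarrow (\fga[2]{V}\otimes\fga{\overline{V}}, d)$. For (ii), since $\overline{V}^n = V^{n+1}$ and the hypothesis $V^1=0$ gives $\overline{V}^0 = 0$, so $\overline{V}$ is concentrated in degrees $\geq 1$.

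The main content is (iii), and this is where I would use the filtration $\{V(k)\}_{k\geq -1}$ witnessing that $\fdga{V}$ is a Sullivan algebra. Set $W(k) = \overline{V(k)} \subset \overline{V}$; then $0 = W(-1)\subset W(0)\subset \cdots \subset \overline{V}$ is an exhaustive filtration with $\overline{V} = \bigcup_k W(k)$. For $v\in V(k)$, the defining formula for $d\bar v$ (property \property2k in \protect\MakeUppercase{P}roposition\nobreakspace\ref{prop_ConstructionOfDifferential}) gives
\[
 d\bar v \;=\; \snd v - \fst v - \sum_{n\geq 1}\tfrac{1}{n!}(sd)^n\fst v \;\in\; \fga[2]{V(k)}\otimes\fga{\overline{V(k-1)}},
\]
using that $\snd v - \fst v\in\fga[2]{V(k)}$ and that the sum lies in $\filt{k-1}$ (by property \property1k, since each application of $s_{k-1}$ produces a barred generator from $V(k-1)$). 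Therefore
\[
 d(W(k)) \;\subset\; \fga[2]{V}\otimes\fga{W(k-1)},
\]
which is exactly the filtration condition required of a relative Sullivan algebra with base $\fga[2]{V}$.

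I do not expect a serious obstacle: every nontrivial input (well-definedness of the sum $\sum (sd)^n\fst v$, the cocycle identity needed to extend $d$, and so on) has already been done inside \protect\MakeUppercase{P}roposition\nobreakspace\ref{prop_ConstructionOfDifferential}; here one is just reading off that the construction respects the filtration induced from $V$. The only point of minor care is noting that $\overline{V}^0 = 0$ uses $V^1 = 0$, and that the base filtration needs no $H(B)=\K$ condition for this particular structural verification.
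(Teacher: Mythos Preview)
Your proposal is correct and follows the same approach as the paper: define the required filtration by $\overline{V}(k) = \overline{V(k)}$ and read off from the formula in \protect\MakeUppercase{P}roposition\nobreakspace\ref{prop_ConstructionOfDifferential} that $d\bar v \in \fga[2]{V(k)}\otimes\fga{\overline{V(k-1)}}$. The paper's proof is a two-sentence sketch of exactly this; you have simply written out the verifications of the remaining conditions (sub-cdga, positive grading) that the paper leaves implicit.
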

\begin{proof}
 Define a filtration $\{\overline{V}(k)\}$ by $\overline{V}(k) = \overline{V(k)}$, the subspace of $\overline{V}$ corresponding to $V(k)$.
 Then this filtration satisfies the definition of a relative Sullivan algebra.
\end{proof}

\begin{lem}
 \label{lem_QisElementaryManyBasis}
 Let $(B\otimes\wedge W, d)$ be a relative Sullivan algebra over a cdga $(B,d)$ with $W = W^{\geq 2}$ and $dW \subset B$.
 Define a relative Sullivan algebra $(B\otimes\wedge W\tpow{2}\otimes\wedge\overline{W}, d)$ over $(B,d)$ by
 $d(\snd{w}) = d(\fst{w}) = dw$ and $d\bar{w} = \snd{w} - \fst{w}$ for $w \in W$, where $dw\in B$ is considered as an element of $B\otimes\wedge W\tpow{2}\otimes\wedge\overline{W}$ by the canonical inclusion,
 and the dga homomorphism
 \[
 \beta\colon (B\otimes\wedge W\tpow{2}\otimes\wedge\overline{W}, d) \rightarrow (B\otimes\wedge W, d)
 \]
 by $\beta(b) = b$ for $b\in B$, $\beta(\snd{w}) = \beta(\fst{w}) = w$, and $\beta(\bar{w})=0$, for $w \in W$.
 Then $\beta$ is a quasi-isomorphism.
\end{lem}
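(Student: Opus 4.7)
The proof is a direct generalization of Lemma \ref{lem_QisElementary}. Because the assumption $dW \subseteq B$ means the triples $(\fst{w}, \snd{w}, \bar{w})$ attached to different basis elements of $W$ are independent of one another, the one-variable homotopy constructed in Lemma \ref{lem_QisElementary} extends verbatim, generator by generator, to the whole of $W$.

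First I would define a cdga homomorphism $\gamma\colon (B \otimes \wedge W, d) \rightarrow (B \otimes \wedge W\tpow{2} \otimes \wedge \overline{W}, d)$ by $\gamma|_B = \id$ and $\gamma(w) = \snd{w}$ for $w \in W$. Compatibility with $d$ is immediate from $dw \in B$, and clearly $\beta \circ \gamma = \id$. The remaining task is to show $\gamma \circ \beta \simeq \id$ rel $B$, from which the homotopy equivalence (and hence the quasi-isomorphism property) of $\beta$ will follow.

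For this, I would define
\[
 h\colon (B \otimes \wedge W\tpow{2} \otimes \wedge \overline{W}, d) \rightarrow (B \otimes \wedge W\tpow{2} \otimes \wedge \overline{W}, d) \otimes \wedge(t, dt)
\]
to be the unique cdga homomorphism restricting to the identity on $B$ and satisfying, for each $w \in W$,
\[
 h(\fst{w}) = \fst{w}(1-t) + \snd{w}\,t - (-1)^{\deg w}\bar{w}\,dt, \quad h(\snd{w}) = \snd{w}, \quad h(\bar{w}) = \bar{w}(1-t).
\]
These are exactly the formulas appearing in the proof of Lemma \ref{lem_QisElementary}, now applied independently on each basis element of $W$. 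Verifying that $h$ is a dga map reduces to checking $dh = hd$ on each of $\fst{w}, \snd{w}, \bar{w}$; the only slightly delicate check is that the $dt$-coefficients in $dh(\fst{w})$ cancel for both parities of $\deg w$, which is a short sign calculation. With $h$ in hand, substituting $t = 0$ and $t = 1$ yields $h|_{t=0} = \id$ and $h|_{t=1} = \gamma \circ \beta$, so $h$ realizes the desired $B$-linear dga homotopy, and $\beta$ is a homotopy equivalence rel $B$.

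I do not anticipate any genuine obstacle. The essential point is that attaching the full triple $(\fst{w}, \snd{w}, \bar{w})$ to $B$ is not an iterated Sullivan extension but a single extension by a graded vector space whose differentials land in $B$; because these triples do not interact, the one-variable formulas assemble without modification, and no finiteness assumption on $W$ is needed. An alternative route would be to order a basis of $W$ and apply Lemma \ref{lem_QisElementary} together with Lemma \ref{lem_QisOfRelSullAlg} inductively, but the explicit homotopy approach is both shorter and avoids the technicalities of transfinite induction when $W$ is infinite-dimensional.
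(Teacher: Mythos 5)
Your proposal is correct and is precisely what the paper does: its proof of this lemma consists of the single instruction to construct the homotopy inverse to $\beta$ by the method of Lemma~\ref{lem_QisElementary}, which is exactly the section $\gamma$ and the generator-by-generator homotopy $h$ you wrote out. Your version simply makes the sign check and the $t=0$, $t=1$ evaluations explicit.
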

\begin{proof}
 Construct the homotopy inverse to $\beta$ by a method similar to \protect \MakeUppercase {L}emma\nobreakspace \ref {lem_QisElementary}.
\end{proof}

Now, we complete the construction. 

\begin{thm}
 \label{thm_ExpRelSullModel}
 Let $(\fga[2]{V}\otimes\fga{\overline{V}}, d)$ be the relative Sullivan algebra in \protect \MakeUppercase {D}efinition\nobreakspace \ref {defn_PathModel}, and define a dga homomorphism
 \[
  m\colon (\fga[2]{V}\otimes\fga{\overline{V}}, d) \rightarrow \fdga{V}
 \]
 by $m(\fst{v})=m(\snd{v})=v$ and $m(\bar{v})=0$ for $v\in V$.
 Then $m$ is a relative Sullivan model for $\mu_{\fga{V}}$.
\end{thm}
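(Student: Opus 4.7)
The plan is to verify three things: that $m$ is a well-defined cdga homomorphism, that its restriction to $\fga[2]{V}$ equals $\mu_{\fga{V}}$, and that $m$ is a quasi-isomorphism. The second is immediate from the definition of $m$ on $\fst{v}$ and $\snd{v}$. For the first, the only non-trivial check is $m(d\bar{v}) = 0$; the key observation is that every nonzero term in $s(a)$ carries a bar-variable factor, hence $m\circ s = 0$ and in particular $m((sd)^n\fst{v}) = 0$ for all $n \ge 1$. Therefore $m(d\bar{v}) = m(\snd{v}) - m(\fst{v}) = 0$.

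For the quasi-isomorphism property, write $m_k$ for the restriction of $m$ to $(\filt{k}, d_k)$. Since $m$ is the filtered colimit of the $m_k$'s, it suffices to show each $m_k$ is a quasi-isomorphism, which I would prove by induction on $k$. The base case $k = -1$ is trivial. For the inductive step I introduce the change of variables $\snd{v}' = \snd{v} - \fst{v} - c_v$ for each $v \in V(k) \setminus V(k-1)$, where $c_v = \sum_{n\ge 1}\frac{1}{n!}(sd)^n\fst{v} \in \filt{k-1}$. This identifies $(\filt{k}, d_k)$ as a cdga with
\[
 \bigl((\filt{k-1}, d_{k-1}) \otimes \wedge\{\fst{v}\}\bigr) \otimes \wedge\{\snd{v}', \bar{v}\},
\]
where in the first tensor factor $d\fst{v} = \fst{dv} \in \filt{k-1}$, while in the second factor $d\snd{v}' = 0$ (which is precisely property $(2)_k$ from Proposition~\ref{prop_ConstructionOfDifferential}) and $d\bar{v} = \snd{v}'$.

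Each block $\wedge(\snd{v}', \bar{v})$ has cohomology $\K$, so by K\"unneth the projection onto the first factor killing all the $\snd{v}'$ and $\bar{v}$ is a quasi-isomorphism. The first factor $(\filt{k-1}, d_{k-1}) \otimes \wedge\{\fst{v}\}$ maps quasi-isomorphically to $(\fga{V(k-1)} \otimes \wedge\{\fst{v}\}, d) \cong \fga{V(k)}$ by Lemma~\ref{lem_QisOfRelSullAlg} together with the inductive hypothesis on $m_{k-1}$, under the identification $\fst{v} \leftrightarrow v$. In the new variables $m \circ s = 0$ yields $m_k(c_v) = 0$, so $m_k(\snd{v}') = 0$ and $m_k(\bar{v}) = 0$; hence $m_k$ factors as the above projection followed by the quasi-isomorphism to $\fga{V(k)}$, and is itself a quasi-isomorphism.

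The main delicate point will be verifying that the change of variables really produces a genuine tensor splitting of dgas. This works because each $c_v$ lies in $\filt{k-1}$, so the new generators $\snd{v}'$ and $\bar{v}$ appear in no differential of a generator outside their own two-variable block, and the differentials of the generators in the first tensor factor remain confined there.
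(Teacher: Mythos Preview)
Your proof is correct and follows the same inductive strategy as the paper. The only difference is organizational: you split off the contractible factor $\wedge\{\snd{v}',\bar v\}$ by a change of variables and K\"unneth before applying Lemma~\ref{lem_QisOfRelSullAlg}, whereas the paper first pushes out along $m_{k-1}$ (the map $\alpha$) and then invokes Lemma~\ref{lem_QisElementaryManyBasis} to collapse $\fga[2]{V_k}\otimes\fga{\overline{V_k}}$ (the map $\beta$).
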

\begin{proof}
 It is obvious that $m$ is a dga homomorphism whose restriction to $\fga[2]{V}$ is equal to $\mu_{\fga{V}}$.
 Hence it is enough to prove that $m$ is a quasi-isomorphism.
 Let $m_k$ be the restriction
 \[
  m_k\colon  (\filt{k},d) \rightarrow (\fga{V(k)}, d)
 \]
 of $m$.
 We prove that $m_k$ is a quasi-isomorphism by induction on $k$.
 This is obvious for $m_{-1}$.
 Assume that $m_{k-1}$ is a quasi-isomorphism.
 Take a subspace $V_k$ of $V(k)$ such that $V(k) = V(k-1) \oplus V_k$.
 Define a dga by
 \[
 \begin{split}
  (&\fga{V(k-1)}\otimes\fga[2]{V_k}\otimes\fga{\overline{V_k}}, d)\\
  &= (\fga{V(k-1)}, d) \otimes_{\filt{k-1}} (\filt{k},d)
 \end{split}
 \]
 and a dga homomorphism
 \[
  \beta\colon (\fga{V(k-1)}\otimes\fga[2]{V_k}\otimes\fga{\overline{V_k}}, d) \rightarrow \fga{V(k-1)}\otimes\fga{V_k}
 \]
 by $\beta(v)=v$, $\beta(\fst{w})=\beta(\snd{w})=w$ and $\beta(\bar{w})=0$ for $v\in V(k-1)$ and $w \in V_k$.
 Let $\alpha = m_{k-1}\otimes\id$.
 Then $\alpha$ and $\beta$ are quasi-isomorphism by \protect \MakeUppercase {L}emma\nobreakspace \ref {lem_QisOfRelSullAlg} and \protect \MakeUppercase {L}emma\nobreakspace \ref {lem_QisElementaryManyBasis}, respectively.
 Hence $m_k$ is a quasi-isomorphism by the following commutative diagram.
 \[
 \xymatrix@C=0pt{
 \filt{k} \ar[rr]^{m_k}\ar[d]^\cong && \fga{V(k)} \ar[d]^{\cong} \\
 (\filt{k-1})\otimes\fga[2]{V_k}\otimes\fga{\overline{V_k}} \ar[rd]_{\alpha} \ar[rr]^-{m_k} && \fga{V(k-1)}\otimes\fga{V_k}\\
 & \fga{V(k-1)}\otimes\fga[2]{V_k}\otimes\fga{\overline{V_k}} \ar[ru]_\beta &
 }
 \]
\end{proof}

Finally, we describe some Sullivan models of free loop spaces.

\begin{defn}
 Define a Sullivan algebra by
 \[
  (\fga{V}\otimes\fga{\overline{V}},\bar{d}) = \fdga{V} \otimes_{\fga[2]{V}}(\fga[2]{V}\otimes\fga{\overline{V}}, d)
 \]
 and denote the quotient map by
 \[
  \mu\otimes\id\colon (\fga[2]{V}\otimes\fga{\overline{V}}, d) \rightarrow (\fga{V}\otimes\fga{\overline{V}}, \bar{d}).
 \]
 Define a derivation
 \[
  \bar{s}\colon \fga{V}\otimes\fga{\overline{V}} \rightarrow \fga{V}\otimes\fga{\overline{V}}
 \]
 of degree $(-1)$ by $s(v) = \bar{v}$ and $s(\bar{v})=0$ for $v \in V$.
\end{defn}

\begin{rmk}
 If $\fdga{V}$ is a Sullivan model of a topological space $M$ with $\cohom{M}$ of finite type,
 then $(\fga{V}\otimes\fga{\overline{V}}, \bar{d})$ is a Sullivan model of the free loop space $LM$ by the Eilenberg-Moore theorem.
\end{rmk}

To describe the differential $\bar{d}$, we need the following lemma.

\begin{lem}
 \label{lem_PropertyOfDSbar}
 The derivations $\bar{d}$ and $\bar{s}$ satisfy
 $\bar{s}^2=0$,
 $(\mu\otimes\id) d = \bar{d}(\mu\otimes\id)$, and $(\mu\otimes\id) s = \bar{s}(\mu\otimes\id)$.
\end{lem}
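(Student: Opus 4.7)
The plan is to verify each of the three equalities in turn, exploiting the fact that $\bar{s}^2$, $(\mu\otimes\id)\circ s - \bar{s}\circ(\mu\otimes\id)$, and similar combinations are all determined by their values on algebra generators.

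First, for $\bar{s}^2 = 0$, I would observe that $\bar{s}$ is a derivation of odd degree $-1$, so $\bar{s}^2 = \frac{1}{2}[\bar{s},\bar{s}]$ is itself a derivation (of degree $-2$) on $\fga{V}\otimes\fga{\overline{V}}$. Hence it suffices to check vanishing on the algebra generators, and by definition $\bar{s}^2(v) = \bar{s}(\bar{v}) = 0$ and $\bar{s}^2(\bar{v}) = \bar{s}(0) = 0$ for each $v \in V$.

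Next, the identity $(\mu\otimes\id)\,d = \bar{d}\,(\mu\otimes\id)$ is really the definition of $\bar d$: since $(\fga{V}\otimes\fga{\overline{V}},\bar d)$ was introduced as the base change $\fdga{V}\otimes_{\fga[2]{V}}(\fga[2]{V}\otimes\fga{\overline{V}},d)$ along the cdga homomorphism $\mu_{\fga{V}}\colon \fga[2]{V}\to\fga{V}$, the quotient map $\mu\otimes\id$ is automatically a dga homomorphism, which is precisely what we want. I would just state this explicitly.

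For the third identity $(\mu\otimes\id)\,s = \bar{s}\,(\mu\otimes\id)$, note that $\mu\otimes\id$ is an algebra homomorphism, so both $(\mu\otimes\id)\circ s$ and $\bar{s}\circ(\mu\otimes\id)$ are $(\mu\otimes\id)$-derivations of degree $-1$ from $\fga[2]{V}\otimes\fga{\overline{V}}$ to $\fga{V}\otimes\fga{\overline{V}}$. Two such derivations coincide as soon as they agree on algebra generators, so I only need to check equality on $\fst{v}$, $\snd{v}$, and $\bar{v}$ for $v\in V$. From the definitions in Proposition 6.1 and the last definition of the appendix, $s(\fst{v}) = s(\snd{v}) = \bar{v}$, $s(\bar{v}) = 0$, $\bar{s}(v) = \bar{v}$, $\bar{s}(\bar{v}) = 0$, and $(\mu\otimes\id)$ sends $\fst{v},\snd{v}\mapsto v$ and $\bar{v}\mapsto\bar{v}$; a direct comparison then finishes the verification.

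There is no real obstacle here; the only point requiring care is checking the generator case for $\snd{v}$ in step three, since one must confirm that $s$ was defined symmetrically on $\fst{v}$ and $\snd{v}$ (it was, via $s_k(\fst{v}) = s_k(\snd{v}) = \bar{v}$), so that both sides agree after quotienting by $\mu_{\fga{V}}$.
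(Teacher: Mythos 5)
Your proof is correct and is essentially an expansion of the paper's own argument, which simply states that the lemma is ``obvious from the definitions.'' Checking each identity on algebra generators after noting that both sides are (twisted) derivations, and observing that $(\mu\otimes\id)\,d=\bar d\,(\mu\otimes\id)$ holds by the very construction of $\bar d$ as a base change, is exactly the intended verification.
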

\begin{proof}
 Obvious from the definitions.
\end{proof}

Now, we describe the differential $\bar{d}$.

\begin{prop}
 \label{prop_DescriptionOfModelOfLM}
 The derivations $\bar{d}$ and $\bar{s}$ satisfy $\bar{d}\bar{s} = -\bar{s}\bar{d}$.
 In particular, $\bar{d}\bar{v}$ is calculated by
 \[
  \bar{d}\bar{v} = \bar{d}\bar{s}v = -\bar{s}\bar{d}v = -\bar{s}(dv)
 \]
 from the differential $d$ in $\fdga{V}$.
\end{prop}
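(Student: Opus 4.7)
The plan is to set $\bar{\theta} := \bar{d}\bar{s} + \bar{s}\bar{d}$, which by Lemma \ref{lem_Bracket} is a derivation of degree $0$ on $\fga{V}\otimes\fga{\overline{V}}$ (the graded commutator formula gives precisely $\bar{\theta}$, because $\deg{\bar{d}}\cdot\deg{\bar{s}}=-1$ is odd). The proposition is equivalent to $\bar{\theta}=0$. Since $\bar{\theta}$ is a derivation on a free commutative graded algebra, it suffices to verify $\bar{\theta}v=0$ and $\bar{\theta}\bar{v}=0$ for every generator $v\in V$.

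For $\bar{v}$, the computation is short. Since $\bar{s}\bar{v}=0$, we have $\bar{\theta}\bar{v}=\bar{s}\bar{d}\bar{v}$, and by the intertwining relations of Lemma \ref{lem_PropertyOfDSbar} this equals $(\mu\otimes\id)(sd\bar{v})$. Plugging in the explicit formula for $d\bar{v}$ from property $(3)_k$ of Proposition \ref{prop_ConstructionOfDifferential}, namely $d\bar{v}=\snd{v}-\fst{v}-\sum_{n\geq 1}\frac{1}{n!}(sd)^{n}\fst{v}$, and applying $s$ gives $\bar{v}-\bar{v}-0=0$, because $s^{2}=0$ annihilates every term of the sum.

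For $v$, the route is through the exponential. Property $(4)_{k}$ of Proposition \ref{prop_ConstructionOfDifferential} gives $e^{\theta}\fst{v}=\snd{v}$, equivalently $\sum_{n\geq 1}\frac{1}{n!}\theta^{n}\fst{v}=\snd{v}-\fst{v}$. The intertwining yields $(\mu\otimes\id)\theta^{n}=\bar{\theta}^{n}(\mu\otimes\id)$, and the local nilpotence of $\theta$ from property $(3)_{k}$ transfers to $\bar{\theta}$ by surjectivity of $\mu\otimes\id$, so applying $\mu\otimes\id$ termwise to the finite sum gives the identity
\[
\sum_{n\geq 1}\tfrac{1}{n!}\bar{\theta}^{n}v=0.
\]

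The main obstacle is extracting $\bar{\theta}v=0$ from this single identity. The argument I have in mind is to suppose $\bar{\theta}v\neq 0$ and pick the maximal $N\geq 1$ with $\bar{\theta}^{N}v\neq 0$ (which exists by local nilpotence), then apply $\bar{\theta}^{N-1}$ to the identity above: every term with $n\geq 2$ becomes $\bar{\theta}^{N-1+n}v=0$ by maximality of $N$, so only the $n=1$ term survives and yields $\bar{\theta}^{N}v=0$, contradicting the choice of $N$. Hence $\bar{\theta}v=0$, and the proposition follows. An alternative to this maximality trick is to write $e^{\bar{\theta}}-\id=\bar{\theta}\cdot f(\bar{\theta})$ with $f(x)=(e^{x}-1)/x$; since $f(0)=1$ and $\bar{\theta}$ is locally nilpotent, $f(\bar{\theta})$ is invertible on every element, and $(e^{\bar{\theta}}-\id)v=0$ forces $\bar{\theta}v=0$.
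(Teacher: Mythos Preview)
Your proof is correct, and it takes a genuinely different route from the paper's. The paper argues by induction on the filtration index $k$, carrying along two statements: $(1)_k$ that $(\bar{s}\bar{d})^2v=0$ for $v\in V(k)$, and $(2)_k$ that $\bar{d}_k\bar{s}_k+\bar{s}_k\bar{d}_k=0$. The inductive step uses $(2)_{k-1}$ to get $(1)_k$, then $(1)_k$ collapses the sum $\sum_{n\geq1}\tfrac{1}{n!}(\bar{s}\bar{d})^n v$ to its first term, yielding $(2)_k$. So the paper in effect proves the sharper order-two nilpotence $(\bar{s}\bar{d})^2v=0$ on generators along the way.

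You bypass the induction entirely. You read off $(e^{\bar\theta}-\id)v=0$ directly from property $(4)_k$ of Proposition~\ref{prop_ConstructionOfDifferential} via the intertwining of Lemma~\ref{lem_PropertyOfDSbar}, and then invoke only the local nilpotence of $\bar\theta$ (which you correctly pull back from $\theta$ through the surjection $\mu\otimes\id$) to peel off $\bar\theta v=0$ by your maximality/invertibility argument. This is cleaner in that it reuses the exponential machinery already set up and never reopens the filtration, though it does not yield the sharper statement $(\bar{s}\bar{d})^2v=0$ that the paper's induction produces as a byproduct. Your separate verification that $\bar\theta\bar v=0$ (via $s^2=0$ killing $s(sd)^n$) is also a point the paper's write-up leaves implicit.
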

\begin{proof}
 Denote the restrictions of $\bar{d}$ and $\bar{s}$ by
 \[
  \bar{d}_k, \bar{s}_k\colon \fga{V(k)}\otimes\fga{\overline{V(k)}} \rightarrow \fga{V(k)}\otimes\fga{\overline{V(k)}}.
 \]
 We prove the following properties by induction on $k$:
 \begin{itemize}
  \item[$(1)_k$] $(\bar{s}_k\bar{d}_k)^2v = 0$ holds for any $v\in V(k)$, and
  \item[$(2)_k$] $\bar{d}_k\bar{s}_k + \bar{s}_k\bar{d}_k = 0$ holds.
 \end{itemize}
 These are obvious for $k = -1$.
 Assume $(1)_{k-1}$ and $(2)_{k-1}$.

 $(1)_k:$
 By $(2)_{k-1}$, we have
 \[
  (\bar{s}_k\bar{d}_k)^2v = \bar{s}_{k-1}\bar{d}_{k-1}\bar{s}_{k-1}dv = -\bar{s}_{k-1}\bar{s}_{k-1}\bar{d}_{k-1}dv = 0.
 \]

 $(2)_k:$
 Since $\bar{d}_k\bar{s}_k + \bar{s}_k\bar{d}_k$ is a derivation, it is sufficient to prove $(\bar{d}_k\bar{s}_k + \bar{s}_k\bar{d}_k)v = 0$ for $v \in V(k)$.
 By \protect \MakeUppercase {L}emma\nobreakspace \ref {lem_PropertyOfDSbar} and $(1)_k$, we have
\begin{eqnarray*}
 \bar{d}_k\bar{s}_k v
  &=& \bar{d}_k\bar{s}_k(\mu\otimes\id)\fst{v} = (\mu\otimes\id)d_ks_k\fst{v} \\
 &=& (\mu\otimes\id)\left( \snd{v} - \fst{v} - \sum_{n\geq1}\frac{1}{n!}(s_kd_k)^n\fst{v}\right) \\
 &=& -\sum_{n\geq1}\frac{1}{n!}(\bar{s}_k\bar{d}_k)^nv = -\bar{s}_k\bar{d}_kv.
\end{eqnarray*}
 This completes the induction and proves the proposition.
\end{proof}

\begin{rmk}
 Consider the case $\dim V < \infty$.
 Since the element $\displaystyle x' = \sum_{n\geq1}\frac{1}{n!}(sd)^n\fst{x}$ satisfies the condition (1) of \protect \MakeUppercase {P}roposition\nobreakspace \ref {prop_ConstructionOfRelSullModel},
 the relative Sullivan model constructed by \protect \MakeUppercase {C}orollary\nobreakspace \ref {cor_ConstructionOfRelSullModel}
 and that by \protect \MakeUppercase {T}heorem\nobreakspace \ref {thm_ExpRelSullModel} coincide with each other.
\end{rmk}

\begin{rmk}
 We can naturally extend the definition of semi-purity (\protect \MakeUppercase {D}efinition\nobreakspace \ref {defn_Semipure}) and niceness (\protect \MakeUppercase {D}efinition\nobreakspace \ref {defn_NiceRelSullModel}) to a Sullivan algebra $\fdga{V}$ with $\dim V = \infty$.
 Then, it is easy to prove that the relative Sullivan model constructed by \protect \MakeUppercase {T}heorem\nobreakspace \ref {thm_ExpRelSullModel} is nice.
 Hence, by the preceding remark,
 we can use this model as the Sullivan model used  in Section\nobreakspace \ref {sect_ConstructionOfShriek} to describe the shriek map $\shriek$.
\end{rmk}

\section{Appendix: Pure and semi-pure Sullivan algebras}
\label{sect_pure}

In this appendix, we prove \protect \MakeUppercase {T}heorem\nobreakspace \ref {thm_Semi-pureModel}.
On the other hand, we have \protect \MakeUppercase {P}roposition\nobreakspace \ref {prop_PureMinimal}, which shows that the similar statement to \protect \MakeUppercase {T}heorem\nobreakspace \ref {thm_Semi-pureModel} does not hold for a pure Sullivan algebra.

To prove \protect \MakeUppercase {T}heorem\nobreakspace \ref {thm_Semi-pureModel}, we recall the constructions $\Cbar_*$, $\Cbar^*$ and $\Lcobar$ for differential graded Lie algebras and cocommutative differential graded coalgebras.
See \cite[Part IV]{felix-halperin-thomas01} for details of these constructions.
We say a differential graded coalgebra $(C,d)$ is \textit{1-connected} if $C = \K \oplus C_{\geq2}$.
Similarly, a differential graded Lie algebra $(L,d)$ is \textit{connected} if $L = L_{\geq1}$.
Here, we use the homological grading for differential graded Lie algebras and cocommutative differential graded coalgebras and cohomological one for dga's.

Let $(L,d)$ be a connected differential graded Lie algebra.
Define the suspension $sL$ of $L$ by $(sL)_n = L_{n-1}$.
The \textit{Cartan-Eilenberg-Chevalley construction} on $(L,d)$ is the 1-connected cocommutative differential graded coalgebra $\Cbar_*(L,d)=\fdga{sL}$,
where the differential on $\Cbar_*(L,d)$ is defined by the differential on $L$ and the Lie bracket of $L$.
Note that the differential satisfies $d(\wedge^isL)\subset \wedge^isL\oplus\wedge^{i-1}sL$.
Next, define the dga $\Cbar^*(L,d)$ to be the dual $\hom_\K(\Cbar_*(L,d), \K)=((\wedge sL)^\sharp, d)$ of the above construction, where $(-)^\sharp$ denotes the dual.
If $L$ is of finite type, then we have $\Cbar^*(L,d) \cong (\wedge(sL)^\sharp, d)$ and $\Cbar^*(L,d)$ is a Sullivan algebra.
Note that the differential satisfies $d((sL)^\sharp)\subset \wedge^1(sL)^\sharp\oplus\wedge^2(sL)^\sharp$, by the corresponding property for $\Cbar_*(L,d)$.

Let $(C,d)$ be a 1-connected cocommutative differential graded coalgebra.
Define a graded $\K$-module $\overline{C}$ to be $C_{\geq2}$ and its desuspension $s^{-1}\overline{C}$ by $(s^{-1}\overline{C})_n = C_{n+1}$.
The \textit{Quillen construction} on $(C,d)$ is the connected differential graded Lie algebra $\Lcobar(C,d)=(\L_{s^{-1}\overline{C}}, d)$,
where $\L_{s^{-1}\overline{C}}$ is the free graded Lie algebra on $s^{-1}\overline{C}$,
and the differential on $\Lcobar(C,d)$ is defined by the differential on $C$ and the comultiplication of $C$.

We will use the following properties of these constructions:
\begin{enumerate}
 \renewcommand{\labelenumi}{(\alph{enumi})}
 \item There is a quasi-isomorphism $(C,d) \xrightarrow{\qis} \Cbar_*(\Lcobar(C,d))$ of cocommutative differential graded coalgebras.
 \item If $\varphi\colon (L,d)\xrightarrow{\qis}(L',d)$ is a quasi-isomorphism of connected differential graded Lie algebras,
       then the induced map $\Cbar_*{\varphi}\colon \Cbar_*(L,d)\xrightarrow{\qis}\Cbar_*(L',d)$ is a quasi-isomorphism of cocommutative differential graded coalgebras.
 \item Let $\fdga{V}$ be a minimal Sullivan algebra with $V$ finite type and $V=V^{\geq2}$.
       Define $(L,d) = \Lcobar(\fdga{V}^\sharp)$.
       Then there is a natural isomorphism $V \cong (sH(L,d))^\sharp$ of graded $\K$-modules.
\end{enumerate}

To prove \protect \MakeUppercase {T}heorem\nobreakspace \ref {thm_Semi-pureModel}, we need some propositions.
The first one treats the finiteness of generators, and is essentially the same as \cite[\S12 (a) Example6]{felix-halperin-thomas01}.
\begin{prop}
 \label{prop_FiniteLieModel}
 Let $(L,d)$ be a differential graded Lie algebra with $L=L_{\geq1}$.
 For a positive integer $n$, we assume that $H_i(L)=0$ for any $i > n$.
 Then there is a differential ideal $I\subset L$ satisfying the following properties:
 \begin{itemize}
  \item the projection map $(L,d) \xrightarrow{\qis} (L/I, d)$ is a quasi-isomorphism and
  \item $(L/I)_i=0$ holds for any $i>n$.
 \end{itemize}
\end{prop}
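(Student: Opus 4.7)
The plan is to exhibit an explicit candidate for $I$ and verify the two required properties by direct computation. Since we need $L/I$ to vanish above degree $n$, the ideal must contain all of $L_{>n}$. The subspace $L_{>n}$ is already a graded Lie ideal of $L$, because any bracket with a factor in degree $>n$ lands again in degree $>n$ (using $L=L_{\geq 1}$, so $i+j>n$ whenever $j>n$ and $i\geq 1$, and $L_0=0$). However $L_{>n}$ is not closed under $d$, since $d(L_{n+1})\subset L_n$. The minimal repair is to set
\[
 I := L_{>n} + d(L_{n+1}) \subset L,
\]
and this is the ideal I will take.

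The first step is to check that $I$ is a differential Lie ideal. For $[L,I]\subset I$ it suffices to show $[L_i, dL_{n+1}]\subset I$; but this lies in $L_{i+n}$, and since $i\geq 1$ we have $i+n>n$, so $[L_i,dL_{n+1}]\subset L_{>n}\subset I$. For $dI\subset I$ we split as $d(L_{>n+1})\subset L_{>n}\subset I$, and $d(L_{n+1})\subset I$ by construction, while $d(dL_{n+1})=0$ by $d^2=0$.

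The second step is to show that the projection $\pi\colon (L,d)\to (L/I,d)$ is a quasi-isomorphism. The graded pieces of $L/I$ are easy to read off: $(L/I)_i=L_i$ for $i<n$ (since $L_{>n}$ contributes nothing below degree $n+1$ and $dL_{n+1}\subset L_n$), $(L/I)_n=L_n/dL_{n+1}$, and $(L/I)_i=0$ for $i>n$. In degrees $i>n$ both $H_i(L)$ and $H_i(L/I)$ vanish, the former by hypothesis. For $i\leq n-2$ the map $\pi$ is the identity in degrees $i$ and $i+1$, so $H_i(\pi)$ is visibly an isomorphism. In degree $n-1$ the image of $\overline d\colon (L/I)_n\to (L/I)_{n-1}$ is $dL_n$, giving $H_{n-1}(L/I)=H_{n-1}(L)$. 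In degree $n$, since $(L/I)_{n+1}=0$, the cycles are exactly $Z_n(L)/dL_{n+1}=H_n(L)$.

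The only genuinely delicate point is the interaction between the two generators $L_{>n}$ and $dL_{n+1}$ of $I$: the former fails to be $d$-stable and the latter fails on its own to be a graded ideal, but once combined each defect is absorbed by the other, which is why the one-step correction suffices. Everything else is routine bookkeeping, so I do not anticipate any serious obstacle.
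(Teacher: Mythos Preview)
Your proposal is correct and takes essentially the same approach as the paper: the ideal $I=L_{>n}+d(L_{n+1})$ is exactly the one the paper defines (there written piecewise as $I_i=0$ for $i<n$, $I_n=(\im d)_n$, $I_j=L_j$ for $j>n$). The paper merely asserts the verification is ``easy to check'' after recording a splitting $L_n=H_n(L)\oplus(\im d)_n\oplus M_n$, while you carry out the degree-by-degree check directly; the content is the same.
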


\begin{proof}
 Since the coefficient $\K$ is a field,
 we can take a direct sum decomposition $L_n = H_n(L) \oplus (\im d)_n \oplus M_n$ as graded $\K$-modules
 such that $\ker d = H_n(L) \oplus (\im d)_n$ and $d|_{M_n}\colon M_n \xrightarrow{\cong} (\im d)_{n-1}$ is an isomorphism.
 Define a differential ideal $I$ by $I_i=0$ for $i<n$, $I_n=(\im d)_n$ and $I_j = L_j$ for $j>n$.
 Then it is easy to check the above properties.
\end{proof}

Using the constructions $\Cbar^*$ and $\Lcobar$ with the above proposition, we have the following one.

\begin{prop}
 \label{prop_QuadraticModel}
 Let $\fdga{V}$ be a Sullivan algebra satisfying $\dim V < \infty$ and $V^1 = 0$.
 Then there is a Sullivan algebra $\fdga{W}$ satisfying
 \begin{itemize}
  \item $\dim W < \infty$ and $W^1 = 0$,
  \item $dW \subset \wedge^{\leq2} W$, and
  \item $\fdga{W} \simeq \fdga{V}$.
 \end{itemize}
\end{prop}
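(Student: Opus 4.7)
The plan is to pass to the differential graded Lie algebra side, truncate in top degrees, and come back via the Cartan-Eilenberg-Chevalley construction, whose output is automatically quadratic on generators. First, by Corollary~\ref{cor_MinimalModel} I may replace $\fdga{V}$ with its minimal Sullivan model, so assume from now on that $\fdga{V}$ is minimal (the hypotheses $\dim V < \infty$, $V^1 = 0$ are preserved since the minimal part has generators contained in $V$). Dualising gives $C := \fga{V}^\sharp$, a $1$-connected cocommutative differential graded coalgebra, which is of finite type because $V = V^{\geq 2}$ is finite-dimensional and hence each $(\fga V)^n$ is finite-dimensional. Setting $L := \Lcobar(C)$ produces a connected differential graded Lie algebra which is also of finite type: $s^{-1}\overline{C}$ is positively graded of finite type, so each degree of the free Lie algebra $\L_{s^{-1}\overline{C}}$ is finite-dimensional.

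Next I would truncate $L$. Property~(c) identifies $V \cong (sH(L))^\sharp$, so $H(L)$ is finite-dimensional; choosing $N$ with $V^i = 0$ for $i > N$ then gives $H_i(L) = 0$ for $i > N-1$. Proposition~\ref{prop_FiniteLieModel} yields a differential ideal $I \subset L$ and a quasi-isomorphism $L \xrightarrow{\qis} L' := L/I$ with $(L')_i = 0$ for $i > N-1$. Since $L$ was degree-wise finite-dimensional and $L'$ is now degree-bounded, $L'$ is totally finite-dimensional, and it remains connected.

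Finally I set $\fdga{W} := \Cbar^*(L')$. Finite type of $L'$ makes this a Sullivan algebra with $W = (sL')^\sharp$, so $\dim W = \dim L' < \infty$, and $W^1 = ((L')_0)^\sharp = 0$ since $L'$ is connected. The Cartan-Eilenberg-Chevalley differential on a generator has only a linear piece (from $d_{L'}$) and a quadratic piece (from the Lie bracket), giving $dW \subset \wedge^{\leq 2} W$ as required. For $\fdga{W} \simeq \fdga{V}$, I combine property~(a) applied to $C$ with property~(b) applied to $L \xrightarrow{\qis} L'$ to obtain a chain of cocommutative-dgc quasi-isomorphisms $C \xrightarrow{\qis} \Cbar_*(L) \xrightarrow{\qis} \Cbar_*(L')$; since everything involved is of finite type, $\K$-linear dualisation produces a chain of cdga quasi-isomorphisms $\Cbar^*(L') \xrightarrow{\qis} \Cbar^*(L) \xrightarrow{\qis} \fga{V}$. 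The main delicacy is guaranteeing both finiteness properties of $W$ simultaneously: finite type propagates from $C$ through $\Lcobar$, while total finite-dimensionality additionally requires the degree bound from finiteness of $H(L)$ combined with Proposition~\ref{prop_FiniteLieModel}.
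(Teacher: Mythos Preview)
Your proof is correct and follows essentially the same approach as the paper: pass to the minimal model, apply $\Lcobar$ to the dual coalgebra, truncate the resulting dg Lie algebra via Proposition~\ref{prop_FiniteLieModel}, and return via $\Cbar^*$ using properties (a)--(c). You supply more detail than the paper on the finite-type bookkeeping (for $C$, $L$, and the dualisation step), but the strategy and the key inputs are identical.
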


\begin{proof}
 Taking the minimal model, we may assume $\fdga{V}$ is minimal.
 Define $(L,d) = \Lcobar(\fdga{V}^\sharp)$.
 Then $H(L)$ is finite dimensional by the property (c).
 Take a differential ideal $I\subset L$ by \protect \MakeUppercase {P}roposition\nobreakspace \ref {prop_FiniteLieModel}.
 Then, by the properties (a) and (b), we have
 $\Cbar^*(L/I,d) \xrightarrow{\qis} \Cbar^*(L,d) = \Cbar^*\Lcobar(\fdga{V}^\sharp) \xrightarrow{\qis}\fdga{V}$.
 Then $\fdga{W} = \Cbar^*(L/I)$ satisfies the desired properties.
\end{proof}

The key of the construction of a semi-pure model is the following proposition.
\begin{prop}
 \label{prop_ConstructSemi-pureModel}
 Let $\fdga{V}$ be a Sullivan algebra satisfying  $dV \subset \wedge^{\leq2} V$.
 Then there is a submodule $W$ of $V$ and a differential $\bar{d}$ on $\wedge W$ satisfying
 \begin{itemize}
  \item $(\wedge W, \bar{d})$ is semi-pure and 
  \item $(\wedge W, \bar{d})$ and $\fdga{V}$ are homotopy equivalent.
 \end{itemize}
\end{prop}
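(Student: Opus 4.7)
The plan is to realize a minimal Sullivan model of $\fdga{V}$ with its generating space sitting inside $V$, to use the hypothesis $dV \subset \wedge^{\leq 2} V$ to force this model to have differential at most quadratic, and then to conclude semi-purity from a parity computation.

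First, I would decompose $d = d_1 + d_2$, with $d_1\colon V \to V$ the linear part and $d_2\colon V \to \wedge^2 V$ the quadratic part. Since $d^2$ sends $V$ into $\wedge^{\leq 3} V$ and decomposes by wedge-length, each of $d_1^2$, $d_1 d_2 + d_2 d_1$, and $d_2^2$ vanishes separately; in particular $d_1^2 = 0$ and $(V, d_1)$ is a parity-preserving chain complex. I would then choose a splitting $V = H \oplus B \oplus M$ with $B = \im d_1$, $\ker d_1 = H \oplus B$, and $d_1|_M\colon M \xrightarrow{\cong} B$, and take $W = H$ as a graded subspace of $V$.

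Next, I would construct a differential $\bar{d}$ on $\wedge W$ together with a cdga quasi-isomorphism $\phi\colon (\wedge W, \bar{d}) \xrightarrow{\qis} \fdga{V}$ by induction along the Sullivan filtration of $V$, mimicking the construction behind \protect \MakeUppercase {T}heorem\nobreakspace \ref {thm_MinimalModel}. For each new generator $w \in W$, one picks $\phi(w) = w + \xi_w$ with $\xi_w \in M \oplus \wedge^{\geq 2} V$ chosen to cancel all $B$-linear contributions of $d\phi(w)$, and then defines $\bar{d} w$ as the unique element of $\wedge^{\geq 2} W$ satisfying $d\phi(w) = \phi(\bar{d} w)$. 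Minimality $\bar{d} W \subset \wedge^{\geq 2} W$ is then automatic. The key additional point, and the main technical obstacle, is to show that when $d$ is at most quadratic on $V$ the corrections $\xi_w$ may be chosen inductively from $M \oplus \wedge^2 V$, so that no cubic or higher-order terms are introduced in $\bar{d}$; this yields the sharper conclusion $\bar{d} W \subset \wedge^2 W$. Tracking wedge-lengths carefully through the inductive step — in particular verifying that $d\xi_w$ stays in $\wedge^{\leq 2} V$ when $\xi_w \in M \oplus \wedge^2 V$ — is where all the work lies.

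Finally, semi-purity follows by parity. For $w \in W^{\mathrm{even}}$, the element $\bar{d} w \in \wedge^2 W$ has odd degree $\deg{w}+1$, so every monomial $a \wedge b$ appearing in $\bar{d} w$ satisfies $\deg{a}+\deg{b}$ odd, forcing exactly one of $a, b$ to lie in $W^{\mathrm{even}}$ and the other in $W^{\mathrm{odd}}$. Hence $\bar{d} w \in W^{\mathrm{even}} \cdot W^{\mathrm{odd}} \subset I_W$, which is precisely semi-purity. The homotopy equivalence $(\wedge W, \bar{d}) \simeq \fdga{V}$ is then immediate from the quasi-isomorphism $\phi$, since quasi-isomorphisms between Sullivan algebras are homotopy equivalences.
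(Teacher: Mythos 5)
Your overall architecture (linear part, splitting of $V$, contraction of an acyclic piece, parity argument at the end) is the right shape, but the central claim of your second step is false, and the proposal collapses there. You propose to pass to the \emph{minimal} model $(\wedge W,\bar d)$ with $W=H(V,d_1)$ and to show that, because $d$ is at most quadratic on $V$, the corrections $\xi_w$ can be chosen so that $\bar d W\subset\wedge^2 W$. This cannot be done in general: the minimal model is unique up to isomorphism, and minimalizing a quadratic--linear Sullivan algebra typically creates terms of wedge-length $\geq 3$ (dually, this is homotopy transfer of a dg Lie algebra structure onto its homology, which produces nontrivial higher $L_\infty$-brackets). Concretely, take the minimal Sullivan algebra $(\wedge(y_1,y_2,y_3,w),d)$ with $\deg{y_i}=3$, $\deg{w}=8$, $dy_i=0$, $dw=y_1y_2y_3$. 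By Proposition\nobreakspace\ref{prop_QuadraticModel} it is quasi-isomorphic to some $\fdga{V}$ with $dV\subset\wedge^{\leq2}V$; applying your procedure to that $\fdga{V}$ must return this minimal algebra, whose differential on the even generator is cubic. Worse, this minimal algebra is not semi-pure ($y_1y_2y_3\notin I$, since no monomial contains an even generator), so the strategy ``take the minimal model and check semi-purity by parity'' fails outright, not just at the quadraticity step: a cubic monomial in three odd generators has odd total degree, so your parity argument, which is only valid on $\wedge^2 W$, does not apply to it.

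The paper avoids this by \emph{not} minimalizing. It contracts only the even-to-odd part of the linear differential: with $d_0$ the linear part, it sets $W^{\mathrm{even}}=\ker(d_0|_{V^{\mathrm{even}}})$, chooses $U$ with $V^{\mathrm{even}}=U\oplus W^{\mathrm{even}}$ and $V^{\mathrm{odd}}=d_0U\oplus W^{\mathrm{odd}}$, and divides out the contractible subalgebra $\wedge(U\oplus dU)$ via the algebra isomorphism $\wedge(W\oplus U\oplus dU)\cong\wedge V$. The point is that the surviving even generators are genuine elements of $V$ killed by $d_0$, so for $w\in W^{\mathrm{even}}$ one has $dw\in\wedge^2V$ of odd degree; each such quadratic monomial is a product of one even and one odd generator, hence lies in $I_V$, and the projection $\wedge V\to\wedge W$ (an algebra map sending $V^{\mathrm{even}}$ onto $W^{\mathrm{even}}$) carries $I_V$ into $I_W$. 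No constraint is needed, or true, for $\bar d$ on $W^{\mathrm{odd}}$, which is exactly why semi-purity (unlike purity or minimality) is achievable here; compare Proposition\nobreakspace\ref{prop_PureMinimal}. If you want to salvage your write-up, replace ``minimal model'' by this half-contraction and your parity computation for the even generators goes through verbatim at the level of $\wedge^2 V$ before projecting.
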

\begin{proof}
 Let $d_0\colon V \rightarrow V$ be the linear part of $d\colon \wedge V \rightarrow \wedge V$.
 Take direct sum decompositions $V^\mathrm{even}=U\oplus W^\mathrm{even}$ and $V^\mathrm{odd}=d_0U\oplus W^\mathrm{odd}$ such that $W^\mathrm{even}=\ker(d_0|_{V^\mathrm{even}})$ and $d_0\colon U \xrightarrow{\cong}d_0U$.
 Denote $W=W^\mathrm{even}\oplus W^\mathrm{odd}$.
 Then an inclusion
 $W\oplus U\oplus dU \rightarrow \wedge V$
 induces an isomorphism
 $\wedge(W\oplus U\oplus dU) \xrightarrow{\cong} \wedge V$
 of graded algebras \cite[Lemma 14.7]{felix-halperin-thomas01}.
 Define a differential on $\wedge(W\oplus U\oplus dU)$ by this isomorphism.
 Taking the quotient by the subalgebra $(\wedge(U\oplus dU),d)$, define
 $(\wedge W, \bar{d}) = (\wedge(W\oplus U\oplus dU), d) \otimes_{\wedge(U\oplus dU)} \K$.
 Since $H(\wedge(U\oplus dU))=\K$, we have $\fdga{V}\simeq (\wedge W, \bar{d})$.
 Moreover, $(\wedge W, \bar{d})$ is semi-pure by the definition of $W$.
 This completes the proof.
\end{proof}

%This proposition is motivated by the construction of the minimal Sullivan model in \cite[Theorem 14.9]{felix-halperin-thomas01}.
Note that, if we replace the decomposition by $V = U \oplus d_0 U \oplus W$ with $d_0 U\oplus W = \ker d_0$ and $d_0|_U\colon U \xrightarrow{\cong} d_0 U$, then the resulting $(\wedge W, \bar{d})$ is the minimal Sullivan model of $\fdga{V}$ \cite[Theorem 14.9]{felix-halperin-thomas01}.
This construction is the key of the above proposition.
Moreover, this construction shows that the difference $\dim V^\mathrm{odd}-\dim V^\mathrm{even}$ is homotopy invariant for Sullivan algebras $\fdga{V}$ with $\dim V < \infty$.

Now it is easy to prove \protect \MakeUppercase {T}heorem\nobreakspace \ref {thm_Semi-pureModel} by the above propositions.

\begin{proof}
 [Proof of \protect \MakeUppercase {T}heorem\nobreakspace \ref {thm_Semi-pureModel}]
 This follows immediate from \protect \MakeUppercase {P}roposition\nobreakspace \ref {prop_QuadraticModel} and \protect \MakeUppercase {P}roposition\nobreakspace \ref {prop_ConstructSemi-pureModel}.
\end{proof}

\begin{rmk}
 Note that \protect \MakeUppercase {T}heorem\nobreakspace \ref {thm_Semi-pureModel} remains true
 if we assume $V$ is of finite type instead of assuming $\dim V < \infty$ in the theorem and the definition of semi-purity (\protect \MakeUppercase {D}efinition\nobreakspace \ref {defn_Semipure}).
 The only modifications of the proof are to omit \protect \MakeUppercase {P}roposition\nobreakspace \ref {prop_FiniteLieModel} and to replace $\Cbar^*(L/I, d)$ with $\Cbar^*(L, d)$ in the proof of \protect \MakeUppercase {P}roposition\nobreakspace \ref {prop_QuadraticModel}.
\end{rmk}

On the other hand, we consider pure Sullivan algebras.
The following proposition shows that the similar statement to \protect \MakeUppercase {T}heorem\nobreakspace \ref {thm_Semi-pureModel} does not hold for a pure Sullivan algebra.

\begin{prop}
 \label{prop_PureMinimal}
 Let $\fdga{V}$ be a pure Sullivan algebra.
 Then there is a direct sum decomposition $V = W \oplus U$ and differentials $\bar{d}_W$ and $\bar{d}_U$ on $\wedge W$ and $\wedge U$, respectively, satisfying the following properties.
 \begin{itemize}
  \item The Sullivan algebra $(\wedge W, \bar{d}_W)$ is pure and minimal.
  \item The Sullivan algebra $(\wedge U, \bar{d}_U)$ is pure and satisfies $\cohom[+]{\wedge U, \bar{d}_U} = 0$.
  \item There is an isomorphism $\fdga{V} \cong (\wedge W, \bar{d}_W) \otimes (\wedge U, \bar{d}_U)$ of dga's.
 \end{itemize}
 In particular, there is a homotopy equivalence $\fdga{V} \simeq (\wedge W, \bar{d}_W)$ of Sullivan algebras.
\end{prop}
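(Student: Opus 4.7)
The plan is to build the decomposition by an appropriate linear-algebra choice of generators followed by a unipotent change of basis to clean up cross terms. Writing $d_0\colon V^\mathrm{odd}\to V^\mathrm{even}$ for the linear part of $d$ (well-defined by purity), I would set $W^\mathrm{odd}=\ker d_0$, pick a complement $U^\mathrm{odd}\subset V^\mathrm{odd}$, let $U^\mathrm{even}=d_0(U^\mathrm{odd})$, and pick a complement $W^\mathrm{even}\subset V^\mathrm{even}$. Setting $W=W^\mathrm{even}\oplus W^\mathrm{odd}$ and $U=U^\mathrm{odd}\oplus U^\mathrm{even}$ gives $V=W\oplus U$. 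On $\fga U$ I would take the Koszul-type derivation $\bar d_U u=d_0 u$ for $u\in U^\mathrm{odd}$ and $\bar d_U|_{U^\mathrm{even}}=0$; then $(\fga U,\bar d_U)$ is a tensor product of elementary acyclic pieces, hence pure and satisfying $\cohom[+]{\fga U,\bar d_U}=0$.

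For the first factor, the key observation is that since $du-d_0 u\in\wedge^{\ge2}V^\mathrm{even}$ for $u\in U^\mathrm{odd}$, the family $\{du:u\in U^\mathrm{odd}\}\cup W^\mathrm{even}$ is again a polynomial basis of $\fga{V^\mathrm{even}}$. For each $w\in W^\mathrm{odd}$ I would expand $dw$ in this new basis and split it as $dw=\alpha_w+\gamma_w$, where $\alpha_w\in\fga{W^\mathrm{even}}$ collects the monomials with no $du$-factor and $\gamma_w=\sum(du_i)\,g_i$ lies in the ideal generated by $\{du_i\}$. Setting $\bar d_W w=\alpha_w$ on $W^\mathrm{odd}$ and $\bar d_W|_{W^\mathrm{even}}=0$ defines the candidate differential. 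Purity, which forces $d$ to vanish on $\fga{V^\mathrm{even}}$, gives $dh_w=\sum(du_i)g_i=\gamma_w$ for $h_w=\sum u_i g_i$. Since $w\in\ker d_0$ implies $dw\in\wedge^{\ge2}V^\mathrm{even}$, this bound is preserved in the new basis and forces $\alpha_w\in\wedge^{\ge2}W^\mathrm{even}$ (so $(\fga W,\bar d_W)$ is minimal) and each $g_i$ to have positive polynomial degree, so that $h_w\in\wedge^{\ge2}V$.

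The dga isomorphism $\Phi\colon(\fga W,\bar d_W)\otimes(\fga U,\bar d_U)\to\fdga V$ is then the algebra homomorphism with $\Phi(u)=u$ for $u\in U^\mathrm{odd}$, $\Phi(u')=du$ for $u'=d_0 u\in U^\mathrm{even}$, $\Phi|_{W^\mathrm{even}}=\id$, and $\Phi(w)=w-h_w$ for $w\in W^\mathrm{odd}$. Unipotency of $\Phi-\id$ in the polynomial filtration shows $\Phi$ is a graded algebra isomorphism, and the chain map property reduces on generators to the immediate identities $d\Phi(u)=du=\Phi(\bar d_U u)$, $d\Phi(u')=d(du)=0$, and $d\Phi(w)=dw-dh_w=\alpha_w=\Phi(\bar d_W w)$ for $w\in W^\mathrm{odd}$.

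The main obstacle will be verifying $\bar d_W^2=0$, since the decomposition $dw=\alpha_w+\gamma_w$ is not obviously preserved under $d^2=0$. The cleanest route is to defer this check: once $\Phi$ is known to be an algebra isomorphism intertwining the derivations on generators, both $\Phi^{-1}d\Phi$ and $\bar d_W\otimes 1+1\otimes\bar d_U$ are derivations of the free algebra $\fga V$ agreeing on generators, hence agree everywhere, so $(\bar d_W\otimes 1+1\otimes\bar d_U)^2=\Phi^{-1}d^2\Phi=0$, and in particular $\bar d_W^2=0$. The final homotopy equivalence $\fdga V\simeq(\fga W,\bar d_W)$ is then immediate from acyclicity of the augmented cdga $(\fga U,\bar d_U)$.
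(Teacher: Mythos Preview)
Your argument is correct and is essentially the same idea as the paper's, carried out globally rather than inductively. The paper proves a lemma that splits off a single acyclic pair $(x_0,y_0)$ with $dy_0=x_0-a$: it replaces the even generator $x_0$ by $dy_0$ (your change of polynomial basis), defines the reduced differential by the substitution $x_0\mapsto a$ (your $\alpha_w$), and corrects each remaining odd generator $y_j$ by $-y_0 b_j$ where $dy_j-\eta(dy_j)=(x_0-a)b_j$ (your $-h_w$). It then iterates by induction on $\dim V$. Your version performs all of these substitutions simultaneously via one unipotent automorphism of $\fga V$. The inductive organization is a little more transparent at each step; your global organization avoids bookkeeping across the induction but requires the unipotency remark to see that $\Phi$ is an isomorphism.

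One simplification: the ``main obstacle'' you flag is not actually an obstacle. Since $(\fga W,\bar d_W)$ is pure by construction (you set $\bar d_W|_{W^{\mathrm{even}}}=0$ and $\bar d_W(W^{\mathrm{odd}})\subset\fga{W^{\mathrm{even}}}$), the identity $\bar d_W^2=0$ is automatic on generators and hence everywhere. Your transfer argument via $\Phi^{-1}d\Phi$ is valid but unnecessary.
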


\begin{proof}
 Recall that we assume $\dim V < \infty$ in the definition of pure Sullivan algebra (\protect \MakeUppercase {D}efinition\nobreakspace \ref {defn_Pure}).
 Hence the proposition is proved by induction on $\dim V$ by the following lemma.
\end{proof}

\begin{lem}
 Let $\fdga{V} = (\wedge(x_0, \ldots, x_p, y_0, \ldots, y_q), d)$ be a pure Sullivan algebra satisfying the following properties.
 \begin{itemize}
  \item The degree $\deg{x_i}$ is even, and $\deg{y_j}$ is odd for any $i,j$.
  \item $dy_0 = x_0 - a$ for some $a \in \wedge(x_1, \ldots, x_p)$.
 \end{itemize}
 Define a pure Sullivan algebra
 $(\wedge W, \bar{d}) = (\wedge(x_1, \ldots, x_p, y_1, \ldots, y_q), \bar{d})$
 by $\bar{d}x_i = 0$ and $\bar{d}y_j = \eta(dy_j)$,
 where
 $\eta\colon \wedge(x_0, \ldots, x_p) \rightarrow \wedge(x_1, \ldots, x_p)$
 is an algebra homomorphism defined by $\eta(x_0) = a$ and $\eta(x_i) = x_i$ for $1\leq i \leq p$.
 Also define a pure Sullivan algebra $(\wedge(x_0, y_0), \bar{d})$ by $\bar{d}x_0 = 0$ and $\bar{d}y_0 = x_0$.

 Then there is an isomorphism $\fdga{V} \cong (\wedge W, \bar{d}) \otimes (\wedge(x_0, y_0), \bar{d})$ of dga's.
\end{lem}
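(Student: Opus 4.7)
The plan is to build the isomorphism by a change of variables inside $\fdga{V}$, choosing new generators $\tilde{x}_0, \tilde{y}_1,\ldots,\tilde{y}_q$ so that the dga decouples as a tensor product.

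First, introduce the new even generator $\tilde{x}_0 = x_0 - a$; note that $\tilde{x}_0 = dy_0$ and $\wedge(x_0,\ldots,x_p) = \wedge(\tilde{x}_0, x_1,\ldots,x_p)$ as graded algebras. For each $j \geq 1$, use this second description to write $dy_j$ uniquely in the form
\[
 dy_j = \sum_{k\geq 0}\tilde{x}_0^{\,k}\,A_{j,k},\qquad A_{j,k} \in \wedge(x_1,\ldots,x_p).
\]
Setting $\tilde{x}_0=0$ corresponds to substituting $x_0 \mapsto a$, so $A_{j,0}=\eta(dy_j)$. Since $\fdga{V}$ is pure, each $A_{j,k}$ is a cocycle and $\tilde{x}_0$ is a cocycle, so the Leibniz rule (using that $\deg{y_0}$ is odd and $\tilde{x}_0$, $A_{j,k}$ are of even degree) gives
\[
 \tilde{x}_0^{\,k}A_{j,k} = d\bigl(y_0\,\tilde{x}_0^{\,k-1}A_{j,k}\bigr)\qquad\text{for }k\geq 1.
\]

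Next, define $\tilde{y}_j = y_j - y_0\sum_{k\geq 1}\tilde{x}_0^{\,k-1}A_{j,k}$. A direct computation then yields $d\tilde{y}_j = A_{j,0} = \eta(dy_j) \in \wedge(x_1,\ldots,x_p)$. Now consider the algebra map
\[
 \phi\colon (\wedge W,\bar d)\otimes(\wedge(x_0,y_0),\bar d) \longrightarrow \fdga{V}
\]
sending $x_0 \mapsto \tilde{x}_0$, $y_0 \mapsto y_0$, $x_i\mapsto x_i$ for $1\leq i\leq p$, and $y_j \mapsto \tilde{y}_j$ for $1\leq j\leq q$. The change of generators $x_0\rightsquigarrow\tilde{x}_0$, $y_j\rightsquigarrow\tilde{y}_j$ is triangular with respect to the filtration by degree in $(x_0,y_0)$, so $\phi$ is an isomorphism of graded algebras. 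Checking compatibility with the differentials reduces to the three formulas $d\tilde{x}_0=0$, $dy_0=\tilde{x}_0$, and $d\tilde{y}_j=\eta(dy_j)$, which match $\bar d$ on the source by construction.

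The only step that needs actual computation is the cocycle identity $\tilde{x}_0^{\,k}A_{j,k}=d(y_0\tilde{x}_0^{\,k-1}A_{j,k})$ together with its use to show $d\tilde{y}_j=\eta(dy_j)$; this is where purity (which forces $dA_{j,k}=0$ and $d\tilde{x}_0=0$) enters essentially. I expect this to be the main (and only) real obstacle, since once the $\tilde{y}_j$ are in place the triangularity of the substitution makes the algebra isomorphism automatic.
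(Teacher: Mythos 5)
Your proposal is correct and is essentially the paper's proof: your $\sum_{k\geq 1}\tilde{x}_0^{\,k-1}A_{j,k}$ is exactly the element $b_j$ that the paper obtains by factoring $dy_j - \eta(dy_j) = (x_0-a)b_j$ (root at $x_0=a$), and your new generators $\tilde{x}_0 = x_0-a$, $\tilde{y}_j = y_j - y_0 b_j$ are precisely the paper's change of variables $\varphi$, $\psi$. The only cosmetic difference is that you define one map and invoke triangularity, whereas the paper writes down both $\varphi$ and $\psi$ explicitly and checks they are mutually inverse dga homomorphisms.
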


\begin{proof}
 Considering $\eta(dy_j)$ as an element of $\wedge(x_0, \ldots, x_p)$ by the canonical inclusion of $\wedge(x_1, \ldots, x_p)$ into $\wedge(x_0, \ldots, x_p)$, a polynomial $dy_j - \eta(dy_j)$ in $x_0, \ldots, x_p$ has a root $x_0 = a$.
 Hence there is an element $b_j \in \wedge(x_0, \ldots, x_p)$ satisfying $dy_j - \eta(dy_j) = (x_0 - a)b_j$.
 Using this element, we define algebra homomorphisms
 \[
  \varphi\colon \fdga{V} \rightarrow (\wedge W, \bar{d}) \otimes (\wedge(x_0, y_0), \bar{d})
 \]
 and
 \[
  \psi\colon (\wedge W, \bar{d}) \otimes (\wedge(x_0, y_0), \bar{d}) \rightarrow \fdga{V}
 \]
 by
 $\varphi(x_0) = 1\otimes x_0 + a\otimes 1,\, \varphi(x_i) = x_i\otimes 1,\, \varphi(y_0) = 1\otimes y_0,\, \varphi(y_j) = y_j \otimes 1 + (1\otimes y_0)\varphi(b_j)$
 and
 $\psi(1\otimes x_0) = x_0 - a,\, \psi(x_i\otimes 1) = x_i,\, \psi(1\otimes y_0) = y_0,\, \psi(y_j\otimes 1) = y_j - y_0 b_j$
 for $1 \leq i \leq p$ and $1 \leq j \leq q$, respectively.
 Note that $\varphi$ is well-defined, since $b_j$ is an element of $\wedge(x_0, \ldots, x_p)$.
 Then, it is easy to prove that $\varphi$ and $\psi$ are dga homomorphisms satisfying $\psi \varphi = \id_{\wedge V}$ and $\varphi \psi = \id_{\wedge W \otimes \wedge(x_0, y_0)}$.
 This proves the lemma.
\end{proof}

\begin{rmk}
 Even if we drop the assumption $\dim V < \infty$ in the definition of a pure Sullivan algebra,
 \protect \MakeUppercase {P}roposition\nobreakspace \ref {prop_PureMinimal} remains true.
 This is proved by a method similar to that of \protect \MakeUppercase {P}roposition\nobreakspace \ref {prop_PureMinimal}, using the finiteness of the degrees of the elements appearing in each $dy_j$ instead of the finiteness of $\dim V$.
\end{rmk}

\section*{Acknowledgment}
I would like to express my gratitude to Katsuhiko Kuribayashi and Takahito Naito for productive discussions and valuable suggestions.
Furthermore, I would like to thank my supervisor Nariya Kawazumi for the enormous support and comments.
This work was supported by JSPS KAKENHI Grant Number 16J06349 and the Program for Leading Graduate School, MEXT, Japan.

\bibliographystyle{alphaabbrv} %plain, alpha, habbrv
%\bibliography{references}

\end{document}